\let\pmg@casefont\textrm
    \DeclareMathAlphabet{\mathbrush}{T1}{pbsi}{xl}{n}  
\declaretheorem[numberwithin=section,name=Theorem,refname={theorem,theorems},
Refname={Theorem,Theorems}]{theorem}
\declaretheorem[sibling=theorem,name=Lemma,refname={lemma,lemmas},
Refname={Lemma,Lemmas}]{lemma}
\declaretheorem[sibling=theorem,name=Proposition,refname={proposition,propositions},
Refname={Proposition,Propositions}]{proposition}
\declaretheorem[sibling=theorem,name=Definition,refname={definition,definitions},
Refname={Definition,Definitions},style=definition]{definition}
\declaretheorem[sibling=theorem,name=Notation,refname={notation,notations},
Refname={Notation,Notations},style=definition]{notation}
\declaretheorem[sibling=theorem,name=Hypothesis,refname={hypothesis,hypotheses},
Refname={Hypothesis,Hypotheses},style=definition]{hypo}
\declaretheorem[name=Property,refname={property,properties},
Refname={Property,Properties},style=definition]{property}
\declaretheorem[name=Procedure,refname={procedure,procedures},
Refname={Procedure,Procedures},style=definition]{procedure}
\declaretheorem[name=Condition,refname={condition,conditions},
Refname={Condition,Conditions},style=definition]{condition}
\newlist{propenum}{enumerate}{1}
\setlist[propenum]{label=\alph*), ref=\theproperty\alph*}
\declaretheorem[numberwithin=section,name=Rule,refname={rule,rules}, Refname={Rule,Rules},style=definition]{crule} 
\newlist{ruleenum}{enumerate}{1}
\setlist[ruleenum]{label=\alph*), ref=\thecrule\alph*}
\declaretheorem[sibling=theorem,name=Remark,refname={remark,remarks},
Refname={Remark,Remarks},style=remark]{remark}
    \newlist{pfcasesnonum}{enumerate}{3}
    \setlist[pfcasesnonum]{
      label={\pmg@casefont{Case}},
      align=left,
      left=0pt .. 1.5em,
      itemindent=* 
    }
    \setlist[pfcasesnonum,1]{
        before=\def\pfcasecounter@pmg{pfcasesnonumi},
    }
    \setlist[pfcasesnonum,2]{
        before=\def\pfcasecounter@pmg{pfcasesnonumii},
    }
    \setlist[pfcasesnonum,3]{
        before=\def\pfcasecounter@pmg{pfcasesnonumiii},
    }
    \newlist{pfcasesnumbered}{enumerate}{3}
    \setlist[pfcasesnumbered]{
      align=left,
      left=0pt .. 1.5em,
      itemindent=* 
    }
    \setlist[pfcasesnumbered,1]{
        before=\def\pfcasecounter@pmg{pfcasesnumberedi},
      label={{\pmg@casefont{Case}}~\arabic*},
      ref={\arabic*},
    }
    \setlist[pfcasesnumbered,2]{
        before=\def\pfcasecounter@pmg{pfcasesnumberedii},
      label={{\pmg@casefont{Case}}~\arabic{pfcasesnumberedi}\alph*},
      ref={\arabic{pfcasesnumberedi}\alph*},
    }
    \setlist[pfcasesnumbered,3]{
        before=\def\pfcasecounter@pmg{pfcasesnumberediii},
      label={{\pmg@casefont{Case}}~\arabic{pfcasesnumberedi}\alph{pfcasesnumberedii}.\roman*},
      ref={\arabic{pfcasesnumberedi}\alph{pfcasesnumberedii}.\roman*},
    }
    \def\@recthy@cases@labelindent{0pt}
    \newenvironment{pfcases*}{
        \def\@recthy@cases@labelindent{-1em}
        \ProvideDocumentCommand{\case}{r[]}{
                \def\thiscase{~##1}%
            \item~##1\textbf{:} %
            \ltx@ifpackageloaded{cleveref}{%
            \cref@constructprefix{pfcases}{\cref@result}%
            \protected@xdef\cref@currentlabel{[\pfcasecounter@pmg][][\cref@result]##1}}{}  \protected@edef\@currentlabel{##1}\ignorespaces}
    \begin{pfcasesnonum}\def\@recthy@cases@labelindent{1em}\ignorespaces}{\end{pfcasesnonum}\ignorespacesafterend}
  \newlist{pmg@steps}{enumerate}{4}
  \setlist[pmg@steps]{
      align=left,
      left=0pt .. 1.5em,
      itemindent=*
    }
  \setlist[pmg@steps,1]{label={Step \arabic*:}, ref={\arabic*}}
  \setlist[pmg@steps,2]{label={Step \arabic{pmg@stepsi}\alph*:}, ref={\arabic{pmg@stepsi}\alph*}}
  \setlist[pmg@steps,3]{label={Step \arabic{pmg@stepsi}\alph{pmg@stepsii}.\Roman*:}, ref={\arabic{pmg@stepsi}\alph{pmg@stepsii}.\Roman*}}
  \setlist[pmg@steps,4]{label={Step \arabic{pmg@stepsi}\alph{pmg@stepsii}.\Roman{pmg@stepsiii}\Alph*:}, ref={\arabic{pmg@stepsi}\alph{pmg@stepsii}.\Roman{pmg@stepsiii}\Alph*}}
  \crefname{pmg@stepsi}{step}{steps}
  \Crefname{pmg@stepsi}{Step}{Steps}
  \crefname{pmg@stepsii}{step}{steps}
  \Crefname{pmg@stepsii}{Step}{Steps}
  \crefname{pmg@stepsiii}{step}{steps}
  \Crefname{pmg@stepsiii}{Step}{Steps}
  \crefname{pmg@stepsiiii}{step}{steps}
  \Crefname{pmg@stepsiiii}{Step}{Steps}
  \NewDocumentEnvironment{steps}{o}{
      \begingroup
      \ProvideDocumentCommand{\step}{o}{\IfValueTF{##1}{\item\textbf{##1}:}{\item}}
      \IfValueTF{#1}{\begin{pmg@steps}[#1]}{\begin{pmg@steps}}%
  }{%
      \end{pmg@steps}%
      \endgroup\ignorespacesafterend%
      }
    \DeclarePairedDelimiter\@recthy@abs{\lvert}{\rvert}
    \let\@recthy@modulescr\mathcal
    \NewDocumentCommand{\PriorityTreeModule}{mO{}m}{\ensuremath{{\@recthy@modulescr{#1}}^{#2}_{#3} }}
    \let\module=\PriorityTreeModule
    \providecommand*{\iffdef}{\stackrel{\text{\tiny def}}{\iff}}
    \newcommand*{\card}[1]{\lvert#1\rvert}
    \DeclareMathOperator{\dom}{dom}
      \NewDocumentCommand \setcol {m m}
      {
        \bool_lazy_and:nnTF
          { \int_compare_p:nNn { \tl_count:n {#1} } = 3 }
          { \tl_if_head_eq_meaning_p:nN {#1} \setcol }
          {
            \pgerdes_setcol_special:nnnn #1 {#2}
          }
          {
            \pgerdes_setcol_normal:nn {#1} {#2}
          }
      }
    \providecommand*{\lh}[2][]{\@recthy@abs{#2}_{#1}}
    \providecommand*{\nin}{\notin}
    \providecommand*{\@pmg@saveeqstate}{\let\@origif@fleqn@pmg=\if@fleqn \let\@origiftagsleft@pmg=\iftagsleft@}
    \providecommand*{\@pmg@restoreeqstate}{ \let\if@fleqn=\@origif@fleqn@pmg\let\iftagsleft@=\@origiftagsleft@pmg}
    \providecommand*{\req@orig}[3]{\ensuremath{\@ifmtarg{#1}{\mathscr{#2}_{#3}}{\mathscr{#2}_{#3}^{#1}}}} 
    \NewDocumentCommand{\req@nohref}{sO{}mO{}m}{\req@orig{#2#4}{#3}{#5}}
    \NewDocumentCommand{\req@href}{O{}mO{}m}{\hyperref[\detokenize{req:#2@#1#3}]{\req@orig{#1#3}{#2}{#4}}}
   \NewDocumentCommand\req@href@star{s}{\IfBooleanTF#1{\req@nohref}{\req@href}}
   \newcommand{\req}{\req@nohref}
    \newenvironment{requirement}[1]{
        \@pmg@saveeqstate
        \@fleqntrue
        \setlength\@mathmargin{1.5cm}
        \tagsleft@true
        \usetagform{colon}
        \begin{equation}\tag{#1}
    }{
        \end{equation}
        \@pmg@restoreeqstate
        \usetagform{default}
    }
    \NewDocumentEnvironment{require}{O{}mO{}m}{
        \begin{requirement}{\req@nohref[#1#3]{#2}{#4}} \label{\detokenize{req:#2@#1#3}}%
    }{
        \end{requirement}%
    }
    \NewDocumentEnvironment{require*}{O{}mO{}m}{
        \begin{requirement}{\req@nohref[#1#3]{#2}{#4}}
    }{
        \end{requirement}%
    }
    \NewDocumentCommand{\refreq}{O{}mO{}}{\ref{\detokenize{req:#2@#1#3}}}
    \NewDocumentCommand{\require@nested}{sO{}mO{}m}{%
        \tag{\req@nohref[#2#4]{#3}{#5}}\IfBooleanTF{#1}{}{\label{\detokenize{req:#3@#2#4}}}%
    }
    \NewDocumentEnvironment{requirements}{}{%
        \@pmg@saveeqstate%
        \@fleqntrue%
        \setlength\@mathmargin{1.5cm}%
        \tagsleft@true%
        \usetagform{colon}%
        \let\require=\require@nested%
        \gather
        }{\endgather%
        \@pmg@restoreeqstate%
        \usetagform{default}%
        \ignorespacesafterend}
      \let\save@mathaccent\mathaccent
  \newcommand*\if@single[3]{%
    \setbox0\hbox{${\mathaccent"0362{#1}}^H$}%
    \setbox2\hbox{${\mathaccent"0362{\kern0pt#1}}^H$}%
    \ifdim\ht0=\ht2 #3\else #2\fi
    }
  \newcommand*\rel@kern[1]{\kern#1\dimexpr\macc@kerna}
  \providecommand*\overbar{\relax}
  \renewcommand*\overbar[1]{\@ifnextchar^{{\over@bar{#1}{0}}}{\over@bar{#1}{1}}}
  \newcommand*\over@bar[2]{\if@single{#1}{\over@bar@{#1}{#2}{1}}{\over@bar@{#1}{#2}{2}}}
  \newcommand*\over@bar@[3]{%
    \begingroup
    \def\mathaccent##1##2{%
      \let\mathaccent\save@mathaccent
      \if#32 \let\macc@nucleus\first@char \fi
      \setbox\z@\hbox{$\macc@style{\macc@nucleus}_{}$}%
      \setbox\tw@\hbox{$\macc@style{\macc@nucleus}{}_{}$}%
      \dimen@\wd\tw@
      \advance\dimen@-\wd\z@
      \divide\dimen@ 3
      \@tempdima\wd\tw@
      \advance\@tempdima-\scriptspace
      \divide\@tempdima 10
      \advance\dimen@-\@tempdima
      \ifdim\dimen@>\z@ \dimen@0pt\fi
      \rel@kern{0.6}\kern-\dimen@
      \if#31
        \overline{\rel@kern{-0.6}\kern\dimen@\macc@nucleus\rel@kern{0.4}\kern\dimen@}%
        \advance\dimen@0.4\dimexpr\macc@kerna
        \let\final@kern#2%
        \ifdim\dimen@<\z@ \let\final@kern1\fi
        \if\final@kern1 \kern-\dimen@\fi
      \else
        \overline{\rel@kern{-0.6}\kern\dimen@#1}%
      \fi
    }%
    \macc@depth\@ne
    \let\math@bgroup\@empty \let\math@egroup\macc@set@skewchar
    \mathsurround\z@ \frozen@everymath{\mathgroup\macc@group\relax}%
    \macc@set@skewchar\relax
    \let\mathaccentV\macc@nested@a
    \if#31
      \macc@nested@a\relax111{#1}%
    \else
      \def\gobble@till@marker##1\endmarker{}%
      \futurelet\first@char\gobble@till@marker#1\endmarker
      \ifcat\noexpand\first@char A\else
        \def\first@char{}%
      \fi
      \macc@nested@a\relax111{\first@char}%
    \fi
    \endgroup
  }
      \let\exists@orig@recthy=\exists
\let\forall@orig@recthy=\forall
\providecommand*{\existsinf}{\exists^{\infty}}
\def\existsinf(#1){\left(\existsinf #1\right)\!}
\def\existsinf[#1]{\left[\existsinf #1\right]\!}
\def\exists(#1){\left(\exists #1 \right)\!}
\def\forall(#1){\left(\forall #1 \right)\!}
    \providecommand*{\union}{\mathbin{\cup}}
    \ProvideDocumentCommand{\conv}{O{}}{\mathpunct{\downarrow}_{#1}}
    \providecommand*{\sigman}[1]{\Sigma_{#1}}
\def\sigman[#1]#2{\Sigma^{#1}_{#2}}
    \providecommand*{\sigmaZeroN}[1]{\Sigma^{0}_{#1}}
\def\sigmaZeroN[#1]#2{\Sigma^{0,#1}_{#2}}
    \providecommand*{\sigmaZeroOne}{\sigmaZeroN{1}}
\def\sigmaZeroOne[#1]{\sigmaZeroN[#1]{1}}
    \providecommand*{\sigmazi}{\sigmaZeroOne}
    \providecommand*{\eset}{\emptyset}
    \ProvideDocumentCommand{\REAop}{st+d()od()mm}{%
      {\IfBooleanTF{#1}%
        {\widehat{\mathcal{J}}}%
        {\mathcal{J}}%
      }^{%
          \IfBooleanTF{#2}{\vphantom{x}^\dagger}{}%
          #7}_{#6\IfValueTF{#4}{, #4}{}}%
          \IfValueTF{#3}{\left(#3\right)}{\IfValueTF{#5}{\left(#5\right)}{}}%
    }
   \def\kleeneOSYM{\mathcal{O}}
   \ProvideDocumentCommand{\kleeneO}{sD(){}oD(){}}{{
       \kleeneOSYM^{#2#4}_{%
            \IfNoValueTF{#1}{%
                \IfNoValueTF{#3}{}{%
                    \abs{#3}}%
                }%
                {%
                    1%
                    \IfNoValueTF{#3}{}{%
                        , \abs{#3}
                    }%
                }%
        }%
    }}
    \ProvideDocumentCommand{\subfun}{o}{\prec\IfNoValueTF{#1}{}{_{#1}}}
    \ProvideDocumentCommand{\supfun}{o}{\succ\IfNoValueTF{#1}{}{_{#1}}}
    \ProvideDocumentCommand{\nsubfun}{o}{\nprec\IfNoValueTF{#1}{}{_{#1}}}
    \ProvideDocumentCommand{\nsupfun}{o}{\nsucc\IfNoValueTF{#1}{}{_{#1}}}
    \ProvideDocumentCommand{\subfuneq}{o}{\preceq\IfNoValueTF{#1}{}{_{#1}}}
    \ProvideDocumentCommand{\subfunneq}{o}{\@precneq\IfNoValueTF{#1}{}{_{#1}}}
    \ProvideDocumentCommand{\supfuneq}{o}{\succeq\IfNoValueTF{#1}{}{_{#1}}}
    \ProvideDocumentCommand{\supfunneq}{o}{\@succneq\IfNoValueTF{#1}{}{_{#1}}}
    \ProvideDocumentCommand{\nsubfuneq}{o}{\npreceq\IfNoValueTF{#1}{}{_{#1}}}
    \ProvideDocumentCommand{\nsupfuneq}{o}{\nsucceq\IfNoValueTF{#1}{}{_{#1}}}
    \providecommand*{\diverge}{\mathpunct{\uparrow}}
    \ProvideDocumentCommand{\set}{m!G{}}{{\let\st=\mid\left\{#1 \ifthenelse{\isempty{#2}}{}{\mid #2} \right\}}}
    \providecommand*{\eqdef}{\overset{\text{\tiny def}}{=}} 
\providecommand*{\@recthy@llangle}{\langle\!\langle}
\providecommand*{\@recthy@rrangle}{\rangle\!\rangle}
\def\@recthy@langle{\langle}
\def\@recthy@rangle{\rangle}
\let\@recthy@lstrdelim=\@recthy@llangle
\let\@recthy@rstrdelim=\@recthy@rrangle
\let\@recthy@lcodedelim=\@recthy@llangle
\let\@recthy@rcodedelim=\@recthy@rrangle
\let\@recthy@lpairdelim=\@recthy@langle
\let\@recthy@rpairdelim=\@recthy@rangle
  \providecommand*{\pair}[2]{\mathopen{\@recthy@lpairdelim} #1, #2 \mathclose{\@recthy@rpairdelim}}
  \providecommand*{\str}[1]{\mathopen{\@recthy@lstrdelim}#1\mathclose{\@recthy@rstrdelim}}
  \providecommand*{\code}[1]{\mathopen{\@recthy@lcodedelim}#1\mathclose{\@recthy@rcodedelim}}
    \ProvideDocumentCommand{\incompat}{o}{\mathrel{\mid}\IfValueTF{#1}{_{#1}}{}}
    \ProvideDocumentCommand{\compat}{o}{\mathrel{\rlap{\ensuremath{\not}}\ensuremath{\mid}}\IfValueTF{#1}{_{#1}}{}}
    \def\symbf#1{\mathbf{#1}}
    \mathchardef\@recthy@mhyphen="2D 
  \def\@recthy@mhyphen{\mathhyphen}
\ProvideDocumentCommand{\REset}{D(){}oD(){#1}mO{#2}}{{W_{#4\IfValueTF{#5}{, #5}{}}^{#3}}}
\ProvideDocumentCommand{\REA}{d()o}{\ensuremath{\IfValueTF{#2}{#2 \@recthy@mhyphen}{}\text{REA}\IfValueTF{#1}{(#1)}{}}}
\providecommand*{\@recthy@ensuretext}[1]{\ensuremath{\text{#1}}}
\providecommand*{\re}{\@recthy@ensuretext{r.e.\ }}
\def\@recthy@recfnlSYM{\Phi}
\ProvideDocumentCommand{\recfnl@imp}{oD(){}m!G{#2}!g!d()}{{
      \def\temp@pmg@arg{}
      \IfValueTF{#5}{\ifthenelse{\isempty{#5}}{}{\def\temp@pmg@arg{; #5}}}{\IfValueTF{#6}{\ifthenelse{\isempty{#6}}{}{\def\temp@pmg@arg{; #6}}}}
        \@recthy@recfnlSYM_{#3\IfValueT{#1}{, #1}}\ifthenelse{\isempty{#4}}{}{\!\left(#4\temp@pmg@arg \right)}
    }}
\let\recfnl=\recfnl@imp
\providecommand*{\Tsetjoin}{\mathbin{\oplus}}
\providecommand*{\TsetJoin}{\mathop{\bigoplus}}
\providecommand*{\Tplus}{\Tsetjoin}
\providecommand*{\TPlus}{\TsetJoin}
\providecommand*{\restr}[1]{\mathpunct{\restriction_{#1}}}
    \providecommand*{\@recthy@TSYM}{\symbf{T}}
    \providecommand*{\Tequiv}{\mathrel{\equiv_{\@recthy@TSYM}}}
    \providecommand*{\nTequiv}{\mathrel{\ncong_{\@recthy@TSYM}}}
    \providecommand*{\Tlneq}{\lneq_{\@recthy@TSYM}}
    \providecommand*{\Tleq}{\leq_{\@recthy@TSYM}}
    \providecommand*{\Tgneq}{\gneq_{\@recthy@TSYM}}
    \providecommand*{\Tgeq}{\geq_{\@recthy@TSYM}}
    \providecommand*{\Tgtr}{>_{\@recthy@TSYM}}
    \providecommand*{\Tless}{<_{\@recthy@TSYM}}
    \providecommand*{\nTleq}{\nleq_{\@recthy@TSYM}}
    \providecommand*{\nTgeq}{\ngeq_{\@recthy@TSYM}}
    \providecommand*{\Tzerosym}{\symbf{0}}
    \providecommand*{\Tzero}{{\Tzerosym}}
    \def\@recthy@useSYM@default{\ltx@ifpackageloaded{unicode-math}{\symbffrak{u}}{\mathfrak{u}}}
\let\@recthy@useSYM=\@recthy@useSYM@default
\providecommand*{\use}[1]{\mathop{\@recthy@useSYM}\left[#1\right]}
\providecommand*{\entersat}[1]{\mathbin{\searrow_{#1}}}
\providecommand*{\@recthy@setovercmp}[1]{\overline{#1}}
\providecommand*{\@recthy@setsimcmp}[1]{\backsim #1}
\let\setcmp=\@recthy@setovercmp
\NewDocumentCommand{\axset}{D<>{}O{}D<>{#1}d()}{\mathcal{A}^{#3}_{#2}\IfValueTF{#4}{\left(#4\right)}{}} 
     \newcommand{\laxiom}[2]{ \mathopen{\langle}#1 \rightarrow #2 \mathclose{\rangle} }
\NewDocumentCommand{\Cuse}{O{}mm}{\mathop{\overbar{\@recthy@useSYM}}^{#1}\left(#2,#3\right)}
\newcommand{\bpfuncs}{{\lbrace 0,1, \diverge \rbrace}^{< \omega}}
\DeclareMathOperator{\supp}{supp}
\let\defword=\textbf
\NewDocumentCommand{\codeY}{O{}D<>{}O{}m}{\left[{#4}\right]^{#1#3}_{#2}} 
\def\Tplus+{\overbar{\Tplus}}
\def\TPlus+{\overbar{\TPlus}}
\newcommand*{\colrestr}[1]{\mathpunct{\restriction_{\left[#1\right]}}} 
\def\restr*{\colrestr}
\NewDocumentCommand{\Gfunc}{ommd()}{\Gamma_{\IfValueTF{#1}{#2, #1}{#2}}\ifthenelse{\isempty{#3}}{}{\left(#3\IfValueTF{#4}{; #4}{}\right)}} 
\NewDocumentCommand{\Tfunc}{O{}md()}{\Theta_{#1}\ifthenelse{\isempty{#2}}{}{\left(#2\IfValueTF{#3}{; #3}{}\right)}}
\NewDocumentCommand{\SelfComp}{D<>{}O{}mm}{\Psi^{#1}_{#2}(#3, #4)}
\NewDocumentCommand{\bY}{D<>{}O{}m}{\ensuremath{\mathbrush{b}^{#1}_{#3\IfValueTF{#2}{, #2}{}}}}
\NewDocumentCommand{\Yagree}{D<>{}}{\mathbin{\approx^{#1}}}
        \def\@IfColorTF#1\relax#2#3{\@ifundefinedcolor{#1}{#2}{#3}}
    \def\@@IfColorTF#1#2\relax#3#4{\uppercase{\@IfColorTF#1}#2\relax{#3}{#4}}
    \newcommand{\IfColorTF}[3]{\@@IfColorTF#1\relax{#2}{#3}}
    \NewDocumentCommand{\StrBehindLast}{smmo}{
        \IfBooleanTF {#1}{\IfNoValueTF{#4}{\StrBehind*[\StrCount*{#2}{#3}]{#2}{#3}[#4]}{\StrBehind*[\StrCount*{#2}{#3}]{#2}{#3}}}
        {\IfNoValueTF{#4}{\StrBehind[\StrCount{#2}{#3}]{#2}{#3}[#4]}{\StrBehind[\StrCount{#2}{#3}]{#2}{#3}}}
    }
    \newcommand{\matrixcontent@pmg}{}
    \newcommand{\REAmatrix@lastname}{}
    \newcommand{\row@pmg}[1]{%
      \foreach \i [count=\ci] in {1,...,#1} {%
        \begingroup\edef\x{\endgroup
           \noexpand\gappto\noexpand\matrixcontent@pmg{\noexpand\phantom{1} 
            \ifnum\ci=#1%
            \else%
                \&%
            \fi%
           }}\x
        }%
      \gappto\matrixcontent@pmg{\\}%
    }
    \newcommand{\REAmatrixContents}[2]{
     \foreach \j in {1,...,#1} {%
        \row@pmg{#2}
    }}
    \NewDocumentCommand{\REAmatrix}
        {D(){m}d<>O{nodes in empty cells=true}mm}{
            \let\matrixcontent@pmg\empty
            \REAmatrixContents{#4}{#5}
            \expandafter\def\csname #1-height\endcsname{#4}
            \expandafter\def\csname #1-width\endcsname{#5}
            \def\REAmatrix@lastname{#1}      
            \IfValueTF{#2}{\matrix (#1) [matrix of math nodes,nodes={draw, thin, outer sep=0pt},row sep=-\pgflinewidth,column sep=-\pgflinewidth,#3,label={above:#2},nodes in empty cells=true,ampersand replacement=\&] {
            \matrixcontent@pmg
            };}{\matrix (#1) [matrix of math nodes,nodes={draw, thin, outer sep=0pt},row sep=-\pgflinewidth,column sep=-\pgflinewidth,#3,nodes in empty cells=true,ampersand replacement=\&] {
            \matrixcontent@pmg
            };}
    }
    \NewDocumentCommand{\REAmatrixFillRect}{D<>{pattern=crosshatch}d()mm}{
        \IfValueTF{#2}{
            \edef\temp@pmg{%
                    \noexpand\draw[#1] (#2-#3.north west) rectangle (#2-#4.south east);}%
        }{%
            \edef\temp@pmg{%
                    \noexpand\draw[#1] (\REAmatrix@lastname-#3.north west) rectangle (\REAmatrix@lastname-#4.south east);}%
        }%
         \begin{scope}[on background layer]
            \temp@pmg
        \end{scope}
    }
    \NewDocumentCommand{\GetREAmatrixHeight}{D(){\REAmatrix@lastname}}{
        \edef\temp@pmg@name{#1}
        \expandafter\csname \temp@pmg@name-height\endcsname
    }
    \NewDocumentCommand{\pmg@draw@node}{D<>{black}mmm} {\node[#1] (#4) at (#2) {#3};} 
    \NewDocumentCommand{\pmg@circle@node}{D<>{black}mm} {\node[circle,draw, minimum size=2.2mm,#1] (#3) at (#2) {};} 
    \NewDocumentCommand{\REAmatrixEnum}{D(){val-#3}O{1}m !d()}{
        \foreach \node@pmg in {#3} {
            \IfValueTF{#4}{\pmg@draw@node{#4-\node@pmg}{#2}{#1}}{\expandafter\pmg@draw@node{\REAmatrix@lastname-\node@pmg}{#2}{#1}}
        }
    }
    \NewDocumentCommand{\REAmatrixCircle}{D(){circ-#3}m !D(){\REAmatrix@lastname}}{
        \foreach \node@pmg in {#2} {
            \pmg@circle@node{#3-\node@pmg}{#1}
        }
    }
    \NewDocumentCommand{\LineLabelNode}{D(){label-#5}D<>{black}O{2ex}O{right}mm}{
            \node[#4 = #3 of #5] (#1) {#6};
            \draw[color=#2,align=center] (#5) -- (#1);
    }
    \NewDocumentCommand{\LineLabelCell}{D(){label-#7-#5}D<>{black}O{right}O{2ex}mm !D(){\REAmatrix@lastname}}{
           \edef\temp@pmg{\noexpand\node[circle,draw,minimum size=2.2mm,transparent] (circ-#7-#5) at (#7-#5) {};
            \noexpand\node[#3 = #4 of circ-#7-#5] (#1) {#6};
            \noexpand\draw[color=#2] (circ-#7-#5) -- (#1);}
            \temp@pmg
    }
    \NewDocumentCommand{\CircleLabelNode}{D(){label-#7-#5}D<>{black}O{right}O{2ex}mm !D(){\REAmatrix@lastname}}{
           \edef\temp@pmg{\noexpand\node[circle,draw,minimum size=2.2mm,#2] (circ-#7-#5) at (#7-#5) {};
            \noexpand\node[#3 = #4 of circ-#7-#5] (#1) {#6};
            \noexpand\draw[color=#2] (circ-#7-#5) -- (#1);}
            \temp@pmg
    }
\begin{document}
\begin{frontmatter} 

\title{Extending Properly \( \REA[n] \) Sets\thanks{Thanks to two anonymous referees for many helpful comments and editing suggestions.}}
\runtitle{Extending Properly \( \REA[n] \) Sets}
\begin{aug}
\author[A]{\inits{P.}\fnms{Peter} \snm{Cholak}\ead[label=e1]{cholak@nd.edu}
  \thanks{Cholak was partially supported by a Focused Research Group grant from the National Science Foundation of the United States, DMS-1854136.}}
\author[B]{\inits{P.}\fnms{Peter} \snm{Gerdes}\ead[label=e2]{gerdes@invariant.org}}
    \address[A]{Mathematics Department, \orgname{University of Notre Dame du Lac}, 255 Hurley Building, Notre Dame, IN 46556, \cny{USA}\printead[presep={\\}]{e1}}
\address[B]{Mathematics Department, \orgname{Indiana University, Bloomington}, Rawles Hall, 831 East 3rd St., Bloomington, IN 47405 \cny{USA}\printead[presep={\\}]{e2}}
\end{aug}

\begin{keyword}
\kwd{REA sets}
\kwd{recursively enumerable and above}
\kwd{Turing degree}
\kwd{recursively enumerable}
\kwd{computability theory}
\kwd{recursion theory}
\end{keyword}


\end{frontmatter}

\def\hat#1{{\widehat{#1}}}  

\section{Introduction}
In \cite{Jockusch1984PseudoJump} Jockusch and Shore introduce the \( \REA[n] \) sets.  A \( \REA[1] \) set is just an \re set, while an \( \REA[n+1] \) set is a set  of the form \( A \Tplus \REset(A){e} \) where \( A \) is \( \REA[n] \).  Despite the fact that the \( \REA[n] \) sets are merely the result of iterating the operation of adding an \re set, many basic questions remain even when \( n \) is finite.

One natural property to investigate about \( \REA[n] \) sets is when they are properly \( \REA[n] \).

\begin{definition}\label{def:properly-n-REA}
    A set \( B \) is properly \( \REA[n] \) if \( B \) is \( \REA[n] \) and \( B \) isn't Turing equivalent to any \( \REA[m] \) set with \( m < n \).
\end{definition}

It is evident that for all \( n \in \omega \) there are properly \( \REA[n] \) sets. But it's obviously not the case that for every \( \REA[n] \) set \( A \) there is a properly \( \REA[n+1] \) set of the form \( A \Tplus \REset(A){e} \), i.e., not every \( \REA[n] \) set can be extended to a properly \( \REA[n+1] \) set.  For instance, the empty set is a \( \REA[1] \) set but can't be extended to a properly \( \REA[2] \) set.

A natural hypothesis is that any properly \( \REA[n] \) set can be extended to a properly \( \REA[n+1] \) set.  This represents the most optimistic possible hypothesis about when \( \REA[n] \) sets can be extended to properly \( \REA[n+1] \) sets.  

\begin{hypo}\label{hypo:extendable}
    If \( A \) is a properly \( \REA[n] \) set then there is a properly \( \REA[n+1] \) set (Turing equivalent to a set\footnote{We add this caveat because, in the argument below, we adopt a slightly different form for \REA[n] sets that makes the argument more convenient.}) of the form \( A \Tplus \REset(A){e} \).  In such cases, we say \( A \) can be extended to a properly \( \REA[n+1] \) set.  
\end{hypo}

Further evidence for this hypothesis comes in the form of a result by Soare and Stob \cite{Soare1982Relative}  who demonstrate that for any \re set \( \REset{e} \Tgtr \Tzero \) there is a set \( \REA[1] \) in  \( \REset{e} \) but not of \re degree.  This establishes that every properly \( \REA[1] \) set  can be extended to a properly \( \REA[2] \) set.  
This result was extended to \( n=2 \) by Cholak and Hinman when they established the following result (recast using the above definitions) in \cite{Cholak1994Iterated}.

\begin{theorem}\label{thm:extend-two-REA}
If \( X \) is properly \( \REA[2] \) then there is a \( \REA(X) \) set \( \REset(X){e} \Tplus X \) which is properly \( \REA[3] \).  
\end{theorem}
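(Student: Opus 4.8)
The plan is to build \( Y \) \re in \( X \) by a priority construction carried out with oracle \( X \). Fix an \re set \( A \) and an index \( a \) with \( X \Tequiv A \Tplus \REset(A){a} \); the only property of \( X \) the construction will use is that, \( X \) being \emph{properly} \( \REA[2] \), it is not of \re degree, i.e.\ \( X \nTequiv W \) for every \re set \( W \). Since \( Y \) will be \re in the \( \REA[2] \) set \( X \), the set \( X \Tplus Y \) is automatically \( \REA[3] \), so it suffices to meet, for every index \( e \) coding a quadruple \( (\Phi,\Psi,j,i) \) of two Turing functionals and two indices, the requirement
\[
  N_e:\qquad \Phi^{X \Tplus Y}\neq W_j \Tplus W_i^{W_j}\ \ \text{ or }\ \ \Psi^{W_j \Tplus W_i^{W_j}}\neq X \Tplus Y .
\]
Meeting every \( N_e \) says exactly that \( X \Tplus Y \) is Turing equivalent to no \( \REA[2] \) set — the \re sets occur as the subcase \( W_i^{W_j}=\eset \) — and hence that \( X\Tplus Y \), already \( \REA[3] \), is properly \( \REA[3] \). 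So the theorem reduces to carrying out such a construction; a bare relativization of Soare--Stob to \( A \) is not obviously enough here, since that only avoids the \( A \)-\re degrees, whereas the \( \REA[2] \) degrees form a larger family.

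The requirements are handled by strategies on a tree. For a node \( \alpha \) assigned \( N_e \) with data \( (\Phi,\Psi,j,i) \), write \( R=W_j \) and \( C=R\Tplus W_i^{R} \). Node \( \alpha \) watches the length of agreement between \( \Phi^{X\Tplus Y}_s \) and \( C_s \), and between \( \Psi^{C}_s \) and \( (X\Tplus Y)_s \), with matched uses, and has outcomes \( \infty<_{L}f \). Along \( f \) the length of agreement is bounded, one reduction is incorrect, and \( N_e \) holds outright. Along \( \infty \) the node forces a disagreement, working in cycles: in cycle \( k \) it appoints a fresh follower \( y_k \) with a large coding position, waits for an \( \alpha \)-expansionary stage at which \( \Psi^{C} \) converges to \( 0 \) on the coordinate coding \( y_k \) and \( \Phi^{X\Tplus Y} \) has converged on all of \( C \) below the matching use, and then enumerates \( y_k \) into \( Y \), making that coordinate disagree. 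The disagreement is undone only if \( R \) later enumerates a number below the use into \( R \) of that \( C \)-computation — including the \( R \)-uses of all the \( W_i^{R} \)-subcomputations it invokes — in which case \( \alpha \) drops \( y_k \), which stays in \( Y \) harmlessly, and passes to cycle \( k+1 \).

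The crux, and the step I expect to be the main obstacle, is to rule out \( \alpha \)'s running through infinitely many cycles, so that some follower's disagreement becomes permanent and \( N_e \) is met. If \( \alpha \) did run infinitely many cycles, then \( R \) would enumerate a fresh number at the end of each cycle; one must argue that in that case both reductions are actually correct (so \( X\Tequiv C \)) and that the record of the construction — the followers, the linked pairs of \( \Phi \)- and \( \Psi \)-computations, and the \( R \)-numbers destroying them — can be assembled, in the spirit of Soare and Stob, into a Turing functional \( \Gamma \) and an \re set \( U \) with \( \Gamma^{U}=X \), contradicting that \( X \) is not of \re degree. The delicate points are that \( Y \) is only \re in \( X \), not computable from \( X \); that the stagewise approximation to \( W_i^{R} \) is non-monotone, so \( U \) must be designed to track exactly those retractions the construction itself provokes; and that all of this must survive injury from higher-priority nodes and stay consistent with the enumeration of \( Y \). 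This is exactly where properness of \( X \), rather than mere \( \REA[2] \)-ness, is used. Granting it, the rest — that the tree has a true path, that every \( N \)-requirement is permanently satisfied by its strategy on the true path, and that \( Y \), being \re in \( X \), equals \( \REset(X){e} \) for some \( e \), which then names the desired properly \( \REA[3] \) set \( \REset(X){e}\Tplus X \) — is routine.
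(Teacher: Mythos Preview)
The paper does not itself prove this theorem; it is quoted from \cite{Cholak1994Iterated} as background, so there is no in-paper argument to compare against. Your outline has the right architecture---build \( Y \) r.e.\ in \( X \), diagonalize against every \( 2 \)-REA set \( C \), and show that a strategy cycling infinitely would contradict the properness of \( X \)---but there is a concrete gap in the cycle mechanism.

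You claim that after enumerating \( y_k \) into \( Y \), ``the disagreement is undone only if \( R \) later enumerates a number below the use.'' That is false as stated: with \( C = R \oplus W_i^{R} \), the \( \Psi^{C} \)-computation is equally well destroyed by a fresh element entering \( W_i^{R} \) below the use via a newly enumerated axiom compatible with the \emph{unchanged} \( R \). In the language of \cref{lem:positive-change-property}, this is a \( 0\to 1 \) change in column~\( 2 \) of \( C \), which requires no column-\( 1 \) change at all. So a single enumeration into \( Y \) forces only a \( C \)-change, not an \( R \)-change, and infinitely many such cycles does not by itself produce the supply of \( R \)-enumerations you need to assemble your \( \Gamma \) and \( U \). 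The argument in \cite{Cholak1994Iterated} closes this by a more elaborate cycle exploiting the positive-change property of \( 2 \)-REA sets (the \( n=2 \) instance of \cref{lem:return-forces-change}): once \( C \) has responded by a column-\( 2 \) change, a further action that drives the configuration back toward an earlier state forces the next \( C \)-response down into column~\( 1 \), i.e.\ into \( R \). You have correctly located the obstacle, but the one-step Soare--Stob cycle you wrote down does not surmount it; the missing ingredient is precisely this two-level interaction with the columns of \( C \).
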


In other words any \( \REA[2] \) set which isn't of \re degree can be extended to a \( \REA[3] \) set not of \( \REA[2] \) degree.   However, despite the suggestive evidence we show that the above attractive hypothesis in fact fails by proving the following theorem.

\begin{restatable}{theorem}{mainthm}\label{thm:nonextendable-three-REA}
    There is a properly \( \REA[3] \) set \( A \) which can't be extended to a properly \( \REA[4] \) set.  The set \( A \) can also be taken to be \( \Delta^0_2 \).  
\end{restatable}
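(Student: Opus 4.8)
The plan is to build the $\Delta^0_2$ set $A$ by an infinite-injury priority construction on a tree, working with the iterated presentation $A = C \Tplus \REset(C){d} \Tplus \REset(B){e}$, where $C$ is r.e., $\REset(C){d}$ is r.e.\ in $C$, $\REset(B){e}$ is r.e.\ in $B \eqdef C \Tplus \REset(C){d}$, and all three ingredients --- together with the indices $d,e$ --- are chosen by us; it is here that the slightly nonstandard presentation of $\REA[n]$ sets mentioned above is convenient, since it lets us split $A$ into a ``coding'' region and a ``diagonalisation'' region and keeps the stage-by-stage approximations to $A$, $B$, $C$ under uniform control. We impose throughout the side conditions that $B$ is low and $\REset(B){e}$ is low over $B$, so that $A$ is low, $A$ is $\Delta^0_2$, and $\REset(A){k} \Tleq \jump{A} \Tequiv \jump{B} \Tequiv \Tzeroj$ for every $k$. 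The requirements are, for all indices $i$ and $k$:
$\mathcal{P}_i$: letting $(\hat D,\hat R,\Phi_i,\Psi_i)$ be the $i$-th tuple with $\hat D$ r.e.\ and $\hat R$ r.e.\ in $\hat D$, ensure $\Phi_i^{A} \neq \hat D \Tplus \hat R$ or $\Psi_i^{\hat D \Tplus \hat R} \neq A$;
$\mathcal{N}_k$: build a \( \REA[3] \) set $G_k$ together with Turing functionals $\Gamma_k,\Delta_k$ such that $\Gamma_k^{G_k} = A \Tplus \REset(A){k}$ and $\Delta_k^{A \Tplus \REset(A){k}} = G_k$, so that $A \Tplus \REset(A){k} \Tequiv G_k$ has \( \REA[3] \) degree.

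The $\mathcal{P}_i$ are handled classically: each reserves a fresh coordinate $w_i$ of $A$, waits for $\Psi_i^{\hat D \Tplus \hat R}(w_i)$ to converge with value $v$, and then forces $A(w_i) = 1-v$ (legitimate since $A$ is $\Delta^0_2$); because $\hat D \Tplus \hat R$ is r.e.\ the opponent's computation can be injured only finitely often, so $\mathcal{P}_i$ acts finitely often, and if it acts infinitely often the use of $\Psi_i^{\hat D \Tplus \hat R}(w_i)$ grows without bound, which already witnesses $\Psi_i^{\hat D \Tplus \hat R} \neq A$. Meeting all $\mathcal{P}_i$ makes $A$ properly \( \REA[3] \); since $A$ is also $3$-REA and $\Delta^0_2$ by construction, the first two assertions of the theorem follow once the $\mathcal{N}_k$ are met.

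The heart of the argument is $\mathcal{N}_k$. We take $G_k = \hat D_k \Tplus \hat R_k \Tplus \hat S_k$ with $\hat D_k$ a fresh r.e.\ set, $\hat R_k$ r.e.\ in $\hat D_k$, and $\hat S_k$ r.e.\ in $\hat D_k \Tplus \hat R_k$, and we dedicate columns so that $\hat D_k \Tplus \hat R_k$ carries a faithful copy of $B$ (possible because $B$ is $2$-REA), while $\hat S_k$ carries a copy of $\REset(B){e}$ together with an encoding of $\REset(A){k}$; the remaining columns of $\hat D_k$ act as an eraser. Since $A$ is low over $B$ we have $\REset(A){k} \Tleq \jump{B}$, so $\REset(A){k}$ has a $\Delta^{0,B}_2$ approximation, and we code each current belief about $\REset(A){k}(n)$ by a ball in $\hat S_k$; when the belief is withdrawn --- which, because $A$ is built by us, happens only when we move $A$ on behalf of some $\mathcal{P}_i$ --- we invalidate that ball by pumping the eraser column of $\hat D_k$, and $\Gamma_k$ decodes $G_k$ by honouring only the not-yet-invalidated balls. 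This is precisely the step that needs the third \( \REA \) level: with only two levels --- the situation for properly \( \REA[2] \) sets, where by \cref{thm:extend-two-REA} no such $A$ exists --- there is nowhere to store the mind-change history of the $\Delta^{0,B}_2$ approximation, and the reduction $G_k \Tleq A \Tplus \REset(A){k}$ breaks, whereas with a genuine r.e.\ set $\hat D_k$ below $\hat S_k$ that we may freely enumerate that history can be parked and later recovered. The whole construction is laid out on a priority tree, each $\mathcal{N}_k$-node guessing for every higher-priority $\mathcal{P}_i$ whether it ever acts, so that along the true path each $\mathcal{N}_k$ absorbs only finitely many $\mathcal{P}$-injuries and repairs them with finitely many extra invalidations; the lowness side conditions on $B$ and on $\REset(B){e}$ are met by standard lowness modules.

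The step I expect to be the main obstacle is the verification that each $\mathcal{N}_k$-module is correct, and within it the reduction $\Delta_k^{A \Tplus \REset(A){k}} = G_k$: one must show that the opponent's freedom to choose $\REset(A){k}$ --- an arbitrary set r.e.\ in the moving $\Delta^0_2$ set $A$, whose approximation we do not control --- can always be matched by invalidations and copied data so that $A \Tplus \REset(A){k}$ reconstructs $\hat D_k$, $\hat R_k$ and $\hat S_k$ (in particular the invalidation pattern) in the limit, while $\hat S_k$ stays r.e.\ in $\hat D_k \Tplus \hat R_k$ and $G_k$ stays $3$-REA. This is where the low-over-$B$ hypothesis, the splitting of $A$ into coding and diagonalisation regions, and the bookkeeping of exactly which injuries a given $\mathcal{N}_k$-node must react to all do real work; a secondary difficulty is arranging the priority tree so that the finitary $\mathcal{P}_i$ and the infinitary, pairwise-independent $\mathcal{N}_k$ coexist without deadlock.
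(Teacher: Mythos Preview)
Your proposal has two substantial gaps.

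First, your $\mathcal{P}_i$ strategy is misdescribed. You cannot simply ``force $A(w_i)=1-v$'': $A$ must remain $3$-REA, not merely $\Delta^0_2$, so changes to $A$ must come from enumerations into columns $1,2,3$ in the REA fashion. Moreover $\hat D\oplus\hat R$ is $2$-REA, not r.e., so a single diagonalisation does not suffice --- the opponent can change below the use and recover. The correct mechanism is a three-step flip-flop (enumerate $c$ into $A^{[3]}$, cancel via $A^{[2]}$, cancel that via $A^{[1]}$) together with the positive-change property showing a $2$-REA set cannot track three reversals. This is repairable, but it matters, because each flip-flop step perturbs $A$ and hence can perturb $W_k^A$.

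Second, and fatally, your $\mathcal{N}_k$ strategy does not close. You record mind-changes of $W_k^A$ by ``pumping the eraser column of $\hat D_k$''. But $\hat D_k$ is r.e., so every pump is permanent, and the resulting eraser pattern $E_k\subseteq\hat D_k$ is a record of the \emph{history} of your approximation to $W_k^A$ --- how many times each belief was withdrawn --- not of its limit. For $\Delta_k^{A\oplus W_k^A}=G_k$ you must recover $E_k$ from $A\oplus W_k^A$, and nothing in your plan explains how: lowness gives $W_k^A\leq_T\mathbf{0}'$, but the mind-change function of a $\Delta^0_2$ approximation is not computable from the limit. The perturbations come from \emph{every} diagonalisation requirement, higher and lower priority alike, since any enumeration into $A$ can kill an axiom for $W_k^A$ with large use; there are infinitely many, so $E_k$ is infinite and its content depends on the whole construction, not on $A\oplus W_k^A$. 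Your one-sentence appeal to the third level (``with a genuine r.e.\ set $\hat D_k$ below $\hat S_k$ \ldots that history can be parked'') does not actually distinguish $n=3$ from $n=2$: the identical outline could be written for a low properly $2$-REA $A$, where \cref{thm:extend-two-REA} says it must fail.

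The paper's solution is structurally different. The witness $Y_i$ is built so that the \emph{columnwise} join of $A$ and $Y_i$ is $3$-REA, which means enumerations into $A^{[2]}$ and $A^{[1]}$ automatically cancel recent enumerations into $Y_i^{[3]}$ and $Y_i^{[2]}$; the coupling between $A$ and $Y_i$ is what lets the flip-flop for one requirement undo its own damage to another. Even with that coupling, one must still show that the flip-flop for $\mathcal R_{j,e}$ never forces an enumeration into $Y_i^{[1]}$ that $A\oplus W_i^A$ cannot see. That is the ``agreement'' machinery of \cref{sec:overview:agreement,sec:overview:general-agreement,sec:full}: one enumerates many candidates $c_0,c_1,\ldots$ into $A^{[3]}$ and runs a recursively defined strategy (indexed by functions $h$ of finite support, with an inductive winning argument on $\int h$) to guarantee that two preactive stages eventually have matching $Y_i^{[\le 2]}$. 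That argument is the real content of the theorem, and your outline contains no analogue of it.
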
    

Note that, if \( A \) isn't properly \( \REA[3] \) then it is evident that no set of the form \( A \Tplus \REset(A){e} \) will be properly \( \REA[4] \).  Our strategy, roughly speaking, will be to build a \( \REA[3] \) set \( A \) such that enumeration into the first and second components of \( A \) (where the first component of \( A \) is the \re part of \( A \) and the second component is the part of $A$ \re in the first component) doesn't result in further changes to \( \REset(A){i} \).  The construction takes the form of a finite injury argument, with the complexity arising from the difficulty of showing that the requirements eventually succeed.  However, before we present the main construction, we first review some notation and define the \( \REA[n] \)  sets in \cref{sec:background}.  Note that in this section we also introduce the idea that \( \REA[n] \) sets can be viewed as \( n+1 \) column sets (with the \( 0 \)-th column empty) produced by an \re set of axioms. 

Readers familiar with \( \REA[n] \) sets may wish to jump ahead to \cref{sec:overview}, where we describe the requirements that our construction will meet and delineate some basic conditions that our construction will satisfy.  

\section{Background}\label{sec:background}
\subsection{Notational Conventions}\label{sec:background:conv}

We largely adopt the  standard notation seen in \cite{Odifreddi1992Classical} which we briefly review.  The use of \( \recfnl[s]{i}{X}{y}   \) is denoted by \( \use{\recfnl[s]{i}{X}{y}} \), the \( e \)-th set \re in \( X \) by  \( \REset(X){e} \) and we write \( y \entersat{s} X \) (\( y \entersat{s} \setcmp{X} \)) to indicate \( y \) enters (leaves) \( X \) at stage \( s \).      We let \( \code{x_0, \ldots x_n} \) (\( \pair{x}{y} \eqdef \frac{1}{2}(x+y)(x+y+1)+y  \) ) denote a canonical bijection of \( \omega^{< \omega} \) (\( \omega^2 \)) with \( \omega \) and define \( A \Tplus B \),  \( \TPlus_{n \in S} X_n  \) and \( \setcol{X}{n} \)/\( \setcol{X}{< n}  \)  standardly\footnote{That is, \( A \Tplus B \eqdef \set{y \st y = 2x \land x \in A \lor y = 2x+1 \land x \in B} \) and \( \TPlus_{n \in S} X_n \eqdef \set{\pair{n}{x}}{n \in S \land x \in X_n}   \)}.   We write \( \sigma \incompat \tau \) (\( \sigma \compat \tau \)) to indicate that two strings/partial functions/etc. are incompatible (compatible).  

We also adopt some less common notational conventions and assumptions.   We let the variables \( \chi, \xi, \eta \) range over \( \bpfuncs \) (binary partial functions with finite domain)  with extension denoted by \( \supfun \).  By identifying sets with their characteristic functions (so \( A_s \) is really a \( \bpfuncs \)) we gain the ability for our approximations to a set \( A \)  to take no position on whether \( x \in A \).  This turns out to be helpful in giving well-behaved stagewise approximations to \( \REA[n] \) sets.


With this notation in place, we briefly review \( \REA[n] \) sets and describe how we identify such sets with \( n+1 \) (non-trivial) column sets (and whose remaining columns are empty).  The reader familiar with \( \REA[n] \) sets may wish to skip ahead to \cref{sec:overview} after familiarizing themselves with the notion of an \( A \) supported approximation in \cref{def:supported}.

\subsection{\( \REA[\alpha] \) Sets}\label{sec:background:rea}

In \cite{Jockusch1984PseudoJump} Jockusch and Shore introduce the \( \REA[\alpha] \) sets for any ordinal notation \( \alpha \in \kleeneO \).    In this paper, we will only be concerned with finite values of \( \alpha \) so we adopt the following definition (equivalent up to \( 1 \)-degree for finite \( \alpha \)).

\begin{definition}\label{def:rea-op}
Given a computable function \( f \),  we define the \( \REA[n] \) operator \( \REAop(X){f}{n}  \) (or \( \REAop(X){e}{n}  \) where \( e \) is an index for \( f \)),  \( n \in \omega \)  via the following inductive definition (where we let  \( \REset(Z){f(m)}  = \eset \) if \( f(m)\diverge \)).    

\begin{equation}\label{eq:rea-def}
\begin{aligned}
\REAop*(X){f}{0} &= X\\
\REAop*(X){f}{m + 1} &= \REset({\REAop(X){f}{m}}){f(m)} \\
\REAop(X){f}{n} &= \TPlus_{m < n + 1} \REAop*(X){f}{m} 
\end{aligned}
\end{equation}
Furthermore, \( C \) is an \( \REA(X)[n] \) set just if \( C = \REAop(X){f}{n}  \) for some computable function \( f \) and \( C \) is an \( \REA[n] \) set if it is an \( \REA(\eset)[n] \) set.   We define \( X^{n}_e \) (written just as \( X_e \) when \( n \) is understood) to be \( \REAop(\eset){e}{n}  \).    
\end{definition}

Note that, because of how we define \( \TPlus_{m < n + 1} \) an \( \REA(X)[n] \) set \( Z \)  will have \( \setcol{Z}{m} = \eset \) for \( m > n \).  We further note that we can describe the construction of \( \REA(X)[n] \) with \( n \in \omega \) sets via the enumeration of `axioms' defined as follows.  



\begin{definition}\label{def:re-axiom}
    An \defword{axiom} is a pair \( \laxiom{\sigma}{y} \) with \( \sigma \in \bpfuncs, y \in \omega \).  An \defword{REA axiom} is an axiom that further satisfies \( y = \pair{m}{z} \) with \( m > 0 \) and  \( \dom \sigma  \subset \setcol{\omega}{< m} \).  Finally, an \defword{\( n \)-REA axiom } is an REA axiom with \( m \leq n \).
\end{definition}

We think of the axiom \( \laxiom{\sigma}{y} \) as an instruction to put \( y \) into a set \( Z \) provided \( \sigma \subfun Y \).  Thus, regarding \( y \) as coding an element of \( 2^{< \omega} \), an \re set of axioms defines (if compatible) a computable functional.     An REA axiom \( \laxiom{\sigma}{\pair{m}{z}}  \)  is then an instruction to put \( z \) into  \( \setcol{X}{m} \) provided \( \sigma \subfun \setcol{X}{< m} \) (as this is equivalent to \( \sigma \subfun X \)).

We now argue that for \(  n \in \omega \) we may identify \( \REA[n] \) operators \( \REAop{e}{n} \) (and hence \( \REA(X)[n] \) sets) with \re sets of \( n \)-REA axioms and effectively translate between \re indices for sets of \( n \)-REA axioms  and indices for \( \REA[n] \) operators.

\begin{lemma}\label{lem:build-rea}
    If \( \axset \) is a \re set of \( \REA[n] \) axioms (\(  n \in \omega \)) then the operator \( \REAop{A}{n} \) defined by
    \begin{equation}\label{eq:rea-axiom-def}
        \pair{l}{y} \in \REAop{A}{n}(X) \iff \left(l = 0 \land y \in X \right) \lor \exists[\sigma \subfun \REAop{A}{n}(X) ]\Bigl[ \laxiom{\sigma}{\pair{l}{y}} \in A \Bigr]
    \end{equation}
     is an \( \REA[n] \) operator.  Conversely, given an \( \REA[n] \) operator \( \REAop{f}{n} \)  there is a \( \re \) set \( A_f \) such that \( \REAop{A_f}{n} = \REAop{e}{n}  \).       Furthermore, we can effectively translate between \re indices of sets of \( \REA[n] \)  axioms and indices of \( \REA[n] \) operators. 
\end{lemma}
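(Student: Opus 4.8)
The plan is to establish both directions of \cref{lem:build-rea} by one column‑by‑column induction, with the $s$-$m$-$n$ theorem supplying all the uniformity. For the first direction, fix a \re set $\axset$ of $\REA[n]$ axioms (the set written $A$ in \eqref{eq:rea-axiom-def}), and first note, using \cref{def:re-axiom}, that \eqref{eq:rea-axiom-def} is a genuine, non‑circular recursion on columns: the antecedent of an $\REA[n]$ axiom whose consequent lies in column $m$ has domain inside $\setcol{\omega}{< m}$, so membership in column $m$ depends only on the columns below $m$. For each $m$ with $1 \le m \le n$, use $s$-$m$-$n$ to pick, uniformly in an \re index for $\axset$, an index $f(m-1)$ with $\REset(Z){f(m-1)} = \set{z}{\exists[\sigma \subfun Z][\laxiom{\sigma}{\pair{m}{z}} \in \axset]}$ for every $Z$; set $f(m)$ arbitrarily for $m \ge n$, since those values are never consulted in \eqref{eq:rea-def}. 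An induction on $m \le n$ then shows that column $m$ of $\REAop(X){f}{n}$ coincides with column $m$ of $\REAop{A}{n}(X)$: the case $m = 0$ is trivial, and in the inductive step the agreement of the columns below $m$ makes ``$\sigma \subfun \REAop(X){f}{m-1}$'' and ``$\sigma \subfun \REAop{A}{n}(X)$'' equivalent for every relevant $\sigma$ (again because $\dom \sigma \subset \setcol{\omega}{< m}$), so column $m$ is enumerated by the same condition on both sides. Since an $\REA[n]$ axiom has $m \le n$, both sets have all columns above $n$ empty, so $\REAop{A}{n} = \REAop{f}{n}$ as operators.

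For the converse, fix an $\REA[n]$ operator $\REAop{f}{n}$. The only subtlety is that a generic enumeration procedure for $\REset(\cdot){f(m-1)}$ may query oracle bits in columns $\ge m$, whereas the axioms we may emit must have antecedents confined to columns below $m$. But in \eqref{eq:rea-def} the oracle handed to the $(m-1)$-st step is $\REAop(X){f}{m-1}$, all of whose columns $\ge m$ are empty, so we first use $s$-$m$-$n$ to replace $f(m-1)$ by an index $g(m-1)$ for the provably equal operator that returns ``out'' for every query in a column $\ge m$ without consulting its oracle. Then put $\laxiom{\sigma}{\pair{m}{z}}$ into $A_f$ exactly when $1 \le m \le n$, $\dom \sigma \subset \setcol{\omega}{< m}$, and the $g(m-1)$-st procedure, run with the finite oracle $\sigma$, enumerates $z$ using only queries inside $\dom \sigma$. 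This $A_f$ is a \re set of $\REA[n]$ axioms whose index is computable from an index for $f$, and the same column‑by‑column induction --- invoking the use principle to see that $\exists[\sigma \subfun Z][\laxiom{\sigma}{\pair{m}{z}} \in A_f]$ holds iff $z \in \REset(Z){g(m-1)}$ --- gives $\REAop{A_f}{n} = \REAop{g}{n} = \REAop{f}{n}$. Reading off indices from the two constructions yields the claimed effective translation in both directions.

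I expect the bookkeeping in the converse direction to be the main obstacle. One has to fix a normal form for \re-in-$Z$ operators in which the use of a convergent enumeration is recorded as an honest member of $\bpfuncs$ supported on the queried positions (and, after the $g$-modification, on columns below $m$), rather than as an initial segment of $\omega$ that would spill into higher columns; and one must check that passing from $f$ to $g$ leaves the composite operator $\REAop{f}{n}$ unchanged on \emph{every} oracle, not just on the sets produced inside the construction. The base case, the use‑principle step, and the $s$-$m$-$n$ index chases are then routine.
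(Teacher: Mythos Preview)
Your proof is correct and follows essentially the same route as the paper's: in one direction use $s$-$m$-$n$ and a column-by-column induction to read off a computable $f$ from the axiom set, and in the other direction enumerate axioms from converging computations. In fact your converse is more careful than the paper's one-line version (``enumerate $\laxiom{\sigma}{\pair{m+1}{x}}$ when $\recfnl{f(m)}{\sigma}{x}\conv$''), which does not explicitly arrange that $\dom\sigma \subset \setcol{\omega}{<m+1}$; your passage from $f$ to $g$ is exactly the clean way to discharge that point.
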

\begin{proof}
    We first prove that \( \REAop{A}{n} \) is an \( \REA[n] \) operator.  Note that, for \( 0 < l \leq n \), \( \setcol{\REAop{A}{n}(X)}{l} \) is determined by \( \setcol{\REAop{A}{n}(X)}{<l}  \) and \( X \) so \( J(X) \) is well defined.  Furthermore, the above equation explicitly defines \( \setcol{\REAop{A}{n}(X)}{l} \) from \( \setcol{\REAop{A}{n}(X)}{<l} \)  and \(  l  \) via a (uniformly) \( \sigmazi \) formula.  Thus, by an application of the s-m-n theorem \cite{Rogers1987Theory} there is a computable function \( f \) satisfying \cref{def:rea-op}.  

    Given \( \REAop{e}{n} \) we simply enumerate axioms \( \laxiom{\sigma}{\pair{m+1}{x}} \) into \( \axset \)  when \( \recfnl{f(m)}{\sigma}{x}\conv \) for \( m < n \).  The uniformity claim is evident from the proof.     
\end{proof}

In light of this result we adopt the following notation.  
\begin{notation}\label{def:axioms-for}
 \( \axset(J)  \) denotes a canonical enumeration of an \re set of axioms corresponding to the \( \REA[n] \) operator \( J \).   When \( X_e \) is an \( \REA[n] \) set we write \( \axset(X_e) \) for \( \axset(\REAop{e}{n}) \) (remember \( X_e \eqdef \REAop(\eset){e}{n} \) )   
 \end{notation} 

 In various cases we'll further specify which of the many potential enumerations of axioms we mean to specify with \( \axset(X_e) \).  For instance, we'll explicitly define the enumeration of axioms for the \REA[n] sets we explicitly construct and rely on \cref{lem:build-rea} to produce the corresponding \REA[n] set.   We adopt the following definition to assist us in defining the axioms we enumerate in these construction.



\begin{definition}\label{def:dependent-on}
    The axiom \( \laxiom{\sigma'}{y} \) \defword{depends} on \( \sigma \) if \( \sigma \subfun \sigma' \).  We say the axiom \( \laxiom{\sigma}{y} \) (element \( y \) ) is enumerated dependent on \( \delta \) to mean we enumerate \( \laxiom{ \sigma \union \delta }{y} \) \( \laxiom{  \delta }{y}  \)  into \( \axset \).
\end{definition}

We will call an axiom  \( \laxiom{\sigma}{y} \) compatible (incompatible) with a partial function \( \tau \) just if the partial function \( \sigma \)  determining when the axiom  applies is compatible (incompatible) with \( \tau \). We now argue that we can use an axiom set to approximate an \( \REA[n] \) set in well-behaved ways.  

\begin{definition}\label{def:supported}
    If \( \mathcal{A} \) is a set of \( n \)-REA axioms we say that \( \chi \in \bpfuncs \) is \( \mathcal{A} \) supported just if for all \( m,x \) with \( \chi( \pair{m}{x})\conv \) all of the following hold  
    \begin{itemize}
        \item If \( m = 0 \) then \( \chi( \pair{m}{x}) = X(x) \).
        \item If \( m > n \) then \( \chi( \pair{m}{x}) = 0 \).
        \item If \( 0 < m \leq n \) then \(\chi( \pair{m}{x}) = 1 \iff \exists(\sigma) \left[ \laxiom{\sigma}{\pair{m}{x}} \in A \land \sigma \subfun \chi\right] \)
    \end{itemize}

    If \( \chi \) is an \( \axset[s](J)  \) supported approximation we say that \( \chi \) is an  \( s \)-supported\footnote{Technically, being \( s \)-supported is relative to the choice of an enumeration of axioms but we will only use this notation when there is no ambiguity about the enumeration of the set  \( \axset(J) \). } approximation to \( J(X) \).
\end{definition}
Note that the notion of being \( s \)-supported is relative to a choice of a canonical enumeration of axioms for \( J(X) \).  Also, it's worth observing the following point.

\begin{remark}\label{rmk:unique-supported}
         Given any set of axioms \( \axset \) and finite domain \( D \subset \omega \) there is a unique \( \axset \)-supported   \( \chi \) with domain \( D \) .         
\end{remark}

 It turns out that for any \( \REA[n] \) operator \( J \) we can effectively find an enumeration of axioms \( \axset[s] \) and a computable functional  \( Y^{X}_s \) with \( \lim_{s \to \infty} \dom Y_s = \omega \),  such that \( Y^{X}_s \) is always an \( \axset[s] \) supported approximation to \( J(X) \)  and infinitely often \( Y^{X}_s \subfun J(X) \).  A proof of this claim can be found in the appendix to give the reader an idea how this can work.  However, we don't actually need the full strength of this result and will instead directly specify a stagewise enumeration of axioms and a \( \axset[s] \)  approximation \( A_s \)  to the \( \REA[n] \)  sets we are constructing.   We will ensure this approximation has the following property (where \( A \) here is the  \( \REA[n] \) set being constructed).

 \begin{condition}\label{cond:building-A:infinite-correctness} 
\( \existsinf(s) A_s \subfun A \). 
\end{condition}

In fact, our approximation to \( A \) (which we define below) will turn out to satisfy the stronger condition that there are infinitely many stages such that \( \forall(t > s) A_s \subfun A_t \).  Note that, this implies that our approximation \( A_s \) is a \( \Delta^0_2 \) approximation to \( A \).





\section{Construction Overview}
\label{sec:overview}

We adopt the following notation for easier manipulation of \( \REA[n] \) sets defined in terms of columns pursuant to \cref{def:rea-op}.

\begin{definition}\label{not:column-rea-notation} \hfill
\begin{itemize}

    \item \( A \Tplus+ B \eqdef \TPlus_{n \in \omega} \setcol{A}{n} \Tplus \setcol{B}{n} \).
    \item For \( A \) an \REA(X)[n] set we let \( A\colrestr{s} = \set{\pair{k}{x}}{k \leq n \land x < s} \). 

    \item\label{cond:building-A:large} \( l_s > 4 \) is a number chosen large\footnote{Specifically, we will need that \( l_s \) is large enough that if \( x \) is mentioned at or before stage \( s \) then \( l_s > \pair{k}{x+2}, 0 \leq k \leq 3 \) so that \( l_s \) is large enough to see the \textit{next} two elements in each column.} at the \textit{end} of stage \( s \).
\end{itemize}
\end{definition}

Note that if both \( A \) and \( B \) are both \( \REA[n] \) then so is \( A \Tplus+ B \).  Using these conventions we can now describe the broad outline of our proof of \cref{thm:nonextendable-three-REA}.  

To establish the desired claim we'll need to build a properly \( \REA[3] \) set \( A \) that can't be extended to a properly \( \REA[4] \) set.  In other words, for every \( i \in \omega  \) the set \( A \Tplus \REset(A){i} \) must not be properly \( \REA[4] \).  So in addition to the set \( A \) we'll build a sequence of  \( \REA[3] \) sets \( A \Tplus+ Y_i \) Turing equivalent to the sets \( A \Tplus \REset(A){i} \) (note that by using \( A \Tplus+ Y_i \) rather than just \( Y_i \)  we avoid the need for unnecessarily copying changes to \( A \) over to \( Y_i \)).  To verify these equivalences we'll also need computable functionals so, in addition to the \( \REA[3] \) set \( A \) we'll also build sets \( Y_i \) and functionals \( \Gfunc{i}{}, \Tfunc{} \) to satisfy all of the following requirements where \( X_e \) denotes the \( e \)-th \REA[2] set.

\begin{requirements}
\require{P}{i}  \Gfunc{i}{A \Tplus \REset(A){i}} = Y_i \land \Tfunc{Y_i} = \REset(A){i}  \\
\require{R}{j,e} \recfnl{j}{A}{} \neq X_e \lor \recfnl{j}{X_e}{} \neq A 
\end{requirements}

We now  observe that satisfying the above requirements is sufficient to prove the claimed theorem.  However, the impatient reader may wish to jump ahead to the informal description of how \req{R}{j,e} is met in \cref{sec:overview:req-R}.  Note that the general method there will be familiar to anyone familiar with the methods in \cite{Soare1982Relative,Cholak1994Iterated} and broadly resembles the construction of properly \( n \)-\re sets (with some extra bookkeeping).  Readers eager to jump ahead to the unique challenges and features of this construction should turn to \cref{sec:overview:interaction} for an informal discussion or to the full construction in \cref{sec:full}.

First we observe that our choice to avoid index profusion by using \( j \) as the index for both computations in  \req{R}{j,e} is harmless.

\begin{lemma}\label{lem:reqs-enough}
    Suppose \( A  \) and \(  Y_i, i \in \omega \) satisfy the requirements \req{P}{i} and \req{R}{j,e} for all \( i,j,e \) then \( A \) is properly \( \REA[3] \) but can't be extended to a properly \( \REA[4] \) set.     
\end{lemma}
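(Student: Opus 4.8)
The plan is to prove the two assertions separately. Throughout, $A$, each $Y_i$, and each $A \Tplus+ Y_i$ are the \REA[3] objects produced by the construction, so membership in the relevant \REA class is automatic and only the statements about Turing degrees have to be extracted from the requirements. I would handle non-extendability first, since it is the more transparent half.

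\emph{Non-extendability.} Fix $i$. The equation $\Gfunc{i}{A \Tplus \REset(A){i}} = Y_i$ from \req{P}{i} gives $Y_i \Tleq A \Tplus \REset(A){i}$, so, using also $A \Tleq A \Tplus \REset(A){i}$ together with the fact that $A \Tplus+ Y_i$ is computable from the join $A \Tplus Y_i$, we get $A \Tplus+ Y_i \Tleq A \Tplus \REset(A){i}$. Conversely $\Tfunc{Y_i} = \REset(A){i}$ gives $\REset(A){i} \Tleq Y_i$, and since $\Tplus+$ merely interleaves the columns of its arguments, both $A$ and $Y_i$ are computable from $A \Tplus+ Y_i$, whence $A \Tplus \REset(A){i} \Tleq A \Tplus+ Y_i$. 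Hence $A \Tplus \REset(A){i} \Tequiv A \Tplus+ Y_i$, an \REA[3] set (\cref{not:column-rea-notation}). Thus the degree of $A \Tplus \REset(A){i}$ already contains an \REA[3] set, so it contains no properly \REA[4] set; as $i$ was arbitrary, $A$ cannot be extended to a properly \REA[4] set.

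\emph{Being properly \REA[3].} $A$ is \REA[3] by construction, so it remains to see that $A$ is not Turing equivalent to any \REA[m] set with $m < 3$, and for this it suffices to rule out $A \Tequiv X_e$ for all $e$: an \REA[0] set is $\eset$; an \REA[1] set has the form $\set{\pair{1}{x}}{x \in W}$ for an \re set $W$, which is an \REA[2] set (place an index for $W$ in column $1$ and $\eset$ in column $2$); and an \REA[2] set is literally some $X_e$. So suppose toward a contradiction that $\recfnl{a}{A}{} = X_e$ and $\recfnl{b}{X_e}{} = A$ are total. The standard device for collapsing the two reductions witnessing a Turing equivalence into a single index --- this is precisely the ``harmless'' simplification referred to above, and is the argument used in \cite{Soare1982Relative,Cholak1994Iterated} --- produces an index $j$ with $\recfnl{j}{A}{} = X_e$ and $\recfnl{j}{X_e}{} = A$, contradicting \req{R}{j,e}. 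Hence $A \nTequiv X_e$ for every $e$, and $A$ is properly \REA[3].

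\emph{Where the difficulty lies.} This lemma is essentially bookkeeping; the real content of \cref{thm:nonextendable-three-REA} is that the construction actually meets \req{P}{i} and \req{R}{j,e}, which occupies the rest of the paper. Within the lemma the only step that is not a direct unwinding of definitions is the passage to a single index $j$ just used: when $A$ happens to equal some $X_e$ this is the identity functional, and in general it is the usual cross-checking argument, in which $\recfnl{j}{}{}$ searches initial segments of its oracle $Z$ for a ``round trip'' through $\recfnl{a}{}{}$ and $\recfnl{b}{}{}$ that certifies whether $Z$ is $A$ or $X_e$ and then outputs the other one. Everything else is just the algebra of $\Tplus$, $\Tplus+$, and the column structure of \REA[n] sets.
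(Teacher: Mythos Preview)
Your proof is correct and follows the same two-part outline as the paper: read off $A \Tplus \REset(A){i} \Tequiv A \Tplus+ Y_i$ from \req{P}{i} for non-extendability, then contradict some \req{R}{j,e} for the properly \REA[3] claim. The one point of genuine difference is how the single index $j$ is manufactured. The paper does not invoke a general round-trip search; it exploits the column structure directly. Since $X_e$ is \REA[2] it has $\setcol{X_e}{3} = \eset$, so either $\setcol{A}{3} = \eset$ (and $A$ is literally \REA[2], handled by the identity functional) or there is a least $z \in \setcol{A}{3}$, in which case $\pair{3}{z}$ is a concrete point in $A \setminus X_e$ and one defines $\recfnl{j}{Z}{}$ to query $Z(\pair{3}{z})$ and dispatch to $\recfnl{j_0}{}{}$ or $\recfnl{j_1}{}{}$ accordingly.

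Your appeal to a ``round trip'' search is a little loose as written: for arbitrary $\recfnl{a}{}{}, \recfnl{b}{}{}$ there is no guarantee that the wrong composition (say $\recfnl{a}{}{} \circ \recfnl{b}{}{}$ applied to $A$) ever visibly disagrees with the oracle, so the search need not certify which side you are on. The conclusion you want is of course correct, but the honest version of the general device is simply to hard-code a fixed point of disagreement between $A$ and $X_e$ into $\recfnl{j}{}{}$ --- which is exactly what the paper does, choosing that point to be $\pair{3}{z}$. The paper's route is thus a clean special case of the folklore fact you cite, with the advantage that the distinguishing bit is named explicitly.
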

\begin{proof}
 If \req{P}{i} is satisfied for all \(i \) it follows that the \( \REA[3] \) set \( A \Tplus+ Y_i  \) is Turing equivalent to  \( A \Tplus \REset(A){i} \).  It only remains to demonstrate that \( A \) is properly \( \REA[3] \).

Suppose, for a contradiction, that \( A \Tequiv X_e \) for some \( \REA[2] \) set \( X_e \). Then either \( \setcol{A}{3} = \eset \), making \( A \) literally \( \REA[2] \), or \( \setcol{A}{3} \neq \eset \).  In the first case  \req{R}{j,e} would fail when \( j \) is an index for the identity function and \( e \) an index for the empty set.  In the second case suppose that the Turing equivalence is witnessed by \( \recfnl{j_0}{A}{} = X_e \) and \( \recfnl{j_1}{X_e}{} = A \) and \( z \) is the least element in \( \setcol{A}{3} \).   Now let \( \recfnl{j}{Z}{} \) be the functional which asks if \( \pair{3}{z} \in Z \) and if not computes \( \recfnl{j_1}{Z}{} \) and if so computes \( \recfnl{j_0}{Z}{}  \).  Hence, if \( A \) is equivalent to some \( \REA[2] \) set \( X_e \) it follows that some \req{R}{j,e} isn't satisfied.      
\end{proof}

We will build  \( A \) or \( Y_i \)  by enumerating axioms into \( \axset \) and \(\axset(Y_i) \) respectively which, by the remarks above, uniquely determines the corresponding sets.  We will define approximations \( A_s, Y_{i,s} \) to the sets \( A, Y_i \) and adopt the convention that an attempt to enumerate \( x \) into \( A \) (or \( Y_i \)) at stage \( s \) means enumerating \( x \) dependent on a large initial segment of the prior columns as defined by \( A_s, Y_{i,s} \).  

We now formally specify our approximation \( A_s \) and specify some properties it will be constructed to have.

\begin{property}\label{cond:building-A}\leavevmode
\begin{propenum}
     \item\label{cond:building-A:unique} \( A_s \) is the unique \( s \)-supported approximation (i.e., \( \axset[s] \)-supported) with \( \dom A_s = \omega\colrestr{l_s} \). 

     \item \label{cond:building-A:largeuse} If \( x \) is enumerated into \( \setcol{A}{k} \) at stage \( s \) then \( x \nin \setcol{A_{s-1}}{k}  \) and \( x \) is enumerated dependent on \( \setcol{A_s}{k'} \) for \( k' < k \) (hence on prior columns of  \( A \) up to height \( l_s \)). 

    \item \label{cond:building-A:even-stages} At odd stages no axioms are enumerated into \( \axset \) and at most one axiom is enumerated into \( \axset \) at even stages.  
\end{propenum}
\end{property}

To avoid the potential circularity induced by the interaction of \cref{cond:building-A:large,cond:building-A:largeuse} we queue any request made to enumerate an element made during stage \( s \)  until after \( l_s \) is chosen at the end of the stage.  Using \( \setcol{A_s}{< n} \) and \( l_s \)  we can build  an axiom satisfying \cref{cond:building-A:largeuse} for an element queued for enumeration into \( \setcol{A}{n} \) which, in turn, lets us determine \( \setcol{A_s}{< n + 1} \).  Working inductively, this lets us identify an axiom for the element (if any) enumerated during stage \( s \) which satisfies both constraints. 


We will also treat \( \REset(A){i} \) as the result of an enumeration of axioms \( \axset<i>[s] \) and define an approximation \( \REset[s](A_s){i} \) that satisfies the following properties.   

\begin{property}\label{cond:building-re} \leavevmode
\begin{propenum}
\item \label{cond:building-re:odd-stages} No axioms are enumerated into \( \axset<i> \) at even stages. 
\item \label{cond:building-re:one-small} At any stage \( s \) there is at most one \( i \) and \( x \) such that an axiom enumerating some \( x \) into  \( \REset(A){i} \) is enumerated  and \( i, x < s \). 
\item \label{cond:building-re:use} If an axiom enumerating \( x \) into \( \REset(A){i} \) is enumerated at stage \( s \) then \( x \) is enumerated dependent on \( A_s\colrestr{l_s} \).   
\item \label{cond:building-re:approx} \( \REset[s](A_s){i} \) denotes the unique finite partial function with domain \( \omega\restr{l_s} \) that is \( \axset<i>[s] \)-supported.   
\item  \label{cond:building-re:true}  \( \REset(A){i} \) is the set built by applying the axioms  \( \axset<i> \) to the set \( A \).  
\end{propenum}
\end{property}

\Cref{cond:building-re:use} ensures that only axioms which agree with our current approximation \( A_s \) to \( A \)  are enumerated at stage \( s \) and are canceled by any change in \( A_s \).  Note, these properties are only possible to meet, as we will now verify, because of \cref{cond:building-A:infinite-correctness}.

\begin{lemma}\label{lem:re-enum-exists}
    For all \(i \) there is an  (uniformly specified) enumeration \( \axset[s]<i> \) and an effective approximation \( \REset[s](A_s){i} \) satisfying \cref{cond:building-re}.
\end{lemma}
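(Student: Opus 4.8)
The plan is to obtain $\REset(A){i}$ as a \emph{re-timed} copy of the standard presentation of the $i$-th set r.e.\ in $A$. First I would fix, uniformly in $i$, a computable set $B_i$ of axioms $\laxiom{\tau}{y}$ (with $\tau \in \bpfuncs$) such that applying $B_i$ to $A$ yields exactly the $i$-th set r.e.\ in $A$. I would enumerate all pairs $(i, \laxiom{\tau}{y})$ with $\laxiom{\tau}{y} \in B_i$ as an effective list of \emph{obligations} $o_0, o_1, \dots$, say $o_k$ is \emph{born} once its axiom has been enumerated into the relevant $B_i$, and call $o_k$ \emph{ready at stage $s$} if it is born, $i, y < s$, and $\tau \subfun A_s\colrestr{l_s}$. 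The enumeration of the $\axset<i>$ is then: nothing at even stages (this secures \cref{cond:building-re:odd-stages}); and at odd stages we dovetail so that each index $k$ is considered at infinitely many odd stages, where ``considering $k$ at stage $s$'' means: if $o_k$ is ready, put the single axiom $\laxiom{A_s\colrestr{l_s}}{y}$ — equivalently $\laxiom{\tau \union A_s\colrestr{l_s}}{y}$, since readiness gives $\tau \subfun A_s\colrestr{l_s}$ — into the $\axset<i>$ to which $o_k$ belongs, and otherwise do nothing. Since at most one axiom, of index and element below $s$, is placed at any stage, this gives \cref{cond:building-re:one-small}, and each such axiom is placed dependent on $A_s\colrestr{l_s}$ in the sense of \cref{def:dependent-on}, which is \cref{cond:building-re:use}. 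Finally I would define $\REset[s](A_s){i}$ by \cref{cond:building-re:approx} as the unique $\axset<i>[s]$-supported partial function with domain $\omega\restr{l_s}$ (legitimate and uniformly computable by \cref{rmk:unique-supported}), and $\REset(A){i}$ by \cref{cond:building-re:true} as the set built by applying $\axset<i>$ to $A$; all of this is uniform in $i$.

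The substance is then to verify that the set defined by \cref{cond:building-re:true} really is the $i$-th set r.e.\ in $A$ — without which the listed properties would be met trivially by the empty enumeration. One inclusion is automatic: any $\laxiom{\rho}{y} \in \axset<i>$ that applies to $A$ has $\rho = A_s\colrestr{l_s} \subfun A$ for the odd stage $s$ at which it was placed, and it was placed only on behalf of an obligation whose $\tau$ satisfies $\tau \subfun \rho \subfun A$, so $y$ belongs to the $i$-th set r.e.\ in $A$. For the converse, suppose $y$ enters the $i$-th set r.e.\ in $A$ via $\laxiom{\tau}{y} \in B_i$ with $\tau \subfun A$, and let $o_k$ be this obligation; I would invoke \cref{cond:building-A:infinite-correctness} to find infinitely many odd stages $s$ with $A_s\colrestr{l_s} \subfun A$. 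For all sufficiently large such $s$ we have $\tau \subfun A_s\colrestr{l_s}$ and $i, y < s$, so $o_k$ is ready, and since index $k$ is considered at infinitely many odd stages, at some such $s$ we place $\laxiom{A_s\colrestr{l_s}}{y}$ with $A_s\colrestr{l_s} \subfun A$ — an axiom that applies to $A$ — whence $y \in \REset(A){i}$.

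The step I expect to be the main obstacle is exactly the one forcing the appeal to \cref{cond:building-A:infinite-correctness}: because \cref{cond:building-re:use} makes every axiom ever placed into $\axset<i>$ agree with the (generally incorrect) current approximation $A_s$, such an axiom may never apply to the true set $A$ — so one cannot transcribe $B_i$ naively and must rely on the infinitely many stages with $A_s \subfun A$ to eventually place a companion axiom that does apply. Within this step the genuinely delicate point is that our axioms must be enumerated at \emph{odd} stages (\cref{cond:building-re:odd-stages}) while \cref{cond:building-A:infinite-correctness} makes no reference to parity; bridging this requires a short argument — using that no axioms enter $\axset$ at odd stages — that infinitely many of the stages with $A_s\colrestr{l_s} \subfun A$ can be taken odd. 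Everything else (the dovetailing schedule, the uniformity in $i$, and the well-definedness and computability of $\REset[s](A_s){i}$) is routine.
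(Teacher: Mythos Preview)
Your approach is essentially the paper's in spirit, but your choice of scheduling has a genuine gap. You dovetail so that each obligation index $k$ is considered at a fixed infinite set of odd stages, and then argue that since there are also infinitely many odd stages with $A_s \subfun A$ (via \cref{cond:building-A:infinite-correctness}), at some stage both hold. But two infinite subsets of $\omega$ can be disjoint: the set $\{s : \text{$k$ is considered at $s$}\}$ is determined in advance by your dovetailing and may never meet the set of good stages, so no axiom for $o_k$ that actually applies to $A$ need ever be placed. Your sentence ``since index $k$ is considered at infinitely many odd stages, at some such $s$ we place $\laxiom{A_s\colrestr{l_s}}{y}$ with $A_s\colrestr{l_s} \subfun A$'' silently conflates these two infinite sets.

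The paper avoids this by replacing the fixed round-robin with a priority search: at each odd stage $s$ it finds the \emph{least} $t$ carrying an original axiom $\laxiom{\sigma}{y} \in \widehat{\axset<i>[t]}$ with $i,y,\lh{\sigma} < s$, $\sigma \subfun A_{s-1}$, and $\REset[s-1](A_{s-1}){i}(y)=0$, and places $\laxiom{A_s}{y}$ for that one. The last check guarantees that at any good odd stage (one with $A_{s-1} \subfun A_s \subfun A$) an obligation already holding a good axiom is skipped, so the least still-needing obligation is serviced with an axiom that applies to $A$; since there are infinitely many good stages, every obligation is eventually serviced. Your argument is repaired verbatim by swapping in this scheduling (or any scheme that at each stage picks the least ready, not-yet-satisfied obligation rather than a preassigned one); everything else you wrote --- including your careful flagging of the parity issue --- is correct.
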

\begin{proof}
We can assume we start with an enumeration of axioms \(  \widehat{\axset[s]<i>} \) such that \( x \in \REset(A){i} \) iff \( \exists(\laxiom{\sigma}{x} \in \axset<i>)\left( \sigma \subfun A \right) \)  and satisfies the usual rules for the enumeration of an \( A \)-\re set such as the enumeration of at most one element a stage etc.   We now define  \( \axset[s]<i> \) in terms of the sets \( \axset[s - 1]<i> \), \(i < s \).    

If \( s \) is even we do nothing. So assume \( s \) is odd and let \( t < s \) be the minimal value such that we can find \( i, y < s, \lh{\sigma} < s \) such that  \( \laxiom{\sigma}{y} \in \widehat{\axset[t]<i>} \), \( \sigma \subfun A_{s-1}  \) and \(  \REset[s-1](A_{s-1}){i}(y) = 0 \).  If no such values can be found do nothing.  If they are found enumerate the axiom \( \laxiom{A_s}{y} \) into \( \axset[s]<i> \).   By construction, we clearly satisfy \cref{cond:building-re:odd-stages,cond:building-re:one-small,cond:building-re:use,cond:building-re:approx}.  It remains to show that \cref{cond:building-re:true} holds.  By \cref{cond:building-A:infinite-correctness,cond:building-A:even-stages} we can assume that infinitely often we have an odd stage \( s \) with \( A_{s-1} \subfun A_s \subfun A \), ensuring that eventually every element in  \( \REset(A){i} \) gets an axiom enumerating it.  
\end{proof}

We now give an overview of how each requirement operates.  

\subsection{Overview of \( \req*{R}{j,e} \)}
\label{sec:overview:req-R}

The basic approach to meeting  \req{R}{j,e} is to use the fact that \( A \) is \( \REA[3] \) to change our minds about the behavior of some initial segment of \( A \) more times than \( X_e \) is able to track. We now give an informal description of the process which we depict graphically in \cref{fig:single-R-req}.  In the figure stages at which a well-behaved back and forth computation exists are indicated by shaded/hatched regions. The region of \( A \) which is either hatched or shaded indicates the use of a computation of the shaded region of \( X_e \), which in turn extends the use of a computation of the shaded region of \( A \).  We describe such computations as well-behaved when (at stage \( s \)) an \( s \)-supported region of  \( A \) (hatched and shaded) computes (via \( \recfnl{j}{}{} \))  \( X_e \) on (at least) an  \( s \)-supported region (shaded region) which in turn computes (via \( \recfnl{j}{}{} \))  \( A(\pair{3}{c}) \).  We leave cells blank if empty --- unless they've just been canceled by an enumeration into a prior column, in which case we place a \( 0 \) in the cell.  Finally, we circle locations which conflict with a commitment made by \( \recfnl{j}{}{} \) at a prior stage.  

\begin{figure}
    \centering
    \begin{tikzpicture}
        \REAmatrix(x)<{\( X_e \) }>{4}{2}
        \REAmatrix(m)<{\( A \) }>[right = .05cm of x]{4}{3}
        \LineLabelCell[below right]{4-3}{\( c \) }
        \node [below=1cm,xshift=1cm] {Stage \( s_{-1} \)};
    \end{tikzpicture} 
    \begin{tikzpicture}
        \REAmatrix(x)<{\( X_e \) }>{4}{2}
        \REAmatrixFillRect<fill=LightGray>{4-1}{4-2}
        \REAmatrixFillRect<fill=LightGray>{3-1}{3-1}
        \REAmatrix(m)<{\( A \) }>[right = .05cm of x]{4}{3}
        \REAmatrixFillRect<fill=LightGray>{4-1}{4-3}
        \REAmatrixFillRect<pattern=north east lines>{3-1}{3-3}
        \node [below=1cm,xshift=1cm] {Stage \( s_{0} \) };
    \end{tikzpicture} \,
        \begin{tikzpicture}
        \REAmatrix(x)<{\( X_e \) }>{4}{2}
        \REAmatrixEnum{4-2}
        \REAmatrix(m)<{\( A \) }>[right = .05cm of x]{4}{3}
        \REAmatrixEnum{4-3}
        \REAmatrixCircle{4-3}
        \node [below=1cm,xshift=1cm] {Stage \( s_{0} +1 \)};
    \end{tikzpicture} \,
    \begin{tikzpicture}
        \REAmatrix(x)<{\( X_e \) }>{4}{2}
        \REAmatrixFillRect<fill=LightGray>{3-1}{4-2}
        \REAmatrixFillRect<fill=LightGray>{2-1}{2-1}
        \REAmatrixEnum{3-2}
        \REAmatrixEnum{4-2}
        \REAmatrix(m)<{\( A \) }>[right = .05cm of x]{4}{3}
        \REAmatrixEnum{4-3}
        \REAmatrixFillRect<fill=LightGray>{4-1}{4-3}
        \REAmatrixFillRect<pattern=north east lines>{2-1}{3-3}
        \node [below=1cm,xshift=1cm] {Stage \( s_{1} \)};
    \end{tikzpicture}\\
        \begin{tikzpicture}
        \REAmatrix(x)<{\( X_e \) }>{4}{2}
        \REAmatrixEnum{3-2}
        \REAmatrixEnum{4-2}
        \REAmatrix(m)<{\( A \) }>[right = .05cm of x]{4}{3}
        \REAmatrixEnum{2-2}
        \REAmatrixEnum[0]{4-3}
        \REAmatrixCircle{4-3}
        \node [below=1cm,xshift=1cm] {Stage \( s_{1} +1 \)};
    \end{tikzpicture} \; \;
        \begin{tikzpicture}
        \REAmatrix(x)<{\( X_e \) }>{4}{2}
        \REAmatrixFillRect<fill=LightGray>{4-1}{4-2}
        \REAmatrixFillRect<fill=LightGray>{3-1}{3-1}
        \REAmatrixEnum{4-2}
        \REAmatrixEnum[0]{3-2}
        \REAmatrixEnum{2-1}

        \REAmatrix(m)<{\( A \) }>[right = .05cm of x]{4}{3}
        \REAmatrixFillRect<fill=LightGray>{4-1}{4-3}
        \REAmatrixFillRect<pattern=north east lines>{3-1}{3-3}
        \REAmatrixEnum{2-2}
        \node [below=1cm,xshift=1cm] {Stage \( s_{2} \) };
    \end{tikzpicture} \; \;
    \begin{tikzpicture}
          \REAmatrix(x)<{\( X_e \) }>{4}{2}
        \REAmatrixEnum{2-1}
        \REAmatrixCircle{2-1}
        \REAmatrixEnum{4-2}
        \REAmatrix(m)<{\( A \) }>[right = .05cm of x]{4}{3}
        \REAmatrixEnum{4-3}
        \REAmatrixEnum[0]{2-2}
        \REAmatrixEnum{1-1}
        \LineLabelCell[right]{2-3}{\( s_1 \) comp applies}
        \LineLabelCell<white>[right]{3-3}{perm disagree w/}
        \LineLabelCell<white>[right]{4-3}{circled location}
        \node [below=1cm,xshift=1cm] {Stages \( s > s_2 \)};
    \end{tikzpicture}

    \caption{Meeting \req*{R}{j,e}}
    \label{fig:single-R-req}
\end{figure}

  We pick a value \( c \) at some stage \( s_{-1} \)  which we hold out of \( \setcol{A}{3} \).  If we never see a stage \( s_0 \) and a well-behaved computation from  \( A \) to  \( X_e \) and back then we are done.   If we do see such a well-behaved computation then at stage \( s_0 +1 \)  we enumerate \( c \) into \( \setcol{A}{3} \) dependent on some large value staying out of \( \setcol{A}{2} \).      

If at some stage \( s_1 > s_0 +1 \) we again see a well-behaved back and forth computation   (meaning \( X_{e,s_1} \) disagrees with \( X_{e,s_0} \) on the region shaded at stage \( s_0 \)) we enumerate an element \( b \)  into \( \setcol{A}{2} \) canceling \( c \) from  \( A_{s_1 +1} \).  This restores the computation from \( A \) to \( X_e \) seen at stage \( s_0 \).  Note that the only way we can again see a well-behaved back and forth computation (given we restrain enumeration into \( A \)) is if our approximation to \( X_e \) restores the (used part of the) state it had at \( s_0 \)  by enumerating an element into \( \setcol{X_e}{1} \) to cancel any changes.          

Now suppose that at some stage \( s_2 \) we again see a well-behaved back and forth computation.        We  respond by enumerating an element into \( \setcol{A}{1} \) at stage \( s_2 +1 \)  canceling the enumeration made at stage \( s_1 \) into \( \setcol{A}{2} \).  This restores the computation from \( A \) to \( X_e \) seen at stage \( s_1 \).   But now \( X_e \) is unable to match this change as it can't cancel the element enumerated into~\( \setcol{X_e}{1} \) and, as we will prove in \cref{sec:overview:req-R:positive-change-property}, the fact that \( A \) computed an \( s \)-supported approximation to \( X_e \) at \( s_1 \) means this element was predicted by \( A \) to be out of \( X_e \) at \( s_1 \).  This ensures that, provided we restrain modifications to \( A \), we never again see a well-behaved \( \recfnl{j}{} \) computation from \( A \) to \( X_e \) and back, guaranteeing that \( \req{R}{j,e} \) is satisfied.

We now formalize the argument made in this sketch and demonstrate that if the equivalence of \( A \) and \( X_e \) is witnessed by \( \recfnl{j}{} \) then we will see infinitely many computations we've been calling well-behaved.    

\subsubsection{Positive Change Property}
\label{sec:overview:req-R:positive-change-property}

We now show that \( \REA \) sets have the following positive change property.  That is, the only way an approximation to an \( \REA \) set can change is by enumerating a new element.  We prove the results in this section for  \( \REA[n] \) sets for arbitrary \( n \), as this result will help us understand the strategy used to satisfy  \( \req{P}{i} \). 

\begin{lemma}\label{lem:positive-change-property}
    Suppose \( \chi, \chi' \) are \( s, s' \)-supported approximations to an \( \REA(Z)[n] \) set \( Y \) with \( s < s' \) and \( \dom \chi' \supseteq \dom \chi \).  If \( \chi(\pair{m}{x}) = 1 \) but \( \chi'(\pair{m}{x}) =0 \) then \( \chi \incompat[m-1] \chi' \).  Moreover, there is a \( y \) and  \( m' < m \) with \( \chi(\pair{m'}{y}) = 0 \) but \( \chi'(\pair{m'}{y}) = 1 \).
\end{lemma}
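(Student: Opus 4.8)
The plan is to argue by induction on the column index \( m \), proving the two assertions together. Before the induction I would note that the hypothesis is vacuous unless \( 0 < m \leq n \): by \cref{def:supported}, if \( m = 0 \) then \( \chi(\pair{0}{x}) = Z(x) = \chi'(\pair{0}{x}) \) (both points lie in \( \dom\chi \subseteq \dom\chi' \)), and if \( m > n \) then \( \chi(\pair{m}{x}) = 0 \); either way \( \chi(\pair{m}{x}) = 1 \land \chi'(\pair{m}{x}) = 0 \) fails. In particular this disposes of the base case.

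For the inductive step, fix \( m \) with \( 0 < m \leq n \) and assume \( \chi(\pair{m}{x}) = 1 \), \( \chi'(\pair{m}{x}) = 0 \). Since \( \chi \) is \( \axset[s] \)-supported there is an axiom \( \laxiom{\sigma}{\pair{m}{x}} \in \axset[s] \) with \( \sigma \subfun \chi \); by \cref{def:re-axiom}, \( \dom\sigma \subseteq \setcol{\omega}{<m} \). Because \( s < s' \) the enumeration is monotone, so \( \laxiom{\sigma}{\pair{m}{x}} \in \axset[s'] \); and since \( \chi' \) is \( \axset[s'] \)-supported with \( \chi'(\pair{m}{x}) = 0 \), no axiom for \( \pair{m}{x} \) in \( \axset[s'] \) has its antecedent below \( \chi' \), so \( \sigma \nsubfun \chi' \). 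As \( \dom\sigma \subseteq \dom\chi \subseteq \dom\chi' \), there must be a point \( p = \pair{m'}{y} \in \dom\sigma \) with \( \chi(p) = \sigma(p) \neq \chi'(p) \), and \( m' < m \) since \( \dom\sigma \subseteq \setcol{\omega}{<m} \). This is precisely \( \chi \incompat[m-1] \chi' \).

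It remains to extract a flip of the correct polarity. If \( \chi(p) = 0 \) and \( \chi'(p) = 1 \) we are done with \( (m', y) \). Otherwise \( \chi(p) = 1 \) and \( \chi'(p) = 0 \); here \( m' \geq 1 \), since a column-\( 0 \) disagreement is impossible (\( \chi \) and \( \chi' \) both agree with \( Z \) on column \( 0 \)). Applying the induction hypothesis to \( p \) in place of \( \pair{m}{x} \) — legitimate since \( m' < m \) and \( p \in \dom\chi \subseteq \dom\chi' \) — yields \( m'' < m' \) and \( y' \) with \( \chi(\pair{m''}{y'}) = 0 \) and \( \chi'(\pair{m''}{y'}) = 1 \), and \( m'' < m' < m \) completes the step.

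The only real subtlety is recognizing that the induction is on the column index rather than on the stage, and that the ``moreover'' clause is exactly what forces the recursion: a single witnessing disagreement need not have the right polarity, but chasing it down through the columns — using that column \( 0 \) is pinned to \( Z \) — terminates with one that does. I expect the fussiest part to be keeping the domain inclusions \( \dom\sigma \subseteq \dom\chi \subseteq \dom\chi' \) straight so that ``\( \sigma \subfun \chi \) and \( \sigma \nsubfun \chi' \)'' genuinely produces a point of disagreement, but this is bookkeeping rather than a real obstacle.
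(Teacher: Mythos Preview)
Your proof is correct and follows essentially the same route as the paper: locate the axiom witnessing \( \chi(\pair{m}{x})=1 \), use monotonicity of the enumeration to see it persists at stage \( s' \), extract a disagreement in some column \( m'<m \), and chase the polarity down by induction. The only cosmetic difference is that the paper proves the incompatibility clause once, directly (no induction needed there), and then runs the induction for the ``moreover'' clause separately, starting at \( m=2 \) with the observation that column~\( 1 \) is r.e.; you instead fold both clauses into a single induction on \( m \).
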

\begin{proof}
To establish the first claim suppose that the hypotheses of the lemma are satisfied. By \cref{def:supported} there is some axiom \( \laxiom{\sigma}{y} \in \axset[s](Y) \) with \( y = \pair{m}{x} \) and \( \sigma  \subfun \chi \).  As \( \dom \sigma \subset \setcol{\omega}{m-1} \) if \( \chi' \compat[m-1] \chi  \) then \( \sigma \subfun \chi' \) as \( \dom \chi' \supseteq \dom \chi \).  But then by \cref{def:supported} we would have  \( \chi'(y) = 1 \), contrary to the assumptions of the lemma.

We now prove the second claim by induction.  As \( \setcol{Y}{1} \) is \re, the claim obviously holds for \( m=2 \).  Now suppose the claim holds for any \( m' \leq m \) and that the hypotheses of the lemma are satisfied for \( m+1 \).  Thus, \( \chi \incompat[m] \chi' \).  If there is some \( y = \pair{m'}{x} \) with \( m' < m+1 \) and \( \chi(y) =0 \) and \( \chi'(y) = 1 \), we are done.  If not there must be a \( y \) with \( \chi(y) =1 \) and \( \chi'(y) = 0 \).  The result now follows by the inductive hypothesis applied to \( y = \pair{m'}{x} \).
\end{proof}

We now give a slightly modified version of the above lemma that is specifically phrased in terms of undoing a previous change.  

\begin{lemma}\label{lem:return-forces-change}
    Suppose \( \chi, \chi', \chi'' \) are \( s, s', s'' \)-supported approximations to \( \REAop(Z){e}{n} \) with \( s < s' < s'' \), \( \chi'' \supfun \chi \) and \( \chi' \incompat[m] \chi  \) then \( \chi'' \incompat[m-1] \chi' \).  
\end{lemma}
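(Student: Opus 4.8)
The plan is to derive this as a quick consequence of the positive change property (\cref{lem:positive-change-property}), using in addition the monotonicity of the axiom enumeration ($\axset[s] \subseteq \axset[t]$ whenever $s \le t$) and the fact — which holds for all the supported approximations we actually compare, since they have domains of the form $\omega\colrestr{l}$ with $l$ nondecreasing in the stage — that $\dom \chi \subseteq \dom \chi' \subseteq \dom \chi''$. Recall that $\chi'' \supfun \chi$ means $\chi''$ extends $\chi$ as a partial function.

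First I would fix a witness to $\chi' \incompat[m] \chi$: a pair $\pair{k}{x}$ with $k \le m$ at which $\chi$ and $\chi'$ are both defined and take different values, chosen with $k$ minimal. Since $\chi(\pair{0}{y}) = Z(y) = \chi'(\pair{0}{y})$ whenever both sides are defined, $k \ge 1$. Because $\chi \subfun \chi''$ and $\pair{k}{x} \in \dom \chi$, we have $\chi''(\pair{k}{x}) = \chi(\pair{k}{x}) \ne \chi'(\pair{k}{x})$, so this same pair already witnesses $\chi'' \incompat[k] \chi'$. If $k \le m-1$ this is exactly $\chi'' \incompat[m-1] \chi'$ and we are done, so from now on assume $k = m$.

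The work is to handle the column-$m$ disagreement. Suppose first $\chi(\pair{m}{x}) = 1$; by \cref{def:supported} there is an axiom $\laxiom{\sigma}{\pair{m}{x}} \in \axset[s]$ with $\sigma \subfun \chi$ and $\dom \sigma \subseteq \setcol{\omega}{<m}$. Since $s < s'$ this axiom is in $\axset[s']$; since $k=m$ is minimal, $\chi$ and $\chi'$ agree on $\setcol{\omega}{<m}$ wherever both are defined, and since $\dom\sigma \subseteq \dom\chi \subseteq \dom\chi'$ this gives $\sigma \subfun \chi'$, whence $\chi'(\pair{m}{x}) = 1$ by \cref{def:supported} — contradicting our choice of $\pair{m}{x}$. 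Hence $\chi(\pair{m}{x}) = 0$ and $\chi'(\pair{m}{x}) = 1$, so (using $\chi \subfun \chi''$ again) $\chi''(\pair{m}{x}) = 0$. Now $\chi'$ and $\chi''$ are $s'$- and $s''$-supported approximations to the same set with $s' < s''$ and $\dom\chi'' \supseteq \dom\chi'$, and $\chi'(\pair{m}{x}) = 1$ while $\chi''(\pair{m}{x}) = 0$, so an application of \cref{lem:positive-change-property} to the pair $(\chi', \chi'')$ yields $\chi' \incompat[m-1] \chi''$, i.e.\ $\chi'' \incompat[m-1] \chi'$, as required.

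The point I expect to need the most care is the bookkeeping around the stage ordering $s < s' < s''$: it is essential that the approximation extended by $\chi''$ is the \emph{earliest} one, $\chi$, because we need any axiom that witnesses a $1$ in $\chi$ to be available already at stages $s'$ and $s''$ — this is what powers the contradiction in the first case and forces us to invoke \cref{lem:positive-change-property} on $(\chi', \chi'')$ in the second (the ordering rules out applying it to $(\chi'', \chi')$). The nesting of domains is the only other thing to keep track of, and it is harmless since it is part of the setup under which supported approximations are actually compared.
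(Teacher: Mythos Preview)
Your proof is correct and takes essentially the same approach as the paper: both reduce to \cref{lem:positive-change-property} by case analysis on which of $\chi,\chi'$ equals $1$ at a point of disagreement, applying the lemma to $(\chi',\chi'')$ in the case $\chi'=1,\chi''=0$. The paper's version is terser---it handles the domain issue by restricting $\chi'$ to $\dom\chi$ rather than adding the nesting hypothesis, and in the $\chi=1$ case it cites \cref{lem:positive-change-property} for the pair $(\chi,\chi')$ directly rather than rederiving the axiom argument---but the substance is identical.
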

\begin{proof}
Note that WLOG we may assume \( \dom \chi'' \supset \dom \chi' \) by restricting \( \chi' \) to the domain of \( \chi \).   With this assumption made, the pair \( \chi, \chi' \) satisfy the hypotheses of \cref{lem:positive-change-property} or \( \chi', \chi'' \) do.  As \( \chi'' \supfun \chi \) whichever way we have \( \chi'' \incompat[m-1] \chi' \).
\end{proof}

We now define a predicate which holds at \( x,s \) if it appears that \( A_s \) computes enough of \( X_e \) for \( X_e \) to correctly compute the value of \( A_s(x) \).  Note that, in what follows, we will make use of an approximation \( X_{e,s} \) to \( X_e \).  However, somewhat surprisingly, all that matters about this approximation is that \( X_{e,s} \) be \( s \)-supported and that, infinitely often, it's domain includes any finite subset.  As such, we can simply take \( X_{e,s} \) to be the unique \( s \)-supported approximation such that \( \dom \setcol{X_{e,s}}{1} = \dom \setcol{X_{e,s}}{2} = l_s  \) even if this approximation isn't even guaranteed to be (pointwise) infinitely often correct.

\begin{definition}\label{def:backforth-witness}
    We define \( \SelfComp[j,e]{x}{s} \) (written \( \SelfComp{x}{s} \) when \( j,e \) is clear from context) to hold just when there is some \( s \)-supported approximation \( \xi  \) to \( X_e \) satisfying
    \begin{itemize}
        \item  \( \recfnl[s]{j}{A_s}{} \supfun \xi \) 
        \item \(\recfnl[s]{j}{\xi}{} \supfun A_s\restr{x+1} \)
    \end{itemize}
\end{definition}

We note that if \( \req{R}{j,e} \) fails we can always wait for a stage at which our approximation witnesses this failure.

\begin{lemma}\label{lem:inf-many-good-comps}
   If \( \recfnl{j}{A}{} = X_e \land \recfnl{j}{X_e}{} = A \),  \( A_t \subfun A \)  then for any \( x \) there are infinitely many stages at which \( \SelfComp{x}{s} \) holds and is witnessed by \( A_s, \xi \) where  \(A \supfun A_s \supfun A_t \).
\end{lemma}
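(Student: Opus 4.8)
The plan is to exploit that, under the hypothesis, \( \recfnl{j}{A}{} = X_e \) and \( \recfnl{j}{X_e}{} = A \) are total correct computations, so each uses only finitely much of its oracle, and to run the back-and-forth starting from a \emph{correct} finite approximation to \( A \) extending \( A_t \). Since the conclusion only asserts that infinitely many stages work, fix \( x \) and an arbitrary stage \( s_0 \); it suffices to produce one witnessing \( s > s_0 \).

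First I would isolate the finite fragment of \( X_e \) that drives the backward computation and upgrade it to an honestly supported approximation. By the use principle applied to \( \recfnl{j}{X_e}{}\restr{x+1} = A\restr{x+1} \) there is a finite \( D_0 \) with \( \recfnl{j}{X_e\restr{D_0}}{} \supfun A\restr{x+1} \). Enlarge \( D_0 \) to a finite \( D \) closed under the dependency structure of \( \axset(X_e) \): whenever \( \pair{m}{y} \in D \) and \( \pair{m}{y} \in X_e \), add to \( D \) the domain of some \( \laxiom{\sigma}{\pair{m}{y}} \in \axset(X_e) \) with \( \sigma \subfun X_e \). Since a \REA[2] axiom for \( \pair{m}{y} \) has \( \dom\sigma \subset \setcol{\omega}{<m} \) and columns \( 0 \) and \( >2 \) of \( X_e \) are empty, this closure terminates after finitely many rounds. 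Put \( \xi_\ast = X_e\restr{D} \). I claim that for all sufficiently large \( s \) the function \( \xi_\ast \) is \( \axset[s](X_e) \)-supported, hence an \( s \)-supported approximation to \( X_e \). Once the finitely many axioms used in forming \( D \) lie in \( \axset[s](X_e) \): if \( \pair{m}{y} \in D \) and \( \xi_\ast(\pair{m}{y}) = 1 \), i.e. \( \pair{m}{y} \in X_e \), the closure of \( D \) supplies \( \laxiom{\sigma}{\pair{m}{y}} \in \axset[s](X_e) \) with \( \dom\sigma \subseteq D \) and \( \sigma \subfun X_e \), so \( \sigma \subfun \xi_\ast \); conversely any \( \laxiom{\sigma}{\pair{m}{y}} \in \axset[s](X_e) \) with \( \sigma \subfun \xi_\ast \) has \( \sigma \subfun X_e \), forcing \( \pair{m}{y} \in X_e \); and the \( m=0 \) and \( m>2 \) clauses of \cref{def:supported} hold outright. (Equivalently, for large \( s \), \( \xi_\ast \) is the unique \( \axset[s](X_e) \)-supported function with domain \( D \) furnished by \cref{rmk:unique-supported}.) Let \( s_1 \) be the first stage past which this holds. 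I expect this step --- arranging that the witness \( \xi \) can be taken genuinely supported rather than merely a correct finite fragment of \( X_e \) --- to be the one real subtlety; the rest is bookkeeping with the use principle and \cref{cond:building-A:infinite-correctness}.

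Next, applying the use principle to \( \recfnl{j}{A}{}\restr{D} = X_e\restr{D} = \xi_\ast \) yields a finite \( \sigma_0 \subfun A \) with \( \recfnl{j}{\sigma_0}{}\restr{D} = \xi_\ast \); let \( s_2 \) be a stage by which both this computation and \( \recfnl{j}{\xi_\ast}{}\restr{x+1} = A\restr{x+1} \) have settled. By \cref{cond:building-A:infinite-correctness}, together with the fact (clear from the construction, since \( l_s \to \infty \)) that \( \lim_s \dom A_s = \omega \), there are infinitely many \( s \) with \( A_s \subfun A \) and \( \dom A_s \supseteq \dom A_t \cup \dom\sigma_0 \cup D \); fix one with \( s > \max(s_0, s_1, s_2) \). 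As \( A_t, \sigma_0, A_s \) are all subfunctions of \( A \) and \( \dom A_s \supseteq \dom A_t \cup \dom\sigma_0 \), we get \( A_s \supfun A_t \) and \( A_s \supfun \sigma_0 \), so \( A \supfun A_s \supfun A_t \). Since \( A_s \) agrees with \( A \) on \( \dom\sigma_0 \) and \( s \geq s_2 \), the stage-\( s \) computation \( \recfnl[s]{j}{A_s}{} \) converges on \( D \) to \( \xi_\ast \), so \( \recfnl[s]{j}{A_s}{} \supfun \xi_\ast \); likewise \( \recfnl[s]{j}{\xi_\ast}{} \) converges on \( \{0,\ldots,x\} \) to \( A\restr{x+1} \), and \( A_s\restr{x+1} \subfun A\restr{x+1} \), so \( \recfnl[s]{j}{\xi_\ast}{} \supfun A_s\restr{x+1} \). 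Both clauses of \cref{def:backforth-witness} are met by \( A_s, \xi_\ast \), so \( \SelfComp{x}{s} \) holds at this \( s \) with \( A \supfun A_s \supfun A_t \); as \( s_0 \) was arbitrary, infinitely many stages witness the lemma.
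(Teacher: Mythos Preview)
Your proof is correct and follows essentially the same route as the paper's. Both arguments reduce to producing a single witnessing stage (you by fixing an arbitrary threshold $s_0$, the paper by iterating from $A_t$ to $A_s$), build a finite supported approximation $\xi \subfun X_e$ adequate for the backward computation, then use \cref{cond:building-A:infinite-correctness} to find a correct stage $s$ with $A_s$ large enough. The one place you add genuine content is the explicit closure-under-axioms construction of $\xi_\ast$; the paper simply asserts that such a supported $\xi$ exists, whereas you verify it by closing $D_0$ downward through the axiom dependencies (which terminates in at most two rounds since $X_e$ is \REA[2]). That care is well placed---it is exactly the point where ``finite fragment of $X_e$'' must be upgraded to ``$s$-supported''---and your verification of both directions of the support biconditional is clean.
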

\begin{proof}
Suppose \( A_t \) is as in the lemma.  We first note that it is enough to show one such \( s > t \) exists, since further witnesses may be generated by applying this result.  Moreover,  we may presume that \( x \in \dom A_t \) since, as the approximation to \( A \) is infinitely often correct,  we can simply wait for some later stage at which \( x \in \dom A_t \) and \( A_t \subfun A \) then argue as below.  

Since the reductions are total we can find \( \xi, \chi \) such that \( \xi \subfun X_e \) is an \( \axset(X_e) \) supported approximation to \( X_e \) satisfying \( \forall(y \leq x)\left(\recfnl{j}{\xi}{y} = A(y) = A_t(y)\right) \) and  \( \chi  \) is an \( \axset \) supported approximation to \( A \) satisfying  \( A \supfun \chi  \supfun A_t \) with \( \recfnl{j}{\chi}{} \supfun \xi \).  Now let \( t' > t \) be large enough that every axiom needed to ensure \( \xi, \chi \) are \( \axset(X_e), \axset \) supported has been enumerated by stage \( t' \) , \( \recfnl[t']{j}{\xi}{x} = A(x) = A_t(x) \), \( \recfnl[t']{j}{\chi}{} \supfun \xi \)  and the domain of \( \xi \) is contained in \( \set{\pair{l}{x}}{l, x \leq t'} \).  Finally, 
choose \( s > t' \) so that \( A_s \subfun A \) and \( \dom A_s \supseteq \dom \chi \union \dom A_t \) which guarantees \( A_s \supfun \chi \).  But now note that all the conjuncts in  \cref{def:backforth-witness} are satisfied so \( \SelfComp{x}{t} \) holds.
\end{proof}

Using this definition we can make precise the idea of an approximation changing its mind.

\begin{definition}\label{def:k-flipflop}
    An increasing sequence of \( k+1 \)  stages \( s_0, s_1, \ldots, s_k \) is \( k \)-flipflopping (for \( \recfnl{j}{}{}, X_e \)) at \( x \)  denoted \( \SelfComp<k>[j,e]{x}{s_0, s_1, \ldots, s_k} \)  just if
    \begin{enumerate}
        \item\label{def:k-flipflop:change} \( \forall(l \leq k -1 )\left( A_{s_l}(x) \neq A_{s_{l+1}}(x) \right) \)
         \item\label{def:k-flipflop:computation} \( \forall(l \leq k -1)\left( \SelfComp[j,e]{x}{s_l} \right) \) is witnessed by some approximation \( \xi_l \) to \( X_e \).
        \item\label{def:k-flipflop:alternation} \( \forall(l \leq k - 2)\left( A_{s_l} \subfun A_{s_{l+2}} \right) \)
    \end{enumerate}

    An element \( x \) is \( k \)-flipflopping at stage \( s \) (for \( X_e, \recfnl{j}{}{} \)), denoted  \( \SelfComp<k>[j,e]{x}{s} \) just if there is an increasing sequence of stages \( s_0, s_1, \ldots, s_{k-1}, s \) such that  \( \SelfComp<k>{x}{s_0, s_1, \ldots, s_{k-1}, s} \).  

    When \( j,e \) are clear from context we will omit mentioning them.  
\end{definition}

We can now provide the framework for diagonalizing against \( X_e \).

\begin{lemma}\label{lem:max-flip-flops}
If \( \SelfComp<3>[j,e]{x}{s} \) then \( \SelfComp[j,e]{x}{s} \) fails to hold.    
\end{lemma}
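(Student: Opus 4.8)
The plan is to argue by contradiction. Suppose \( \SelfComp<3>[j,e]{x}{s} \) holds, witnessed by an increasing sequence \( s_0 < s_1 < s_2 < s_3 = s \) as in \cref{def:k-flipflop}, and suppose toward a contradiction that \( \SelfComp[j,e]{x}{s} \) holds as well; fix \( s_l \)-supported approximations \( \xi_l \) to \( X_e \) (\( l \leq 3 \)) with \( \recfnl[s_l]{j}{A_{s_l}}{} \supfun \xi_l \) and \( \recfnl[s_l]{j}{\xi_l}{x} = A_{s_l}(x) \) (\cref{def:backforth-witness}). First I would reduce to the case \( x = \pair{3}{c} \). If \( x \) lies in column \( 0 \) then \( A_s(x) = 0 \) for all \( s \); if \( x \) lies in column \( 1 \) then \( A_s(x) \) never changes from \( 1 \) to \( 0 \) as \( s \) grows (its witnessing axiom depends only on the empty column \( 0 \)). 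In either case \( A_s(x) \) cannot alternate twice, so no \( 3 \)-flipflop exists. If \( x = \pair{2}{c} \), then, taking \( A_{s_0}(x) = 0 \) (the parity \( A_{s_0}(x) = 1 \) being symmetric), the flip \( A_{s_1}(x) = 1 \to A_{s_2}(x) = 0 \) forces by \cref{lem:positive-change-property} an enumeration into column \( 1 \) of \( A \), so \( A_{s_1} \incompat[1] A_{s_2} \); then \cref{lem:return-forces-change} applied to \( A_{s_1}, A_{s_2}, A_{s_3} \) --- using \( A_{s_1} \subfun A_{s_3} \) from \cref{def:k-flipflop:alternation} --- gives \( A_{s_3} \incompat[0] A_{s_2} \), which is impossible as column \( 0 \) of \( A \) is empty. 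So \( x = \pair{3}{c} \); since \( A_{s_0} \subfun A_{s_2} \) gives \( A_{s_2}(x) = A_{s_0}(x) \) and \( A_{s_1} \subfun A_{s_3} \) gives \( A_{s_3}(x) = A_{s_1}(x) \), the values \( A_{s_l}(x) \) run \( 0,1,0,1 \) (the parity \( 1,0,1,0 \) being handled the same way).

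Next I would extract the enumeration cascade on the \( A \)-side. Applying \cref{lem:positive-change-property} to the flip \( A_{s_1}(x) = 1 \to A_{s_2}(x) = 0 \) yields a node \( \pair{p}{d} \) with \( p < 3 \), \( A_{s_1}(\pair{p}{d}) = 0 \), \( A_{s_2}(\pair{p}{d}) = 1 \); since \( A_{s_1} \subfun A_{s_3} \) we also get \( A_{s_3}(\pair{p}{d}) = 0 \), so \( \pair{p}{d} \) leaves \( A \) between \( s_2 \) and \( s_3 \). As column \( 1 \) of \( A \) is \re this is impossible for \( p = 1 \), so \( p = 2 \), and a second application of \cref{lem:positive-change-property} produces a genuine enumeration into column \( 1 \) of \( A \) between \( s_2 \) and \( s_3 \). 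Thus the three flips correspond to responses at column \( 3 \), then column \( 2 \), then column \( 1 \) --- precisely the cascade on which the diagonalization for \( \req{R}{j,e} \) rests --- and these exhaust the three nontrivial columns available to a \( \REA[3] \) set.

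The heart of the argument is the analogous analysis on \( X_e \), which only has two nontrivial columns. From \( A_{s_0} \subfun A_{s_2} \) and monotonicity of the functional we get \( \xi_0, \xi_2 \subfun \recfnl[s_2]{j}{A_{s_2}}{} \), hence \( \xi_0 \compat \xi_2 \); likewise \( \xi_1 \subfun \recfnl[s_3]{j}{A_{s_3}}{} \), so \( \xi_1 \compat \xi_3 \). Since \( \recfnl[s_l]{j}{\xi_l}{x} \) runs \( 0,1,0,1 \), consecutive \( \xi_l \) disagree on the use of the back-computation, and because \( \xi_0 \compat \xi_2 \) the disagreement between \( \xi_1 \) and \( \xi_2 \) is a \emph{return} of \( X_e \)'s approximation, on the used region, to its state under \( \xi_0 \). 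Invoking \cref{lem:positive-change-property} and \cref{lem:return-forces-change} for the \( \REA[2] \) set \( X_e \), this return cannot be realized inside column \( 2 \) alone: it forces an enumeration into column \( 1 \) of \( X_e \), i.e.\ a node \( \pair{1}{d'} \) with \( \xi_1(\pair{1}{d'}) = 0 \) and \( \xi_2(\pair{1}{d'}) = 1 \) lying in the relevant use. But column \( 1 \) of \( X_e \) is \re and so the witnessing axiom for \( \pair{1}{d'} \) has already appeared by stage \( s_2 \); hence every \( s_3 \)-supported approximation whose domain contains \( \pair{1}{d'} \) assigns it value \( 1 \), while \( \xi_1 \compat \xi_3 \) together with \( \xi_1 \subfun \recfnl[s_3]{j}{A_{s_3}}{} \) pins \( \recfnl[s_3]{j}{A_{s_3}}{} \) to value \( 0 \) at \( \pair{1}{d'} \). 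Therefore no \( s_3 \)-supported \( \xi_3 \subfun \recfnl[s_3]{j}{A_{s_3}}{} \) can consistently cover the use needed to compute back \( A_{s_3}(x) = 1 \), so \( \SelfComp[j,e]{x}{s} \) fails. Put differently: the three good computations at \( s_0, s_1, s_2 \) drive \( X_e \) to respond at column \( 2 \) and then at column \( 1 \), and a fourth would demand a third response at the empty column \( 0 \).

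I expect the third paragraph to be the main obstacle. The positive-change lemmas require honest containment and domain relations among the \( s_l \)-supported witnesses \( \xi_l \) to \( X_e \), so one must argue that these witnesses can be taken with compatible and support-closed domains, pin down in which column of \( X_e \) each incompatibility is realized, and verify that the offending node \( \pair{1}{d'} \) genuinely lies in the use feeding the back-computation of \( A_{s_l}(x) \) --- all while the \( \xi_l \) need not be correct approximations to \( X_e \). The cleanest route is probably to isolate, as an auxiliary lemma, a flipflop statement for \( X_e \) parallel to \cref{def:k-flipflop} (with \( X_e \) and a decisive node of its columns playing the roles of \( A \) and \( x \)) and to apply the positive-change lemmas in that packaged form rather than tracking individual nodes. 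By contrast, the reductions and the compatibility facts \( \xi_0 \compat \xi_2 \), \( \xi_1 \compat \xi_3 \) used above are routine given the lemmas already proved.
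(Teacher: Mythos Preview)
Your first two paragraphs are detours. The lemma does not depend on which column \(x\) lives in, and the \(A\)-side cascade is irrelevant: the paper's proof never touches \(A\) beyond the single fact \(A_{s_l}(x)\neq A_{s_{l+1}}(x)\). All the work takes place on the \(X_e\) side.

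Your third paragraph has the right idea but the gap you flag in the fourth paragraph is real. Having produced \(\pair{1}{d'}\) with \(\xi_1(\pair{1}{d'})=0\), \(\xi_2(\pair{1}{d'})=1\), you correctly deduce that \(\recfnl[s_3]{j}{A_{s_3}}{}\) is pinned to \(0\) at \(\pair{1}{d'}\) while any \(s_3\)-supported approximation containing \(\pair{1}{d'}\) assigns it \(1\). But nothing in \cref{def:backforth-witness} forces \(\pair{1}{d'}\in\dom\xi_3\): the back-computation \(\recfnl[s_3]{j}{\xi_3}{x}\) might simply not query that location. So tracking a single node does not close the argument without further work.

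The paper avoids all of this by applying \cref{lem:return-forces-change} twice, directly to the \(\xi_l\). From \(A_{s_l}\subfun A_{s_{l+2}}\) one gets \(\xi_l\subfun\xi_{l+2}\) (both lie under \(\recfnl[s_{l+2}]{j}{A_{s_{l+2}}}{}\), and one may take nested domains). Since \(\xi_0,\xi_1\) compute different values of \(A(x)\) they are incompatible, hence \(\xi_0\incompat[2]\xi_1\) (\(X_e\) is \(\REA[2]\)). \Cref{lem:return-forces-change} with \(\xi_0,\xi_1,\xi_2\) gives \(\xi_2\incompat[1]\xi_1\); a second application with \(\xi_1,\xi_2,\xi_3\) gives \(\xi_3\incompat[0]\xi_2\). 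But supported approximations to an \(\REA[n]\) set over \(\eset\) agree on column \(0\), contradiction. This is four lines, and it is exactly the ``packaged'' route you propose at the end --- except that no auxiliary flipflop lemma for \(X_e\) is needed, since \cref{lem:return-forces-change} already \emph{is} that package.
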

\begin{proof}
Let \( s_3 = s \) and  suppose, for a contradiction, that both \( \SelfComp<3>[j,e]{x}{s_0, s_1, s_2, s_3} \)  and \( \SelfComp[j,e]{x}{s_3} \) hold and are witnessed by the approximations \( \xi_l, l < 4\) to \( X_e \).  By part \ref{def:k-flipflop:alternation} of \cref{def:k-flipflop} and \cref{def:backforth-witness} it follows that \( \forall(l \leq k - 2)\left( \xi_{l} \subfun \xi_{l+2} \right) \).  

As \( A_{s_0}(x)\conv \neq A_{s_{1}}(x)\conv \) we have \( \xi_0 \incompat[2] \xi_1 \)  by \cref{lem:return-forces-change} we have \( \xi_2 \incompat[1] \xi_1 \) and as  \( A_{s_1}(x)\conv \neq A_{s_{2}}(x)\conv \) we can apply  \cref{lem:return-forces-change} again to infer that \( \xi_3 \incompat[0] \xi_2 \).  But \( s \)-supported approximations to an \( \REA(Z)[n] \)  can't disagree on column \( 0 \).  Contradiction. 
\end{proof}

With this in mind we can now reiterate the basic strategy  we would use to meet \req{R}{j,e} if
there were no other requirements.  Choose some \( x \)  and hold it out of      \( \setcol{A}{3}
\) until we observe a computation witnessing \( \SelfComp{x}{s} \), i.e., a computation of an \( s
\)-supported approximation \( \xi \) of \( X_e \) which computes \( x \nin \setcol{A}{3} \).  This
stage becomes \( s_0 \) in the lemma above and \( \xi  \) becomes \( \xi_0 \).    Now enumerate \( x
\) into \( \setcol{A}{3} \) dependent on a very large initial segment of  \( \setcol{A}{2} \) and
wait until we again see \( \SelfComp{x}{s} \) with a witness whose domain contains the domain of \(
\xi_0 \).  This new stage becomes \( s_1 \) and this new witness becomes \( \xi_1 \).  Enumerate an
element into \( \setcol{A}{2} \) depending on a large initial segment of \( \setcol{A}{1} \)
canceling the enumeration of \( x \) and now wait until we again see  \( \SelfComp{x}{s} \) with \(
A_s \supfun A_{s_0} \) and a witness whose domain extends that of \( \xi_1 \).  Repeat the
cancellation one last time via an enumeration into \( \setcol{A}{1} \) for a guaranteed win.  At
each point at which we wait on a computation we restrain any elements from being enumerated into any
initial segment appearing in a computation we've used in this process.  The lemma above ensures that
this process ends in a victory.

\subsection{Overview of \( \req*{P}{i} \)}
\label{sec:overview:req-P}

To meet \( \req{P}{i} \) we must construct sets \( Y_i \) and functionals that witness  \( Y_i \Tplus A \Tequiv A \Tplus \REset(A){i}\).  As we control the construction of the sets \( Y_i \) we will simply settle on a particular way of coding \( \REset(A){i} \) into \( Y_i \) so we can share a single functional \( \Tfunc{} \) computing \( \REset(A){i} \) from \( Y_i \).  As elements can both enter and leave (our approximation to) \( \REset(A){i} \) during the construction our coding mechanism must allow \( Y_i \) to change it's mind about the value of \( \REset(A){i}(x) \).

To this end we use the \( x \)-th column of \( \setcol{Y_i}{3} \) to encode whether or not \( x \) is in  \( \REset(A){i} \).  To guess that \( x \in \REset(A){i}  \) we place \( \pair{x}{0} \) into \( \setcol{Y_i}{3} \), to revoke that guess we place \( \pair{x}{1} \) into \( \setcol{Y_i}{3} \), to guess \( x \) in \( \REset(A){i} \) we place \( \pair{x}{2} \) into \( \setcol{Y_i}{3} \) and so on.  To avoid any need to memorize the particular coding convention we define the following notation.

\begin{definition}\label{def:code-location}
\[ \codeY<i>[s]{x} = \pair{x}{k} \text{ where } k  \text{ is the least element of } \omega \text{ such that }  \setcol{Y_{i, s - 1}}{3}(\pair{x}{k})\conv  = 0 \]
When used without stage it indicates the least element of the form \( \pair{x}{k} \) not yet enumerated into \( \setcol{Y_{i}}{3}\).  
\end{definition}

On the other hand, as we don't control the set \( \REset(A){i} \), the only way we can take full advantage of any change in \( \REset(A){i} \) is to build functionals \( \Gfunc{i}{} \) so that it always tries to map \( A_s = A_s\restr{l_s} \) to \( Y_{i,s}\restr{s} \) whenever \( i < s \) but is reset whenever \req{P}{i} is reinitialized.

\begin{definition}\label{def:funcs} \hfil \\
    \begin{enumerate}
\item \label{def:funcs:T} \(  \Tfunc[s]{Z}(x) = \begin{cases}  
                                                    1 & \text{if } \max \set{k}{Z(\pair{3}{\pair{x}{k}})\conv = 1 \lor k = -1} \equiv 0 \!\pmod{2} \\
                                                    0 & \text{otherwise} \\
                                                        \end{cases}
        \)
        \item \label{def:funcs:G} Let \( s' \) be the last stage with \(  s' \leq s \) at which \module{P}{i} is reinitialized (injured).  Then \( \Gfunc[s]{i}{X \Tplus Z}(y)\conv = Y_{i,t}(y) \) where \( t \) is the least stage satisfying  \( s \geq t \geq s' \), \( t > y \),  \( Y_{i,t}(y)\conv \),  \( X \supfun  A_t \) and  \( Z \supfun \REset[t](A_t){i}\restr{y+1} \).  If there is no such \( t \) then \( \Gfunc[s]{i}{X \Tplus Z}(y)\diverge \)  
    \end{enumerate}
\end{definition}

Note that \cref{cond:building-A:infinite-correctness} ensures that \( \Gfunc{i}{A \Tplus \REset(A){i}} \) is total as for every \( y \) there is a stage  \( s > y \) at which \( A_s \subfun A \) and \( \REset[s+1](A){i}\restr{y+1} = \REset(A){i}\restr{y+1} \).   Also, observe that if \( x > s \) then \( \Gfunc{i}{} \) makes no commitments about \( x \) prior to stage \( s \) and thus any commitments made about \( x \) will be dependent on at least the initial \( l_s \) elements of each column of \( A \).    



We build \( Y_i \) by enumerating axioms much like we do for \( A \) but, remember it is only the columnwise sum \( Y_i \Tplus+ A \) which we build to be \( \REA[3] \) not \( Y_i \) itself (this merely saves us the trouble of copying every enumeration into \( A \) over to \( Y_i \)) 
Thus, the axioms enumerating elements into \( Y_i \) can depend not only on prior columns of \( Y_i \) but on prior columns of \( A \) as well.  We omit the straightforward modifications to the definition of axiom to allow for this dependence.  As with \( A \) we will simply indicate which elements we wish to enumerate into \( Y_i \) and rely on the following properties to uniquely define what axiom is enumerated and how our approximation to \( Y_i \) is affected.


\begin{property}\label{cond:Y-approx} \leavevmode
\begin{propenum}
    \item\label{cond:Y-approx:s-supported} \( Y_{i,s} \) is the unique partial function with \( \dom Y_{i,s} = \set{\pair{k}{x}}{k < l_s +4 \land x < l_s} \)  such that \( Y_{i,s} \Tplus+ A_s \) is an \( s \)-supported approximation (i.e., \( \axset[s]({Y_i \Tplus+ A})\)-supported) (hence, \( k > 3, k < l_s +4, x < l_s \implies  Y_{i,s}(\pair{k}{x})\conv =0  \) ).
    \item \label{cond:Y-approx:largeuse} If \( x \) is enumerated into \( \setcol{Y_i}{k} \) at stage \( s \) then \( x \) is enumerated dependent on \( \setcol{A_s}{k'} \) and  \( \setcol{Y_{i,s}}{k'} \) for \( k' < k \) (hence on prior columns of both \( A \) and \( Y_i \) up to height \( l_s \)).     
\end{propenum}
\end{property}

Our construction will ensure that the following condition is satisfied.

\begin{condition} 
\label{cond:Y-approx:compute-right} For all \( s \),  \( \Tfunc[s]{Y_{i,s}}(x) = \REset[s]({A_s}){i}(x)  \) and  \( \Gfunc[s]{i}{A_s \Tplus \REset[s](A_s){i}}(x) = Y_{i,s}(x)  \) whenever both sides are defined.  Moreover, \( \Gfunc{i}{} \) is well-defined. 
\end{condition}

To this end, unlike the functional \( \Gfunc{i}{} \), we don't reset the set of axioms enumerated into \( \axset(Y_i) \) when the module implementing \( \req{P}{i} \) is reinitialized since \( \REset(A){i} \) isn't reinitialized.




\subsection{Requirement Interaction}
\label{sec:overview:interaction}

Note that we can regard \( A \Tplus \REset(A){i} \) as (equivalent to) a \( \REA[4] \) set so the same considerations about an \( \REA[n+1] \) set avoiding equivalence with an \( \REA[n] \) from \cref{sec:overview:req-R} apply but now it's our job to ensure equivalence.  Obviously, if we could leave \( \setcol{A}{1} \) empty (or even computable) then we could trivially meet \req{P}{i} simply by enumerating \( \codeY<i>[s]{x} \) into \( \setcol{Y_i}{3} \) whenever we see \( z \) enter \( \REset(A){i} \) (as we could wait until \( \setcol{A}{1} \) had settled) and simply copying \(  \setcol{A}{2}, \setcol{A}{3} \) into \( \setcol{Y_i}{1} \) and \( \setcol{Y_i}{2} \) respectively.  However, meeting requirements of the form \req{R}{j,e} forces us to make (non-computable) enumerations into all three columns of \( A \).

Specifically, an opponent building \( \REset(A){i} \) could try to duplicate the kind of argument we gave in \cref{sec:overview:req-R}.  If such an opponent could arrange for the approximation to \( A \Tplus \REset(A){i} \)  to flip-flop on some value \( z \) \( 4 \) times (waiting for new \( \Gfunc{i}{}, \Tfunc{} \) computations each time) then we would have no way to change \( Y_i \Tplus+ A \) to meet \req{P}{i}.  This situation is loosely depicted in \cref{fig:piggybacking-off-R} where the shaded region is meant to represent the region below \( l_s \) (and thus use and domain of \( \Tfunc{} \) and \( \Gfunc{i}{} \)) while the hatched region in stage \( t_4 \)  represents the use of the \( \Gfunc{i}{} \) computation which disagrees with \( Y_i \).

{
 \newcommand{\fillAllToLevel}[1]{
 \REAmatrixFillRect<fill=LightGray>(a){#1-1}{4-3}
  \REAmatrixFillRect<fill=LightGray>(w){#1-1}{4-1}
  \REAmatrixFillRect<fill=LightGray>(y){#1-1}{4-3}
 }

\newcommand{\divider}{\, \rule[.7cm]{1pt}{1.5cm} \,}
 \begin{figure}
    \centering
    \begin{tikzpicture}
        \REAmatrix(a)<{\( A \) }>{4}{3}
        \LineLabelCell[right][5ex]{4-3}{\( c \) }
        \REAmatrix(w)<{\( \REset(A){i} \) }>[right = 0cm of a]{4}{1}
        \LineLabelCell[above right][1.5ex]{4-1}{\( z \) }
        \node[right=.08cm of w,yshift=.5cm] (funcs) {\( \stackrel[{ \Gfunc{i}{A \Tplus \REset(A){i}} }]{ \Tfunc{Y_i} }{\leftrightarrows} \) } ;
         \REAmatrix(y)<{\( Y_i \) }>[right=.05cm of funcs,yshift=-.5cm]{4}{3}
         \LineLabelCell[right]{4-3}{\( \codeY<i>{z} \) }
         \fillAllToLevel{4}
        \node [below=1cm,xshift=1cm] {Stage \( t_{0} \)};
    \end{tikzpicture} \divider
      \begin{tikzpicture}
        \REAmatrix(a)<{\( A \) }>{4}{3}
        \REAmatrix(w)<{\( \REset(A){i} \) }>[right = 0cm of a]{4}{1}
        \REAmatrixEnum{4-1}
         \REAmatrix(y)<{\( Y_i \) }>[right=.05cm of w]{4}{3}
        \REAmatrixEnum{4-3}
         \fillAllToLevel{3}
        \node [below=1cm,xshift=1cm] {Stage \( t_{1} \)};
    \end{tikzpicture} \\
      \begin{tikzpicture}
        \REAmatrix(a)<{\( A \) }>{4}{3}
        \REAmatrixEnum{4-3}
        \REAmatrix(w)<{\( \REset(A){i} \) }>[right = 0cm of a]{4}{1}
        \REAmatrixEnum[0]{4-1}
         \REAmatrix(y)<{\( Y_i \) }>[right=.05cm of w]{4}{3}
        \REAmatrixEnum{3-2}
        \REAmatrixEnum[0]{4-3}
         \fillAllToLevel{2}
        \node [below=1cm,xshift=1cm] {Stage \( t_{2} \)};
    \end{tikzpicture} \divider 
        \begin{tikzpicture}
        \REAmatrix(a)<{\( A \) }>{4}{3}
        \REAmatrixEnum[0]{4-3}
          \REAmatrixEnum{2-2}
        \REAmatrix(w)<{\( \REset(A){i} \) }>[right = 0cm of a]{4}{1}
        \REAmatrixEnum{4-1}
         \REAmatrix(y)<{\( Y_i \) }>[right=.05cm of w]{4}{3}
         \REAmatrixEnum{2-1}
        \REAmatrixEnum[0]{3-2}
        \REAmatrixEnum{4-3}
         \fillAllToLevel{1}
        \node [below=1cm,xshift=1cm] {Stage \( t_{3} \)};
    \end{tikzpicture} \divider
        \begin{tikzpicture}
        \REAmatrix(a)<{\( A \) }>{4}{3}
        \REAmatrixEnum{4-3}
          \REAmatrixEnum[0]{2-2}
          \REAmatrixEnum{1-1}
        \REAmatrix(w)<{\( \REset(A){i} \) }>[right = 0cm of a]{4}{1}
        \REAmatrixEnum[0]{4-1}
         \REAmatrix(y)<{\( Y_i \) }>[right=.05cm of w]{4}{3}
         \REAmatrixEnum{2-1}
        \REAmatrixCircle{2-1}
        \REAmatrixEnum{4-3}        
         \REAmatrixFillRect<pattern=crosshatch>(a){2-1}{4-3}
  \REAmatrixFillRect<pattern=crosshatch>(w){2-1}{4-1}
   \node [below=1cm,xshift=1cm] {Stage \( t_{4} \)};
    \end{tikzpicture} 
   \caption{Piggybacking off \req*{R}{j,e}}
    \label{fig:piggybacking-off-R}
\end{figure}

}

For instance, our opponent might start by reserving some value \( z \) to be kept out of \( \REset(A){i} \).  By waiting until we've committed to the behavior of our functionals on the appropriate initial segments our opponent could force a change to \( Y_i \) by enumerating \( z \) into  \( \REset(A){i} \) with some large use and wait until some stage at which our functionals again witness (enough of) the equivalence \( A \Tplus \REset(A){i} \Tequiv Y_i \Tplus+ A \).  The proof of \cref{thm:extend-two-REA} from \cite{Cholak1994Iterated} shows that our opponent can ensure that sometimes we're forced to enumerate some \( y \) into \( \setcol{Y_i}{2} \) (to cancel a previous enumeration into \( \setcol{Y_i}{3} \)) in response to  some requirement \req{R}{j,e} enumerating \( c \) into  \( \setcol{A}{3} \).  That is, we can't avoid getting to stage \( t_2 \) as depicted in  \cref{fig:piggybacking-off-R}.

\subsection{Safety Via Agreement}
\label{sec:overview:agreement}

If \req{P}{i}  is of higher priority than \req{R}{j,e} we need to be able to meet \req{R}{j,e} in a way that doesn't allow an opponent to force an injury to \req{P}{i}. Our strategy will be to modify the way (the module responsible for meeting) \req{R}{j,e} operates to ensure that when \req{R}{j,e} enumerates \( b \) into \( \setcol{A}{2} \) we aren't forced to enumerate any element into \( \setcol{Y_i}{1} \), i.e., we avoid ever reaching the stage labeled \( t_3 \)  in \cref{fig:piggybacking-off-R}.  If we can arrange this, it removes our opponents \( 1 \) column advantage on us which will prevent \req{R}{j,e} from injuring \req{P}{i} (the final enumeration of \( a \) into \( \setcol{A}{1} \) will thus keep \( A \Tplus+  Y_i \) and \( A \Tplus \REset(A){i} \) in lock step).

The key idea here is that  \( b \)'s entry into \( \setcol{A}{2} \) can only force us to enumerate an element into \( \setcol{Y_i}{1} \) if there is some (small) value \( y \) with \( y \nin Y_{i,t_1 - 1} \) but  \( y \in Y_{i,t_3 -1} \).  In other words we will wait to enumerate \( b \) until the second (and first) column of \( Y_i \) are the same as they were immediately before \( c \) was enumerated. 

As the enumeration of \( c \) into \( \setcol{A}{3} \) at stage \( s_0 \)  will generally force a change in \( \setcol{Y_i}{2} \) and since we can't control \( \REset(A){i} \)  we can't ensure this condition is met for any particular value \( c \).  Instead, we make multiple attempts to meet \req{R}{j,e} with the first attempt enumerating \( c_0 \) into \( \setcol{A}{3} \), the second attempt \( c_1 \) and so on and argue that for some \(c =  c_k \) we either meet \req{R}{j,e} without canceling \( c \)  or that we eventually see the agreement needed to allow safe enumeration of \( b \) into \( \setcol{A}{2} \).  

Specifically, at stage \( s_{-1} \) we initialize (the module for) \req{R}{j,e} choosing \( c_0 \) large and setting \( c_k = c_0 + k \) (in the full construction \( \setcol{A}{3} \) will be partitioned between the various requirements to avoid collision).      At stages \( t_k =  s_k -1 \) we observe an active stage for \req{R}{j,e}  and, if we haven't yet found agreement (i.e. some \( c_n \) which we can now cancel without changing the first two columns of \textit{any} set \( Y_i \)), we respond at stage \( s_k \) by enumerating (only) \( c_k \) into \( \setcol{A}{3} \).  

There are two critical aspects of this strategy.  First, we ensure that \( c_{k+1} \) is small relative to the stage at which \( c_k \) is first enumerated so that each time we enumerate some \( c_k \) we are extending the use of \( \Gfunc{i}{} \).  Hence, commitments made about \( \Gfunc{i}{} \) after stage \( s_{-1} \) are canceled when we enumerate any \( c_k \) into \( \setcol{A}{3} \).     Second, the fact that we choose \( c_0 \) large means that if \( z \in \REset[s_{-1}](A){i} \) then no enumeration of \( c_k \) will cancel \( z \) (and thus no reason to change how \( Y_i \) codes \( z \)'s membership in \( \REset(A){i} \) ).  Note that it will also be the case that each time we enumerate some \( c_k \) we will remove\footnote{Remember, that we've used the fact that we will ensure \( A_s \subfun A \) infinitely often to allow us to set the `use' of a computation placing  \(z \in \REset(A){i} \) at stage \( s \) to \( l_s \).} any \( z \) enumerated into \( \REset(A){i} \) since stage \( s_{-1} \).      Thus, the only real constraint prior commitments impose on us while trying to produce the desired agreement is that we must remove \( \codeY[s_{-1}]<i>{z} \) from \( \setcol{Y_i}{3} \) \textit{if present} whenever we enumerate any \( c_k \) into \( \setcol{A}{3} \).

The main difficulty in the construction will be to argue that we can always arrange our enumerations into \( Y_i \) during the intervals \( (s_{k-1}, s_{k}) \) so that we eventually find some \( c_n \) we can cancel without changing the first two columns of any \( Y_i \).   Stated in terms of the following definition we need to show that we can find \( k',k \) such that  \( s_{k'} - 1 \Yagree s_k - 1 \).  That is,  there are active stages for \( \req{R}{j,e} \) (or there are only finitely many such stages)  \( s_{k'} - 1 < s_k -1 \) such that canceling \( c_{k'} \) by enumerating \( b_{k'} \)  at stage\footnote{Note that in full construction we assume that no elements enter any \( \REset(A){i} \) at stages \( s_k \) so we need not worry about changes between \( s_k -1 \) and \( s_k \).} \( s_k \)  (which requires us to return all \( Y_i \) to a state compatible with that at \( s_{k'} - 1 \)) won't require changes in the first two columns of \( Y_i \) (the third column is returned to it's earlier state for free by the enumeration of \( b_{k'} \)).

\begin{definition}\label{def:Y-agree}
We define 
\begin{align*}  
s \Yagree<i> t & \iffdef \setcol{Y_{i,s}}{\leq 2}\restr{l_s} \compat \setcol{Y_{i,t}}{\leq 2}\restr{l_t} \\
s \Yagree t & \iffdef \forall(i)\left(s \Yagree<i> t\right) \\
\end{align*}
We say that \( s, t \) \defword{agree} just when  \( s \Yagree t \).
\end{definition}

For future use we also adopt the following terminology (reflecting the fact that if \( t \) is accessible at stage \( s \) then there is some possible enumeration of elements into the sets \( Y_i \) that would allow some later stage \( s' > s \) to satisfy \( t \Yagree s' \)).

\begin{definition}\label{def:accessible-stage}

    A stage \( t \) is \textbf{accessible} at a stage \( s \geq t \) just if  for all \( i \) 
        \[ \setcol{Y_{i,s}}{\leq 1}\restr{l_s} \supfun \setcol{Y_{i,t}}{\leq 1}\restr{l_t} \]

    A stage is (\( i \)-\textbf{canceled}) \textbf{canceled} just if it isn't (\( i \)-accessible) accessible.
\end{definition}

We also adopt the definition below.  Note that active stages are the stages at which \module{R}{j,e} takes action in response to seeing a longer \( \recfnl{j}{} \)  back and forth computation on it's own accord rather than merely responding to some enumeration into \( \REset(A){i} \).

\begin{definition}\label{def:expansionary-stage}
    A stage \( s_m \) at which  \module{R}{j,e} enumerates an element into \( \setcol{A}{3} \) or \( \setcol{A}{2} \)  is a (\req{R}{j,e}) \textbf{active stage}.   A stage \( t_k \) is called a (\req{R}{j,e}) \textbf{preactive} stage just if \( t_{k} + 1 \) is an active stage.    
\end{definition}

\subsection{Simplified Agreement}
\label{sec:overview:simplified-agreement}

Let's consider how (the module for) \req{R}{j,e} might enact this strategy with respect to a single requirement \req{P}{i} assuming only a single value  \( z \) enters or leaves \( \REset(A){i} \).  On this assumption, we'll describe a winning strategy for \req{R}{j,e} that enumerates at most two elements \( c_0, c_1 \) into \( \setcol{A}{3} \).   This strategy is visualized in \cref{fig:safe-satisfying,fig:safe-satisfying-two}. Note that, while the figures depict \( c_0 \) and  \( \codeY[s_{-1}]<i>{z} \) as being on the same level but in reality \( \codeY[s_{-1}]<i>{z} \) is smaller than \( c_0 \) which is why we shade a larger region of \( Y_i \) (where the shaded region indicates the region below \( l_s \) at the given stage).

   {

\newcommand{\divider}{\, \rule[1cm]{1pt}{2cm} \,}
\newcommand{\mylabel}{\node [below=1.5cm,xshift=1cm]}
 \NewDocumentCommand{\fillAllToLevel}{O{#2}m}{
 \REAmatrixFillRect<fill=LightGray>(a){#2-1}{6-3}
  \REAmatrixFillRect<fill=LightGray>(w){#2-1}{6-1}
  \REAmatrixFillRect<fill=LightGray>(y){#1-1}{6-3}}

 \begin{figure}
    \centering
    \begin{tikzpicture}
        \REAmatrix(a)<{\( A \) }>{6}{3}
        \LineLabelCell[right][1ex]{6-3}{\( c_0 \) }
        \LineLabelCell[right][1ex]{5-3}{\( c_1 \) }
        \REAmatrix(w)<{\( \REset(A){i} \) }>[right = .4cm of a]{6}{1}
         \REAmatrixEnum{6-1}
         \LineLabelCell[right][1ex]{6-1}{\( z \) }
         \REAmatrix(y)<{\( Y_i \) }>[right=.4cm of w]{6}{3}
         \REAmatrixEnum{6-3}
         \fillAllToLevel[5]{6}
        \node [below=1.5cm,xshift=2cm] {\( s_{0} - 1 \)};
    \end{tikzpicture} \divider
        \begin{tikzpicture}
        \REAmatrix(a)<{\( A \) }>{6}{3}
        \REAmatrixEnum{6-3}
        \REAmatrix(w)<{\( \REset(A){i} \) }>[right = 0cm of a]{6}{1}
         \REAmatrixEnum[0]{6-1}
         \REAmatrix(y)<{\( Y_i \) }>[right=0cm of w]{6}{3}
         \REAmatrixEnum{5-2}
         \REAmatrixCircle{5-2}
         \REAmatrixEnum[0]{6-3}
         \fillAllToLevel[4]{5}
        \mylabel {Stage \( s_{0} \) and \( s_{1} - 1 \)};
    \end{tikzpicture} \\
    \begin{tikzpicture}
        \REAmatrix(a)<{\( A \) }>{6}{3}
        \REAmatrixEnum{6-3}
        \REAmatrixEnum{5-3}
        \REAmatrix(w)<{\( \REset(A){i} \) }>[right = 0cm of a]{6}{1}
         \REAmatrix(y)<{\( Y_i \) }>[right=0cm of w]{6}{3}
         \REAmatrixEnum{5-2}
          
         \fillAllToLevel[3]{4}
        \mylabel {Stage \( s_{1} \) };
    \end{tikzpicture} \divider
        \begin{tikzpicture}
        \REAmatrix(a)<{\( A \) }>{6}{3}
        \REAmatrixEnum{6-3}
        \REAmatrixEnum{5-3}
        \REAmatrix(w)<{\( \REset(A){i} \) }>[right = 0cm of a]{6}{1}
         \REAmatrixEnum{6-1}
         \REAmatrix(y)<{\( Y_i \) }>[right=0cm of w]{6}{3}
         \REAmatrixEnum{5-2}
          
         \REAmatrixEnum{6-3}
         \fillAllToLevel[2]{3}
        \mylabel {Stage \( s_{2} - 1 \Yagree<i> s_1 -1  \) };
    \end{tikzpicture} \divider
            \begin{tikzpicture}
        \REAmatrix(a)<{\( A \) }>{6}{3}
        \REAmatrixEnum{6-3}
        \REAmatrixEnum[0]{5-3}
        \REAmatrixEnum{4-2}
        \REAmatrix(w)<{\( \REset(A){i} \) }>[right = 0cm of a]{6}{1}
         \REAmatrixEnum[0]{6-1}
         \REAmatrix(y)<{\( Y_i \) }>[right=0cm of w]{6}{3}
         \REAmatrixEnum{5-2}
         \REAmatrixEnum[0]{6-3}
         \fillAllToLevel[1]{2}
        \mylabel {Enumerating \( b_1 \) };
    \end{tikzpicture}
 
   \caption{Satisfying \req{R}{j,e} respecting \req{P}{i} on \( z \)}
    \label{fig:safe-satisfying}
\end{figure}

}

  If\( z \) doesn't enter \( \REset(A){i} \) in the interval \( [s_{-1},  s_0) \) when we enumerate \( c_0 \) we get \( s_{0} -1 \Yagree<i> s_1 - 1 \) for free since there was never any need to cancel any element in \( \setcol{Y_i}{3} \) (this case isn't depicted).

{

\newcommand{\divider}{\, \rule[1cm]{1pt}{2cm} \,}
\newcommand{\mylabel}{\node [below=1.5cm,xshift=1cm]}
 \NewDocumentCommand{\fillAllToLevel}{O{#2}m}{
 \REAmatrixFillRect<fill=LightGray>(a){#2-1}{6-3}
  \REAmatrixFillRect<fill=LightGray>(w){#2-1}{6-1}
  \REAmatrixFillRect<fill=LightGray>(y){#1-1}{6-3}}

 \begin{figure}
    \centering
        \begin{tikzpicture}
        \REAmatrix(a)<{\( A \) }>{6}{3}
        \REAmatrixEnum{6-3}
        \REAmatrix(w)<{\( \REset(A){i} \) }>[right = 0cm of a]{6}{1}
         \REAmatrixEnum[0]{6-1}
         \REAmatrix(y)<{\( Y_i \) }>[right=0cm of w]{6}{3}
         \REAmatrixEnum{5-2}
         \REAmatrixEnum[0]{6-3}
         \fillAllToLevel[4]{5}
        \mylabel {Stage \( s_0 \)};
    \end{tikzpicture} \divider
            \begin{tikzpicture}
        \REAmatrix(a)<{\( A \) }>{6}{3}
        \REAmatrixEnum{6-3}
        \REAmatrix(w)<{\( \REset(A){i} \) }>[right = 0cm of a]{6}{1}
         \REAmatrixEnum{6-1}
         \REAmatrix(y)<{\( Y_i \) }>[right=0cm of w]{6}{3}
         \REAmatrixEnum[0]{5-2}
      \REAmatrixEnum{4-1}
         \REAmatrixEnum{6-3}
         \fillAllToLevel[4]{5}
        \mylabel {Stage \( s'_0 \) and \( s_1 - 1 \Yagree<i> s_0 - 1 \) };
    \end{tikzpicture} \divider
    \begin{tikzpicture}
        \REAmatrix(a)<{\( A \) }>{6}{3}
         \REAmatrixEnum[0]{6-3}
         \REAmatrixEnum{5-2}
        \REAmatrix(w)<{\( \REset(A){i} \) }>[right = 0cm of a]{6}{1}
         \REAmatrixEnum{6-1}
         \REAmatrix(y)<{\( Y_i \) }>[right=0cm of w]{6}{3}
         \REAmatrixEnum{6-3}
          \REAmatrixEnum{4-1}
         \fillAllToLevel[3]{4}
        \mylabel {Enumerating \( b_0 \) };
    \end{tikzpicture}
 
   \caption{Alternate behavior after stage \( s_0 \)}
    \label{fig:safe-satisfying-two}
\end{figure}

}

In \cref{fig:safe-satisfying} we see that \( z \) enters \( \REset(A){i} \) by stage \( s_{0} - 1 \) forcing us to cancel \( \codeY[s_{-1}]<i>{z} \) at stage \( s_0 \).  Note that when \( z \) first enters \( \REset(A){i} \) we become free to code that entry into \( Y_i \) since the enumeration changes \( \REset(A){i} \) on the use of  \( \Gfunc{i}{A \Tplus \REset(A){i}}(\codeY[s_{-1}]<i>{z}) \).  However, in the figure \( z \) isn't enumerated into \( \REset(A){i} \) during \( [s_0, s_1) \).  We've circled the value which blocks agreement at stage \( s_1 - 1 \) with the earlier stage \( s_0 -1 \) .  But now observe that at stage \( s_2 - 1 \)   agreement is achieved for free since we had no need to cancel any elements at stage \( s_1 \) (with \( c_1 \) playing the role of \( c \)).  As we will see later, even if we'd had other intervening stages as long as they didn't enumerate any elements into \( \setcol{Y_i}{1} \)  small at stage \( s_1 - 1 \) we could cancel their contributions.    Note that when we enumerate \( b_1 \) into \( \setcol{A}{2} \) it automatically removes \( \codeY[s_{-1}]<i>{z} \) from \( \setcol{Y_i}{3} \) as this was enumerated during the interval \( (s_1, s_2) \) (remember \( A \Tplus+ Y_i \) is \( \REA[3] \) not \( Y_i \)).

In \cref{fig:safe-satisfying-two} we depict the case where \( z \) is enumerated into \( \REset(A){i} \) at some stage \( s'_0 \in (s_0, s_1) \).  At stage \( s'_0 \) (we can't wait until \( s_1 -1 \) ) we are faced with a choice.  We could simply enumerate  \( \codeY[s_{-1}]<i>{z} \) into \( \setcol{Y_i}{3} \) again but then the value we enumerated into \( \setcol{Y_i}{2} \) at stage \( s_0 \) would prevent us from achieving agreement between stages \( s_0 -1 \) and \( s_1 - 1 \).  So instead we choose to cancel the value enumerated into  \( \setcol{Y_i}{2} \) and restore the state of \( Y_i \) at stage \( s_0 - 1 \) producing agreement.  

So far our examples have always shown \( z \) entering \( \REset(A){i} \) before the enumeration of \( c_0 \) as this is the most challenging case to achieve agreement.  However, it's useful to consider the case to illustrate two important points.  First, to demonstrate why it's only agreement in the first two columns of \( Y_i \) which matters not agreement about \( z \in \REset(A){i}  \)  and why later enumeration into \( \REset(A){i} \) doesn't threaten agreement once it's been achieved.  This situation is depicted in \cref{fig:z-removing-victory}.

   {

\newcommand{\divider}{\, \rule[1cm]{1pt}{2cm} \,}
\newcommand{\mylabel}{\node [below=1.5cm,xshift=1cm]}
 \NewDocumentCommand{\fillAllToLevel}{O{#2}m}{
 \REAmatrixFillRect<fill=LightGray>(a){#2-1}{6-3}
  \REAmatrixFillRect<fill=LightGray>(w){#2-1}{6-1}
  \REAmatrixFillRect<fill=LightGray>(y){#1-1}{6-3}}

 \begin{figure}
    \centering
    \begin{tikzpicture}
        \REAmatrix(a)<{\( A \) }>{6}{3}
        \REAmatrix(w)<{\( \REset(A){i} \) }>[right = 0cm of a]{6}{1}
         \REAmatrix(y)<{\( Y_i \) }>[right=0cm of w]{6}{3}
         \fillAllToLevel[5]{6}
         \mylabel {\( s_{0} - 1 \)};
    \end{tikzpicture} \divider
        \begin{tikzpicture}
        \REAmatrix(a)<{\( A \) }>{6}{3}
        \REAmatrixEnum{6-3}
        \REAmatrix(w)<{\( \REset(A){i} \) }>[right = 0cm of a]{6}{1}
         \REAmatrix(y)<{\( Y_i \) }>[right=0cm of w]{6}{3}
         \fillAllToLevel[4]{5}
        \mylabel {Stage \( s_{0} \Yagree<i> s_0 - 1  \)};
    \end{tikzpicture} \divider
                \begin{tikzpicture}
        \REAmatrix(a)<{\( A \) }>{6}{3}
        \REAmatrixEnum{6-3}
        \REAmatrix(w)<{\( \REset(A){i} \) }>[right = 0cm of a]{6}{1}
         \REAmatrixEnum{6-1}
         \REAmatrix(y)<{\( Y_i \) }>[right=0cm of w]{6}{3}
         \REAmatrixEnum{6-3}
         \fillAllToLevel[4]{5}
        \mylabel {Stage \( s_1 - 1 \Yagree<i> s_0 - 1 \) };
    \end{tikzpicture} \\
        \begin{tikzpicture}
        \REAmatrix(a)<{\( A \) }>{6}{3}
         \REAmatrixEnum[0]{6-3}
         \REAmatrixEnum{5-2}
        \REAmatrix(w)<{\( \REset(A){i} \) }>[right = 0cm of a]{6}{1}
         \REAmatrixEnum[0]{6-1}
         \REAmatrix(y)<{\( Y_i \) }>[right=0cm of w]{6}{3}
         \REAmatrixEnum[0]{6-3}
         \fillAllToLevel[3]{4}
        \mylabel {Enumerating \( b_0 \) at \( s_1 \)  };
    \end{tikzpicture} \divider
        \begin{tikzpicture}
        \REAmatrix(a)<{\( A \) }>{6}{3}
         \REAmatrixEnum[0]{6-3}
         \REAmatrixEnum{5-2}
        \REAmatrix(w)<{\( \REset(A){i} \) }>[right = 0cm of a]{6}{1}
         \REAmatrixEnum[1]{6-1}
         \REAmatrix(y)<{\( Y_i \) }>[right=0cm of w]{6}{3}
         \REAmatrixEnum[1]{6-3}
         \fillAllToLevel[2]{3}
        \mylabel {\( z \) may still enter \( \REset(A){i} \)   };
    \end{tikzpicture}  \divider
 \begin{tikzpicture}
        \REAmatrix(a)<{\( A \) }>{6}{3}
        \REAmatrixEnum{4-1}
         \REAmatrixEnum[0]{5-2}
        \REAmatrixEnum{6-3}
        \REAmatrix(w)<{\( \REset(A){i} \) }>[right = 0cm of a]{6}{1}
         \REAmatrixEnum{6-1}
         \REAmatrix(y)<{\( Y_i \) }>[right=0cm of w]{6}{3}
         \REAmatrixEnum[0]{6-3}
         \fillAllToLevel[1]{2}
        \mylabel {Enumerating \( a \) to cancel \( b_0 \)  };
    \end{tikzpicture}

   \caption{Satisfying \req{R}{j,e}, \req{P}{i} when \( b_0 \) removes \( z \) from \( \REset(A){i} \)   }
    \label{fig:z-removing-victory}
\end{figure}

}

In this scenario \( z \) doesn't enter \( \REset(A){i} \) before, at \( s_0 \), we enumerate \( c_0 \) into \( \setcol{A}{3} \).  As such, we don't enumerate \( \codeY[s_{-1}]<i>{z} = \codeY[s_{0}]<i>{z} \) before \( s_0 \) either so it need not be canceled at stage \( s_0 \).  Hence, at stage \( s_0 \) we already have agreement with stage \( s_0 -1 \), i.e.,  \( s_0 \Yagree<i> s_0 -1 \).  However, we may have to wait to see a \req{R}{j,e} expansionary stage and during that time we may see \( z \) enter \( \REset(A){i} \).  But, note that, once we've achieved agreement we can always respond to such an entry by enumerating \( \codeY<i>{z} \) (labeled above as \( \codeY[s_{-1}]<i>{z} \)) into \( \setcol{Y_i}{3} \) without jeopardizing that agreement.  Thus, at stage \( s_1 -1 \) when we finally see a \req{R}{j,e} expansionary stage we have \( s_1 -1 \Yagree<i> s_0 -1 \).  

Now, remember, that it's not actually \( Y_i \) that we've committed to building as a \REA[3] set but  \( A \Tplus+ Y_i \) (the columwise sum).  Hence, the enumeration of \( b_0 \) into \( \setcol{A}{2} \) doesn't just cancel \( c_0 \) but also  \( \codeY[s_{-1}]<i>{z} \) (in \cref{fig:safe-satisfying-two} we glossed over need to reenumerate this element when \( b_0 \) was enumerated).   Thus, at \( s_1 \) we comply with the commitments made for \req{P}{i} at \( s_0 -1 \).  Of course, \( z \) could still decide to finally enter \( \REset(A){i} \) at this point but, since \( z \) wasn't in  \( \REset(A){i} \)  at stage \( s_0 -1 \), we are no longer committed to keeping \( \codeY[s_{-1}]<i>{z} \) out of \( \setcol{Y_i}{3} \) allowing us to preserve the computation required by \req{P}{i}.  Finally, if we enumerate \( a \), we again rely on the fact that it's  \( A \Tplus+ Y_i \) that's \REA[3] to allow \( a \) to cancel both  \( \codeY[s_{-1}]<i>{z} \) and \( b_0 \) returning both sides to stages compatible with those they had at stage \( s_1 -1 \).  Note that if, instead, \( z \) had been out of \( \REset(A){i} \) at stage \( s_1 -1 \)  and then entered \( \REset(A){i} \) after the enumeration of \( a \) we could respond by enumerating \( \codeY[s_{-1}]<i>{z} \) just as we did when \( z \) entered  \( \REset(A){i} \) after the enumeration of \( b_0 \) but before \( a \).

\subsection{General Agreement}
\label{sec:overview:general-agreement}

In \cref{sec:overview:simplified-agreement} we demonstrated how to meet \( \req{R}{j,e} \) without injuring a single requirement \req{P}{i} on the assumption that only a single value \( z \) ever enters or leaves \( \REset(A){i} \).  However, in the general case we must meet \( \req{R}{j,e} \) without injuring any of the finitely many higher priority requirements \( \req{P}{i_0}, \req{P}{i_1}, \req{P}{i_2}, \ldots \).  We must also accommodate arbitrarily many elements entering and leaving the sets \( \REset(A){i_n} \).

\subsubsection{Multiple elements entering some \( \REset(A){i_0} \)}

First we explain the relatively easy task of dealing with multiple elements entering and leaving the sets \( \REset(A){i_n} \).  By the discussion in \cref{sec:overview:agreement} the enumeration of \( c_k \) cancels any commitments we've made about \( \Gfunc{i}{} \) after stage \( s_{-1} \).  Hence, if \( z \geq s_{-1} \) then \cref{def:funcs} means that at \( s_{-1} \) we've yet to make any commitment about \( z \) (i.e. about the value of \( \Gfunc{i}{} \) on  \( \codeY[t]<i>{z} \)).  This leaves us with finitely many values we must be concerned with entering or leaving any \( \REset(A){i} \) when trying to achieve agreement.  Note that if we ever see \( z < z' \) enter \( \REset(A){i} \) after stage \( s_{-1} \) this cancels any commitments we've made regarding \( z' \) allowing us to freely code changes in \( \REset(A){i}(z') \) by enumerating elements into \( \setcol{Y_i}{3} \).  Morally speaking, this means it's only the least value of \( z \) entering \( \REset(A){i} \) in the interval \( [s_k, s_{k+1} -1] \) which matters so we need only tweak the strategy above to deal with multiple values entering/leaving some \( \REset(A){i} \).    

 More specifically, we eventually find \( k' < k \) such that  the least  \( z \nin \REset[s_{-1}](A){i} \) which is in some \( \REset[s_{k''}-1](A){i}, k' \leq k'' < k \) also is in \( \REset[s_{k}-1](A){i} \) or eventually all \( z \nin \REset[s_{-1}](A){i} \) aren't in \( \REset[s_{k}-1](A){i}  \).  In the later case we have \( s_k - 1 \Yagree<i> s_{k+1} -1 \)  as we are in the same situation depicted in \cref{fig:safe-satisfying}.  In the former case, as in \cref{fig:safe-satisfying-two}, we ensure \( s_{k'} - 1 \Yagree<i> s_{k} -1 \) by making enumerations into \( \setcol{Y_i}{1} \) when we see a new least \( z \nin  \REset[s_{-1}](A){i} \) enter \( \REset(A){i} \)  that cancels all changes since the last stage \( s_{k''} - 1 \) with \( z \in  \REset[s_{k''}-1](A){i} \) and making whatever corrections needed to code the state of \( z' > z \) by enumeration into \( \setcol{Y_i}{3} \).  Note that no problem is introduced if, in response to the entry of \( z'  \) into \( \REset(A){i} \) at stage \( s \in (s_{k- 1}, s_k) \)  we make an enumeration into \( \setcol{Y_i}{1} \)  to cancel all changes between \( s_{k'} - 1 \) and \( s \).  Such enumerations may prevent ever achieving agreement with some \( s_{k''} - 1, k' < k'' < k \) (as we can never remove an element from \( \setcol{Y_i}{1} \)) but that's not a problem since the canceled stages are ones in which no \( z'' < z \) enters \( \REset(A){i} \).

\subsubsection{Multiple \req{P}{i} requirements}

 {

\newcommand{\divider}{\, \rule[1cm]{1pt}{2cm} \,}
\newcommand{\mylabel}{\node [below=1.5cm,xshift=2cm]}
 \NewDocumentCommand{\fillAllToLevel}{O{#2}m}{
 \REAmatrixFillRect<fill=LightGray>(a){#2-1}{6-3}
  \REAmatrixFillRect<fill=LightGray>(w1){#2-1}{6-1}
  \REAmatrixFillRect<fill=LightGray>(y1){#1-1}{6-3}
    \REAmatrixFillRect<fill=LightGray>(w0){#2-1}{6-1}
  \REAmatrixFillRect<fill=LightGray>(y0){#1-1}{6-3}
  }

 \begin{figure}
    \centering
        \fbox{\fbox{\begin{tikzpicture}
        \REAmatrix(a)<{\( A \) }>{6}{3}
        \REAmatrix(w0)<{\( \REset(A){i_0} \) }>[right = 0cm of a]{6}{1}
        \REAmatrixEnum{6-1}
         \REAmatrix(w1)<{\( \REset(A){i_1} \) }>[right = 0cm of w0]{6}{1}
          \REAmatrixEnum{6-1}
         \REAmatrix(y0)<{\( Y_{i_0} \) }>[right=0cm of w1]{6}{3}
         \REAmatrixEnum{6-3}
         \REAmatrix(y1)<{\( Y_{i_1} \) }>[right=0cm of y0]{6}{3}
         \REAmatrixEnum{6-3}
         \fillAllToLevel[5]{6}
        \mylabel {Stage \( s_0 - 1 \), \( z \in \REset(A){i_0} \) first. };
    \end{tikzpicture}}} \divider
    \begin{tikzpicture}
        \REAmatrix(a)<{\( A \) }>{6}{3}
        \REAmatrixEnum{6-3}
        \REAmatrix(w0)<{\( \REset(A){i_0} \) }>[right = 0cm of a]{6}{1}
        \REAmatrixEnum{6-1}
         \REAmatrix(w1)<{\( \REset(A){i_1} \) }>[right = 0cm of w0]{6}{1}
         \REAmatrixEnum{6-1}
         \REAmatrix(y0)<{\( Y_{i_0} \) }>[right=0cm of w1]{6}{3}
         \REAmatrixEnum{6-3}
         \REAmatrixEnum{5-2}
         \REAmatrix(y1)<{\( Y_{i_1} \) }>[right=0cm of y0]{6}{3}
        \REAmatrixEnum{6-3}
         \REAmatrixEnum{5-2}
         \fillAllToLevel[4]{5}
        \mylabel {Stage \( s_1 - 1 \), \( z \in \REset(A){i_1} \) first. };
    \end{tikzpicture} \\
        \begin{tikzpicture}
        \REAmatrix(a)<{\( A \) }>{6}{3}
        \REAmatrixEnum{6-3}
         \REAmatrixEnum{5-3}
        \REAmatrix(w0)<{\( \REset(A){i_0} \) }>[right = 0cm of a]{6}{1}
         \REAmatrix(w1)<{\( \REset(A){i_1} \) }>[right = 0cm of w0]{6}{1}
         \REAmatrixEnum{6-1}
         \REAmatrix(y0)<{\( Y_{i_0} \) }>[right=0cm of w1]{6}{3}
         \REAmatrixEnum[0]{6-3}
         \REAmatrixEnum{5-2}
         \REAmatrixEnum{4-2}
         \REAmatrix(y1)<{\( Y_{i_1} \) }>[right=0cm of y0]{6}{3}
        \REAmatrixEnum{3-1}
         \REAmatrixEnum{5-2}
         \REAmatrixEnum[0]{4-2}
          \REAmatrixEnum{6-3}
         \fillAllToLevel[3]{4}
        \mylabel {Stage \( s_2 - 1 \), \( z \in \REset(A){i_1} \) first. };
    \end{tikzpicture} \divider
    \fbox{\begin{tikzpicture}
        \REAmatrix(a)<{\( A \) }>{6}{3}
        \REAmatrixEnum{6-3}
         \REAmatrixEnum{5-3}
         \REAmatrixEnum{4-3}
        \REAmatrix(w0)<{\( \REset(A){i_0} \) }>[right = 0cm of a]{6}{1}
        \REAmatrixEnum{6-1}
         \REAmatrix(w1)<{\( \REset(A){i_1} \) }>[right = 0cm of w0]{6}{1}
         \REAmatrix(y0)<{\( Y_{i_0} \) }>[right=0cm of w1]{6}{3}
         \REAmatrixEnum{6-3}
         \REAmatrixEnum[0]{5-2}
         \REAmatrixEnum[0]{4-2}
         \REAmatrixEnum{4-1}
         \REAmatrix(y1)<{\( Y_{i_1} \) }>[right=0cm of y0]{6}{3}
        \REAmatrixEnum[0]{6-3}
         \REAmatrixEnum{3-1}
         \REAmatrixEnum{5-2}
          \REAmatrixEnum{3-2}
         \fillAllToLevel[2]{3}
        \mylabel {Stage \( s_3 - 1 \), \( z \in \REset(A){i_0} \) first. };
    \end{tikzpicture}} \\ { 
     \RenewDocumentCommand{\fillAllToLevel}{O{#2}m}{
 \REAmatrixFillRect<fill=LightGray>(a){#2-1}{7-3}
  \REAmatrixFillRect<fill=LightGray>(w1){#2-1}{7-1}
  \REAmatrixFillRect<fill=LightGray>(y1){#1-1}{7-3}
    \REAmatrixFillRect<fill=LightGray>(w0){#2-1}{7-1}
  \REAmatrixFillRect<fill=LightGray>(y0){#1-1}{7-3}
  }%
    \begin{tikzpicture}
        \REAmatrix(a)<{\( A \) }>{7}{3}
        \REAmatrixEnum{7-3}
         \REAmatrixEnum{6-3}
         \REAmatrixEnum{5-3}
         \REAmatrixEnum{4-3}
        \REAmatrix(w0)<{\( \REset(A){i_0} \) }>[right = 0cm of a]{7}{1}
        \REAmatrixEnum{7-1}
         \REAmatrix(w1)<{\( \REset(A){i_1} \) }>[right = 0cm of w0]{7}{1}
         \REAmatrixEnum{7-1}
         \REAmatrix(y0)<{\( Y_{i_0} \) }>[right=0cm of w1]{7}{3}
         \REAmatrixEnum{7-3}
         \REAmatrixEnum{3-2}
         \REAmatrixEnum{5-1}
         \REAmatrix(y1)<{\( Y_{i_1} \) }>[right=0cm of y0]{7}{3}
        \REAmatrixEnum{7-3}
         \REAmatrixEnum{6-2}
          \REAmatrixEnum{4-1}
          \REAmatrixEnum{4-2}
         \fillAllToLevel[2]{3}
        \node [below=1.7cm,xshift=2cm]  {Stage \( s_4 - 1  \), \( z \in \REset(A){i_1} \) first. };
    \end{tikzpicture} \divider \hspace{.15em}
        \fbox{\begin{tikzpicture}
        \REAmatrix(a)<{\( A \) }>{7}{3}
        \REAmatrixEnum{7-3}
         \REAmatrixEnum{6-3}
         \REAmatrixEnum{5-3}
         \REAmatrixEnum{4-3}
         \REAmatrixEnum{3-3}
        \REAmatrix(w0)<{\( \REset(A){i_0} \) }>[right = 0cm of a]{7}{1}
        \REAmatrixEnum{7-1}
         \REAmatrix(w1)<{\( \REset(A){i_1} \) }>[right = 0cm of w0]{7}{1}
         \REAmatrixEnum{7-1}
         \REAmatrix(y0)<{\( Y_{i_0} \) }>[right=0cm of w1]{7}{3}
         \REAmatrixEnum{7-3}
         \REAmatrixEnum[0]{3-2}
         \REAmatrixEnum{5-1}
         \REAmatrixEnum{2-1}
         \REAmatrix(y1)<{\( Y_{i_1} \) }>[right=0cm of y0]{7}{3}
        \REAmatrixEnum{7-3}
         \REAmatrixEnum{6-2}
          \REAmatrixEnum{4-1}
         \REAmatrixEnum{2-2}
          \REAmatrixEnum{4-2}
        \fillAllToLevel[1]{2}
        \node [below=1.7cm,xshift=2.1cm] {Stage \( s_5 - 1 \Yagree s_3 -1  \), \( z \in \REset(A){i_0} \) first. };
    \end{tikzpicture}}
}
       \caption{Agreement Respecting \( \req{P}{i_0} \) and \( \req{P}{i_1} \) }
    \label{fig:two-Pi-requirement-interaction}
\end{figure}
}

 {

\newcommand{\divider}{\, \rule[1cm]{1pt}{2cm} \,}
\newcommand{\mylabel}{\node [below=1.5cm,xshift=2cm]}
 \NewDocumentCommand{\fillAllToLevel}{O{#2}m}{
 \REAmatrixFillRect<fill=LightGray>(a){#2-1}{6-3}
  \REAmatrixFillRect<fill=LightGray>(w1){#2-1}{6-1}
  \REAmatrixFillRect<fill=LightGray>(y1){#1-1}{6-3}
    \REAmatrixFillRect<fill=LightGray>(w0){#2-1}{6-1}
  \REAmatrixFillRect<fill=LightGray>(y0){#1-1}{6-3}
  }

 \begin{figure}
    \centering
\begin{tikzpicture}
        \REAmatrix(a)<{\( A \) }>{6}{3}
        \REAmatrixEnum{6-3}
         \REAmatrixEnum{5-3}
         \REAmatrixEnum{4-3}
        \REAmatrix(w0)<{\( \REset(A){i_0} \) }>[right = 0cm of a]{6}{1}
        \REAmatrixEnum{6-1}
         \REAmatrix(w1)<{\( \REset(A){i_1} \) }>[right = 0cm of w0]{6}{1}
         \REAmatrixEnum{6-1}
         \REAmatrix(y0)<{\( Y_{i_0} \) }>[right=0cm of w1]{6}{3}
         \REAmatrixEnum{6-3}
         \REAmatrixEnum{5-2}
         \REAmatrixEnum{4-2}
         \REAmatrix(y1)<{\( Y_{i_1} \) }>[right=0cm of y0]{6}{3}
        \REAmatrixEnum{2-1}
         \REAmatrixEnum{3-1}
         \REAmatrixEnum{5-2}
          \REAmatrixEnum{6-3}
          \REAmatrixEnum[0]{3-2}
         \fillAllToLevel[2]{3}
        \mylabel {Stage \( s_3 - 1 \Yagree s_2 -1 \), \( z \in \REset(A){i_1} \) first. };
    \end{tikzpicture} \divider
        \fbox{\fbox{\begin{tikzpicture}
        \REAmatrix(a)<{\( A \) }>{6}{3}
        \REAmatrixEnum{6-3}
         \REAmatrixEnum{5-3}
         \REAmatrixEnum{4-3}
        \REAmatrix(w0)<{\( \REset(A){i_0} \) }>[right = 0cm of a]{6}{1}
        \REAmatrixEnum{6-1}
         \REAmatrix(w1)<{\( \REset(A){i_1} \) }>[right = 0cm of w0]{6}{1}
         \REAmatrixEnum{6-1}
         \REAmatrix(y0)<{\( Y_{i_0} \) }>[right=0cm of w1]{6}{3}
         \REAmatrixEnum{6-3}
         \REAmatrixEnum[0]{5-2}
         \REAmatrixEnum[0]{4-2}
         \REAmatrixEnum{4-1}
         \REAmatrix(y1)<{\( Y_{i_1} \) }>[right=0cm of y0]{6}{3}
        \REAmatrixEnum{6-3}
         \REAmatrixEnum{3-1}
         \REAmatrixEnum[0]{5-2}
         \REAmatrixEnum{4-1}
          \REAmatrixEnum{3-2}
         \fillAllToLevel[2]{3}
        \mylabel {Stage \( s_3 - 1 \), \( z \in \REset(A){i_0} \) first. };
    \end{tikzpicture}}}

       \caption{Alternative agreements respecting \( \req{P}{i_0} \) and \( \req{P}{i_1} \) }
    \label{fig:alt-two-Pi-requirement-interaction}
\end{figure}
}

This leaves us with the harder problem of dealing with multiple requirements \( \req{P}{0}, \req{P}{1}, \ldots, \req{P}{\pair{j}{e}} \) at the same time.    

Unlike the case with multiple values entering a single set, \( \REset(A){i} \) here entry into \( \REset(A){i} \) doesn't cancel any commitments made about \( \Gfunc{i'}{}, i' \neq i\).  Thus, when we see \( z \) enter \( \REset(A){i} \) we must immediately decide how to code this change in \( Y_i \) (by enumerating into \( \setcol{Y_i}{3} \) or  cancelling a removal from it by enumerating into \( \setcol{Y_i}{1} \))  without yet knowing what elements may later enter some \( \REset(A){i'} \).  We now describe how we handle this problem under the simplifying assumption that there is only a single value \( z \) which may enter any set \( \REset(A){i} \) and \( z \nin \REset[s_{-1}](A){i} \)\footnote{If \( z \in \REset[s_{-1}](A){i} \) then \( z \) won't leave \( \REset(A){i} \) as a result of any enumeration we make in trying to meet \req{R}{j,e} ensuring that coding the status of \( z \) isn't in tension with our attempt to meet \req{R}{j,e}.} where \(i \leq \pair{j}{e} \).   As there is only a single value of \( z \) in play we will only ever need to make use of a single location  \( \codeY<i>{z} \) to code whether or not \( z \in \REset(A){i}  \) (i.e. if we see \( z \) leave \( \REset(A){i}   \) we always remove  \( \codeY<i>{z} \) from \( \setcol{Y_i}{3} \)) and WLOG we can assume that value is independent of \( i \) and just write \( \codeY{z} \).

We formulate the problem slightly more generally as trying to meet \( \req{R}{j,e} \) while respecting the \( n + 1 \) requirements \( \req{P}{i_0}, \req{P}{i_1}, \ldots, \req{P}{i_n} \) (i.e., assuming that agreement is automatic for \( i \) not in \( \set{i_0, \ldots i_n}{} \))   and then argue that we can reduce this to the problem of respecting \( n \) requirements.   Since we've assumed that \( z \nin \REset[s_{-1}](A){i_j}, j \leq n \) at each stage \( s_k \) where we enumerate \( c_k \) into \( \setcol{A}{3} \) this has the effect of removing \( z \) from \( \REset[s_{k}](A){i_j}, j \leq n \).  To ensure that the sets \( Y_{i_j} \) reflect this we enumerate \( l_{t} \) into \( \setcol{Y_{i_j}}{2} \) if \( z \in \REset[s_k -1](A_{s_k -1}){i_j} \) where \( t \) is the stage at which \( \codeY{z} \) entered \( \setcol{Y_{i_j}}{1} \) in the interval \( (s_{k-1}, s_k) \) .  Thus, at each stage \( s_k \) \( z \) isn't in any of the sets \( \REset(A){i_j} \) nor are the elements \( \codeY{z} \).    Note that, in the case where \( z \nin \REset[s_k -1](A_{s_k -1}){i_j} \) for all \( j \leq n \) we don't enumerate any elements into \( Y_{i_j} \) at stage \( s_k \) meaning that we automatically have \( s_k -1 \Yagree s_k \) and, by the remarks at the end of \cref{sec:overview:simplified-agreement}, once established we can maintain this agreement until stage \( s_{k+1} -1 \) as we only need to enumerate elements into the third column of the sets \( Y_{i_j} \) to respect the requirements \( \req{P}{i_0}, \req{P}{i_1}, \ldots, \req{P}{i_n} \). 

What we must do is describe a strategy for reacting to the enumeration of \( z \) into \( \REset(A){i_j} \) during the interval \( (s_{k}, s_{k+1}) \) (the interval between \( c_k \) and \( c_{k+1} \) entering \( \setcol{A}{3} \)) that ensures we will eventually produce agreement.    The key factor that determines how we act to try and produce agreement during the interval \( (s_{k}, s_{k+1}) \) is which of the sets \( \REset(A){i_0}, \REset(A){i_1}, \ldots, \REset(A){i_n} \) we see \( z \) enter first.   To this end, define \( u^j_m \) so that \( (s_{u^j_m}, s_{u^j_m + 1} \) is the \( m \)-th interval on which \( z \) first enters \( \REset(A){i_j} \).   

Our argument is essentially a priority argument in which we give highest priority to producing agreement between intervals \( (s_{k}, s_{k+1}) \) in which the first set  \( z \) enters is \( \REset(A){i_0} \).  We give second highest priority to producing agreement between intervals in which \( z \) first enters  \( \REset(A){i_1} \) and so on.   Specifically, during the interval  \( (s_{u^0_m}, s_{u^0_m + 1}) \) we will attempt to produce agreement with the stages \( s_{u^0_0 + 1} -1, s_{u^0_1 + 1} -1, \ldots, s_{u^0_{m -1} + 1} -1  \) ignoring any intervening intervals in which \( z \) first entered some other \re set in our list.  However, as when we service a higher priority requirement in a finite injury argument, when we first see \( z \) enter  \( \REset(A){i_0} \) we abandon our attempts to produce agreement on intervals where \( z \) first enters some other \re set.  Thus, during the interval \( (s_{u^1_m}, s_{u^1_m + 1}) \) we will only attempt to produce agreement with the stages \( s_{u^1_{m'} + 1} -1, s_{u^1_{m' + 1} + 1} -1, \ldots, s_{u^1_{m -1} + 1} -1  \) where \( m' \) is the least value such that there is no \( k \) with  \( s_{u^1_{m'} + 1} -1 <   s_{u^0_k}  < s_{u^1_{m -1} + 1} -1   \).  Similarly, in the interval \( (s_{u^2_m}, s_{u^2_m + 1} \) we only try to produce agreement with the stages \( s_{u^2_{m''} + 1} -1 \) which have occurred since the last interval in which \( z \) first entered  \( \REset(A){i_0} \) or \( \REset(A){i_1} \) and so on.   Of course, if we already have agreement with some earlier stage \( s_k -1 \)  (e.g. because \( z \) didn't enter any of our \re sets in the previous interval) we simply maintain it regardless of which set is entered first.

Thus, if \( s \) is the first stage in \( (s_k, s_{k+1})  \)  at which we see \( z \) enter some \( \REset(A){i_j} \) with  there is a (potentially empty) list of earlier stages\footnote{For instance, if \( z \) first enters  \( \REset(A){i_0} \) and \( (s_k, s_{k+1}) =  (s_{u^0_m}, s_{u^0_m + 1})  \)  then \( k^{s}_q  = u^{0}_q, q < m \) and if \( z \) first enters  \( \REset(A){i_1} \) and \( (s_k, s_{k+1}) =  (s_{u^1_m}, s_{u^1_m + 1})  \)  then \( k^{s}_q  = u^{1}_{m' + q}, m' + q < m \). }   \( s_{k^{s}_0 + 1}  - 1, \ldots s_{k^{s}_r + 1} - 1 \) which we will try to produce agreement with.  If this list is non-empty then we immediately\footnote{Stage \( s+1 \) since we interleave stages at which enumeration into  \( A \) and sets \re in \( A \) are seen.} and enumerate \( l_{t}, t = s_{k^{s}_r + 1}  \) into \( \setcol{Y_{i_j}}{1} \) to cancel all elements enumerated into \( Y_{i_j} \) at intervening stages.  This enumeration also cancels the element enumerated into   \( \setcol{Y_{i_j}}{2} \) at stage \( t \) and thus returns \( \codeY{z} \) into   \( \setcol{Y_{i_j}}{3} \).

This has the effect of ensuring that \(s_{k^{s}_0 + 1}  - 1 \Yagree<i_j> s_{k^{s}_1 + 1}  - 1 \Yagree<i_j>   \ldots \Yagree<i_j>   s_{k^{s}_r + 1} - 1 \Yagree<i_j>  s \).   Note that if we are only trying to respect a single requirement  \req{P}{i_0} this strategy ensures we always achieve agreement.  Either \( z \) doesn't enter \( \REset(A){i_0} \) during the interval \( (s_{-1}, s_0) \) in which case we achieve the free win mentioned above or during the interval \( (s_0, s_1) \) then the list of prior stages we are attempting to agree with is non-empty ensuring that \( s_0 -1 \Yagree s_1 -1 \). 

Now we notice that the argument we just gave depends only on having a sequence of intervals \( (t'_q, t_{q}) \) with \( t_{q - 1} \leq t'_{q} \leq t_{q} \)  such that at every stage \( t_q \) we have \( z \nin \REset(A){i_j}, j \leq n \), \( \codeY{z} \nin \setcol{Y_{i_j}}{3} \), all elements enumerated into \( Y_{i_j} \) between  \( t_{q} \) and \( t'_{q+1} \) are larger \( l_{t_q} \) and that, if \( z \) enters \( \REset(A){i_j}  \) in \((t'_q, t_{q})  \)  for some \( j \leq n \) then enumerating \( l_{t_q} \) into \( \setcol{Y_{i_j}}{1} \) restores  \( Y_{i_j}\colrestr{l_{t_q -1}} \) to the state it had at stage \( t_q - 1 \).  Finally, we observe that if \( (t'_q, t_{q}), q \leq m \)  is such that   \( t_q = u^j_{q + m'}, q + m' \leq m \) where, as above, \( z \) doesn't first enter any \( \REset(A){i_e}, e < j \) during any interval between \( u^j_{m'} \) and \( u^j_{m} \) and \( t'_q \) is the stage at which \( z \) enters \( \REset(A){i_j} \) in the interval ending at stage \( t_q \) satisfies these properties.  That is, \( t_q -1, q < m - m' \) is the sequence of stages that we try to achieve agreement with when we see \( z \) first enter \( \REset(A){i_j} \) during the interval ending with \( t_{m - m'} \).  Thus, when see \( z \) first enter \( \REset(A){i_j} \) in an interval we can then run the same strategy on the indexes in \( \set{i_0, \ldots, i_n}{} - \set{i_j} \) on these reduced intervals.

The notation gets a bit heavy in that last discussion so to make this concrete consider \cref{fig:two-Pi-requirement-interaction} where we give an example of trying to produce agreement while respecting the two requirements \req{P}{i_0} and \req{P}{i_1}.  In the figure we've only shown the final stage of intervals and we've boxed (including the double box) the stages which correspond to intervals in which  \( z \) first enters \( \REset(A){i_0} \).  Note that, on the intervals ending at stage \( s_1, s_2 \) where \( z \) first enters \( \REset(A){i_1} \) we have \( s_1 -1 \Yagree<i_1> s_2 -1 \) and  the behavior of \( Y_{i_0} \) looks just like the strategy we would follow if we were dealing with only the requirement \req{P}{i_0}.  However, when we hit the interval ending at \( s_3 \) in which \( z \) first enters \( \REset(A){i_0} \) this attempt is injured so in the interval ending \( s_4 \), \( Y_{i_1} \) starts afresh and doesn't try to create agreement with stage \( s_2 -1 \).   

Furthermore, note that at the boxed stages (the last stages of intervals at which \( z \) first enters \( \REset(A){i_0} \)) agreement works as if the unboxed stages don't exist.  Thus, since we have \( s_0 -1 \Yagree<i_0> s_3 -1 \Yagree<i_0> s_5 -1 \) just as in the case where we dealt with only a single requirement \req{P}{i_1} we eventually achieve agreement on that index as well ensuring \( s_3 -1 \Yagree s_5 -1  \).  Note that in \cref{fig:alt-two-Pi-requirement-interaction} we give two alternate ways that the interval ending at stage \( s_3 \) could have played out.  On the left we see that if we had a long enough consecutive sequence of intervals in which \( z \) first entered \( Y_{i_1} \) that too would have produced agreement while, on the right, we see how we would have achieved agreement between the double boxed stages had \( z \) entered \( Y_{i_1} \) during the interval \( )s_2, s_3) \) after \( z \) entered \( Y_{i_0} \).

Finally, one might wonder how, even though a smaller value of \( z \) entering  \( \REset(A){i_j} \) subsumes larger values entering the same set how we can handle the case where we first see the larger values enter \( \REset(A){i_j} \) when we don't yet know if the smaller value will later enter.  We can handle this the same way we do with multiple sets where seeing \( 0 \) enter  \( \REset(A){i_0} \) first can injure seeing \( 1 \) first enter \( \REset(A){i_0} \) and so forth.  Though, by the discussion above, we need not worry about the case where the larger number enters during an interval after the smaller number.

Now that we've outlined the general approach we give the full construction.  The complexity involved in this construction largely reflects the difficulty involved in doing accurate bookkeeping for the approach described in this section in the case where we must deal both with achieving agreement with respect to multiple values of \(i \) and multiple elements entering and leaving the associated \re sets.

\section{Full Construction}\label{sec:full}

\subsection{General Framework}

The construction takes the form of a finite injury argument with the module \( \module{P}{i} \) tasked with meeting \req{P}{i} having priority \( 2i \) and the module \module{R}{j,e} tasked with meeting \req{R}{j,e} having priority \( 2\pair{j}{e} +1 \).  Modules of the form \( \module{P}{i} \) merely define \( \Gfunc{i}{} \)  as specified in \cref{def:funcs} and thus have no direct effect on any set or module.  In contrast, any time the module \module{R}{j,e} acts all lower priority modules are reinitialized.   

At any even stage \( s \)  at most one requirement of the form  \req{R}{j,e} with priority at most \( 2s - 1 \)  acts while (at most one per stage) enumerations into \( \REset(A){i} \) occur only at odd stages.   In response to an enumeration into \( \REset(A){i} \) we give each requirement (in decreasing order of priority) the chance to act and claim the enumeration (injuring lower priority requirements) and enumerate elements into \( Y_i \) in response.  We apply the following rule for unclaimed enumerations. 

\begin{crule}\label{rule:unclaimed}
 If  \( z  \) enters  \(  \REset(A){i} \) at stage \( s \) but goes unclaimed then  we enumerate \( \codeY<i>[s]{z} \) into \( \setcol{Y_i}{3} \). 
\end{crule}

\subsection{Module \( \module{R}{j,e} \) }\label{sec:full:Rje}

We break up the description of the module \module{R}{j,e} into two parts.  First, we describe the behavior of the module \module{R}{j,e} at even stages when it may act of it's own volition to modify \( A \) and possibly the sets \( Y_i \).  Then, later, we describe action of the module \module{R}{j,e} at odd stages in response to an enumeration into a set \( \REset(A){i} \).   

\newcommand*{\Rstate}[2]{\ensuremath{\mathcal{S}_{#1}(\module{R}{#2})}}

\subsubsection{Even Stages}\label{sec:full:even}

The module \module{R}{j,e}  tasked with meeting \( \req{R}{j,e} \) has four \( 4 \) states, \( \diverge, 0,1,2,3 \) where \( \diverge \) indicates \module{R}{j,e} has yet to execute after (re)initialization and  the later numbers (roughly) indicate the number of times we've changed our mind about whether \( c \in \setcol{A}{3} \).   We denote the state of the module \module{R}{j,e}  at the end of stage \( s \) by  \( \Rstate{s}{j,e} \) and assume that the state remains the same at stage \( s+1 \) unless otherwise noted.  Note that, in state \( 1 \) we've enumerated at least one potential value for \( c_k \)  for \( c \) into \( \setcol{A}{3}  \) but we may not yet have enumerated the ultimate value \( c \) takes  into \( \setcol{A}{3}  \).   If \req{R}{j,e} is injured at stage \( s \) we set \( \Rstate{s}{j,e} = \diverge \). 

We describe the behavior \module{R}{j,e}  at stage \( s \) where we may assume that the following all hold.
\begin{itemize}
    \item \( s \) is even, \( \pair{j}{e} < s \) and no higher priority requirement has chosen to act (injuring \req{R}{j,e}) at stage \( s \), i.e., we execute this module at stage \( s \). 
    \item \( s_{-1} \leq s \) is the first even stage after the most recent reinitialization of \module{R}{j,e}.
    \item The element \( c_{k}, k > 0 \) is defined to be \( \pair{\pair{j}{e}}{v +k } \) if \( c_0 = \pair{\pair{j}{e}}{v} \).
    \item \( c_k \)  is enumerated into \( \setcol{A}{3} \) at stage \( s_{k} =t_k +1 > s_{-1} \) and \( m \) is the smallest value for which \( s_m \) is undefined and less than \( s \) (thus \( c_m \) is the next element waiting to be enumerated into \( \setcol{A}{3} \)).  
    \item \( b_k, k < m \) are values chosen large at stage \( s_{k} \) satisfying \( b_k \nin \setcol{A_{s_{k}}}{n} \) and \( b_k \nin \setcol{Y_i}{n} \) for any \( i \) and \( n \leq 2 \) .  Note that this implies \( l_{s_k - 1} < b_k < l_{s_k} \).        
\end{itemize}

\begin{procedure}[Even Stages of \( \module{R}{j,e} \)]  \label{proc:even}\hfil \\
\begin{pfcases*}
\case[\( \Rstate{s-1}{j,e} = \diverge \) ]  We act by choosing \( c_0 = \pair{\pair{j}{e}}{v}  \) where \( v \) is chosen large and setting  \( \Rstate{s}{j,e} = 0 \).

\case[\( \Rstate{s-1}{j,e} = 0  \) ]  If \( \SelfComp{c_0}{s-1} \) holds then we act by enumerating \( c_0 \) into  \( \setcol{A}{3} \), executing \cref{proc:expansionary}  and setting \( \Rstate{s}{j,e} = 1  \)

\case[\( \Rstate{s-1}{j,e} = 1  \)]\label{modR:state-one}  We execute the following steps.
\begin{steps}
    \step If \( \SelfComp{c_m}{s - 1} \) fails to hold end the stage without acting.  Otherwise the module acts by executing the subsequent steps.

    \step\label{proc:even:state-one:change-to-two} If there is \( k < m  \) with \(  s_{k} - 1 \Yagree s - 1  \) perform the following steps and end the stage. 
        \begin{steps}
            \step Choose \( a \nin \setcol{A}{1} \) large.
            \step Set \( c = c_k, b = b_k, \hat{s}_0 = s_{k} - 1, \hat{s}_1 = s - 1 \).
            \step Enumerate \( b \) into \( \setcol{A}{2} \) and set \( \Rstate{s}{j,e} = 2  \)
        \end{steps}
    \step If there is no such \( k \) we instead enumerate \( c_m \) into  \( \setcol{A}{3} \) and execute \cref{proc:expansionary}

\end{steps}

\case[\( \Rstate{s-1}{j,e} = 2  \)]  If \( \SelfComp{c}{s-1} \) the module acts by enumerating \( a \) into \( \setcol{A}{1} \) and setting \( \Rstate{s-1}{j,e} = 3, \hat{s}_2  = s - 1 \).  

\case[\( \Rstate{s-1}{j,e} = 3  \)] Once in state \( 3 \) the module never acts again. 

\end{pfcases*}
\end{procedure}

We now specify the procedure we referenced above when we enumerated \( c_k \).  As we want to maintain \cref{cond:Y-approx:compute-right} (correctness of our functionals) at all active stages we must return  \( Y_i \), by potentially enumerating an element into \( \setcol{Y_i}{2} \), to a state compatible with \( Y_{i,s_{-1}} \).  However, what, if any,  element must be enumerated into \( \setcol{Y_i}{2} \) at \( s_k \)  is determined by how we choose to respond to enumerations into \( \REset(A){i} \) so we hand that task of keeping track of that value to the machinery which responds to those enumerations.  Specifically,  for each \(i \in \omega \)  we define (relative to \req{R}{j,e}) a marker \( \bY{i} \in  \omega \union \set{\diverge}{} \)  with initial value \( \bY[s_{-1}-1]{i} = \diverge \).  At active stages we'll just trust that we've placed this marker on the right value and do the following.

\begin{procedure}[Resetting \( Y_i \)] \label{proc:expansionary}
Execute the following steps when called:
\begin{steps} 
\step \label{proc:expansionary:enum-bY}  For every \( i < \pair{j}{e} + 1\) enumerate \( \bY{i} \) into  \( \setcol{Y_i}{2} \) if \( \bY{i}\conv \) and update \( \bY{i}  \) to \(  \diverge \). 
\step \label{proc:expansionary:fix-misc} If \(i > \pair{j}{e} \) or \( x \geq s_{-1} \)  then   \textbf{at the end of the stage} enumerate \( \codeY<i>[s]{x} \) into \( \setcol{Y_i}{3} \) if we would otherwise have \( \Tfunc{Y_{i,{s}}}(x)\conv \neq  \REset[s](A_s){i}(x)\conv \).
\end{steps}
\end{procedure}

Note that unless \( i, x < s \) no elements have ever been enumerated into \( \REset(A){i} \) so we need take no action to maintain agreement between \( \REset[s](A_s){i} \) and \( \Tfunc{Y_{i,{s}}} \).

\subsubsection{Odd Stages}\label{sec:full:odd}

It's now our task to ensure that there are two preactive stages \( t, t' \) that agree in the sense \( t \Yagree t' \).  Since we may injure lower priority modules we have no obligation to undo enumerations into \( Y_i, i \geq \pair{j}{e} +1 \) at active stages as we do with \( Y_i, i < \pair{j}{e} +1 \).  Thus, we can simply leave enumerations into \( Y_i, i \geq \pair{j}{e} +1 \) to lower priority modules (or leave them unclaimed) without concern they will block us from achieving the desired agreement.  On the other hand, if \( z > s_{-1} \) then the use of \( \Gfunc{i}{} \) for any element coding the status of \( z \) is large enough that we can redefine it every time some \( c_m \) is enumerated into \( \setcol{A}{3} \) by \module{R}{j,e}.  Finally, once \module{R}{j,e} enters state \( 2 \) we are too far along in the process for changes in \( \setcol{Y_i}{3} \) to migrate all the way to the first column in time to cause a problem.  We only claim those finitely many enumerations which don't fall into these easy cases.

\begin{crule}\label{rule:claim}
If \( z \entersat{s} \REset(A_s){i} \), \(i < \pair{j}{e} +1 \) then \( \module{R}{j,e} \) claims \( z \) just if  \( \Rstate{s-1}{j,e} \in \set{0,1}{}  \) and \( z < s_{-1} \).  
\end{crule}

Our active management in these cases is complicated by the fact that we may only modify the elements in  \( Y_i \) coding the status of \( z < s_{-1} \)   immediately in response to the enumeration of \( z \).  Our basic approach to deal with this problem is to use the order in which elements are enumerated into the sets \( \REset(A){i} \) during an interval \( [s_k, s_{k+1}-1] \) to determine our response.  However, our approach is only easily described in a recursive fashion. To that end we abstract away from the particular rule we will use to respond to enumeration and define the notion of a strategy.  A strategy is  a procedure with persistent state (i.e., a coroutine) that tells us how best to respond to an enumeration of \( z \) into the set \( \REset(A){i} \) given some finite set \( S \) of stages we are trying to produce agreement with  (or force future agreement).  

\NewDocumentCommand{\Strat}{D<>{}O{}mmm}{\mathscr{S}_{#2}^{#1}\ifthenelse{\isempty{#3#4#5}}{}{(#3, #4, #5)}}

We now define the notion of a procedure whose task is to tell us how to respond to an enumeration of \( z \) into \( \REset(A){i} \) at stage \( s \).    
\begin{definition}\label{def:explicit-strategy}
    A strategy \( \Strat{}{}{} \) is a computable function \( \Strat{s}{z}{i} \) (subroutine) with persistent state which enumerates a finite set of elements for entry into \( Y_i \) and recommends an update to \( \bY{i} \).  The persistent state of \( \Strat{s}{z}{i} \) consists of a number of variables which \( \Strat{s}{z}{i} \) updates and whose values persist across calls.  In addition to the explicitly given arguments and persistent state we also assume that \( \Strat{s}{z}{i} \) has access to a complete history of enumerations/recommendations made \footnote{Including enumerations/updates that some calling strategy has already committed to recommending but haven't yet been made.} prior to it's execution.        
\end{definition}

We will describe the behavior of a strategy by giving a procedure by which it updates it's persistent internal variables (initially set to \( \diverge \)) and leave it to the pedantic reader to translate such a description into a fully formal object.  We also need terminology for the stage (if any) in the intervals \( (s_{k-1}, s_k) \)  at which control is first passed to a particular strategy as follow.

\NewDocumentCommand{\ienum}{om}{#2^{\#\IfValueTF{#1}{(#1)}{}}}

\begin{definition}\label{def:initial-enum}
   A stage \( s \)  is an \textbf{initial enumeration} for \( \Strat{}{}{} \) (relative to \module{R}{j,e})  if  \( \Strat{}{}{} \) is executed at stage \( s \) in response to the enumeration of \( z \) into \( \REset(A){i} \) and if there is some \( k  \) such that \( s_{k-1} < s < s_{k} \) and \( \Strat{}{}{} \) was not executed during the interval \( (s_{k-1},s) \).  In such cases we also say that the initial  enumeration for \( \Strat{}{}{} \) during the interval  \( (s_{k-1}, s_{k}) \) was into \( \REset(A){i} \).  
\end{definition}

Our goal is to prove that, given any value of \( s_{-1} \) we can effectively produce a strategy \( \Strat{}{}{} \)   which, if followed whenever any \( z < s_{-1} \) enters some \( \REset(A){i}, i \leq \pair{j}{e} \), guarantees that after some finite number of preactive stages we'll succeed in producing agreement.

Specifically, we'll show that for any \( h:\omega \mapsto \omega \) with compact support there is a strategy \( \Strat<h>{}{}{} \) such that \( \Strat<h>{}{}{} \) is capable of producing agreement while respecting the requirements \req{P}{i} for \(i \in \supp h \) provided that the only elements \( z \)  entering or leaving \( \REset(A){i} \) satisfy \( h(i)> z \).  We will then argue that if for every \( h' \) with \( \int h' < \int h \) (relative to the counting measure) \( \Strat<h'>{}{}{} \) is guaranteed to win then so is  \( \Strat<h>{}{}{} \).  As when \( h \) is the zero function, there are no values of \( i, z \) with \( h(i)> z \)   \( \Strat<0>{}{}{} \) clearly satisfies the above assumption the desired result will follow by induction on \( \int h \).

First, however, we describe the procedure \( \Strat<h>{}{}{} \).  In this description, we style variables which persist across calls in Fraktur, e.g., \( \mathfrak{r},  \), and adopt the computer science convention that we specify temporary values via `let' while we update a variable in the strategy's internal state with `set'.  

\NewDocumentCommand{\Sb}{D<>{}O{}m}{\ensuremath{\mathfrak{b}^{#1}_{#3\IfValueTF{#2}{, #2}{}}}}

\begin{definition}\label{def:strat-h}
    The strategy \( \Strat<h>{}{}{} \) for \( h \in \omega^{\omega}  \) with \( \supp{h} = I = \set{i_0, \ldots, i_{N-1}}{} \) is defined to be the strategy with the following persistent internal variables (all of which are initialized to \( \diverge \)) which executes \cref{proc:odd} when called as\footnote{That is, \( \Strat<\text{arg}_0>{\text{arg}_1}{\text{arg}_2}{\text{arg}_3} \) follows the steps in \cref{proc:odd} with \( h = \text{arg}_0,  s = \text{arg}_1, z = \text{arg}_2, i_p = \text{arg}_3 \).  } \( \Strat<h>{s}{z}{i_p} \).
    \begin{itemize}
        \item \( \mathfrak{m}^{h}_r \in \omega, r < N \) which records the greatest non-canceled (i.e. accessible) preactive stage \( t_k \) at which initial enumeration for \( \Strat<h>{}{}{} \) was made into \( \REset(A){i_r} \).  Note that we may assume that \( \mathfrak{m}_r  \) actually records the value of \( k \) rather than \( t_k \) behind the seems and uses \( k \) to return \( t_k \) so that we may assign  \( t_{m+1} \) to  \( \mathfrak{m}_r  \) even when that stage hasn't yet happened.   
        \item  \( \mathfrak{S}^{h}_r, r < N  \) holds the current state of the substrategy tasked with producing agreement in response to initial enumerations made into \( \REset(A){i_r} \).
        \item \( \Sb<h>{i_r}, r < N \) identifies the value, if any, needed to cancel the values in \( Y_{i_r} \) coding the status of \( h(i_r) -1 \) (the only value  \( \Strat<h>{}{}{} \) is directly responsible for) at the next active stage.  \( \Strat<h>{}{}{} \) will recommend that \( \bY{i} \) be set to the least of \( \Sb<h>{i} \) and the recommendations of any sub-strategies.   
    \end{itemize}
\end{definition}

In what follows we drop the superscript \( h \) from persistent variables when clear from context.

\begin{procedure}  \label{proc:odd}

    Suppose that,
\begin{itemize}
    \item \( h \in \omega^{\omega} \land \supp{h} = I = \set{i_0 < i_1 < \ldots < i_N}{} \).
    \item \( m \) is largest with  \( t_m + 1 = s_m < s \).
    \item \( z< h(i_p), p \leq N \)
    \item \( z \entersat{s} \REset(A){i_p} \)
\end{itemize}
then \( \Strat<h>{s}{z}{i_p} \) behaves as follows when executed at stage \( s \).   Note that \( \Strat<h>{s}{z}{i_p} \) always returns the current value of \( \Sb{i_p} \) when it ceases execution as it's recommendation for \( \bY{i_p} \). 

\begin{steps}

    \step\label{proc:odd:if-agree} \fbox{If we project\footnotemark \( t_m \Yagree s \)}   Enumerate \( \codeY<i_p>[s]{z} \) into \( Y_{i_p} \) and exit any strategy (even parent strategies calling this one).  There is no need to recommend an update to \( \bY{i_p}\) since we are guaranteed agreement.  

\footnotetext{Projecting on the (valid) assumption that all recommendations for enumeration committed to by strategies executing before this one during stage \( s \) will be made.}

 \step\label{proc:odd:initial}  \fbox{If \( s \) is an initial enumeration for \( \Strat<h>{}{}{} \)} set \( \Sb{i_r}, r < N \) to be undefined and execute the following steps:
\begin{steps}
    \step Set \( \mathfrak{r} = p \) and for all \( r, N > r > p \) set  \( \mathfrak{m}_r \) and  \( \mathfrak{S}_r  \) to \( \diverge \). 

    \step Let \( n \) be such that  \( t_{n} = \mathfrak{m}_{p} \) and set \( \mathfrak{m}_p  \) equal to \(  t_{m+1} \) (yes, in the future).  For all \( r < p \) if \( \mathfrak{m}_r = \diverge \) set \( \mathfrak{m}_r = t_{m+1} \).   

    \item If \( \mathfrak{S}_p \) is undefined set \( \mathfrak{S}_{p} =  \Strat<h_{p}>{}{}{} \) with all persistent variables initialized to \( \diverge \) where  
    \begin{equation}\label{eq:hr} 
    h_{p}(i) \eqdef \begin{cases}
                                                h(i) -1  & \text{if } i = i_{p} \\
                                                h(i)  & \text{otherwise.}
                                           \end{cases} 
    \end{equation} 

    \item \label{proc:odd:initial:restart}  \fbox{If \( \mathfrak{m}_p  \) was undefined at the start of execution} then do the following. If \( z = h(i_p) - 1 \)  mark \( \codeY<i_p>[s]{z} \) for enumeration into \( Y_{i_p} \) and set \( \Sb{i_p} \) to a large value returning that as our recommendation for \( \bY{i_p} \).  If \( z < h(i_p) - 1 \) then execute \( \mathfrak{S}_p(s,z,i_p) = \Strat<h_{p}>{s}{z}{i_p} \) and let our recommendation for \( \bY{i_p} \) be whatever was returned by \( \mathfrak{S}_p(s,z,i_p) \).  In either case, cease all further execution and return to caller.  

    Only if both \( \mathfrak{S}_{p} \) and \( \mathfrak{m}_{p}  \) were defined at the start of execution will we pass beyond this point.

    \item\label{proc:odd:jump-back}  Enumerate the least \( q \nin \setcol{Y_{i_p}}{1} \) satisfying \( l_{t_n} < q < l_{t_{n}+1} \) into  \( \setcol{Y_{i_p}}{1} \).  This has the effect of returning \( Y_{i_p} \)  to the state it was in at stage \( \mathfrak{m}_{p} \).  

    Note that, if we've previously enumerated a \( q' \in (l_{t_n}, l_{t_{n}+1}) \) at some stage \( t \geq t_{n} + 1 \) it is enough to enumerate \( q < l_t \) so we need not worry about running out of values but for simplicity we'll assume\footnote{An assumption that will be vindicated by the calculation of an explicit bound on the number of preactive stages before agreement and thus on the total number of strategy executions which in turn gives us an explicit bound on the number of values we may need to enumerate between \( l_{t_n}  \) and \( l_{t_{n}+1} \).} that we always have \( l_{t_n} < q < l_{t_{n}+1} \). 

    \step\label{proc:odd:restore-state-for-sub}  \fbox{If \( \exists(x < h(i_p)- 1 )\left( x \in \REset[t_n](A_{t_n}){i_p} \setminus \REset[s_{-1}](A_{s_{-1}}){i_p} \right) \)}  place  \( \bY[t_n]{i_p}  \) into \( \setcol{Y_{i_p}}{2} \).  

    This has the effect of canceling (as if we were the active stage \( s_n \)) the elements in \( \setcol{Y_{i_p}}{3} \) which code the membership of those \( x  < h(i_p)- 1 \).  Note that this enumeration may be canceled by a sub-strategy at this very stage.

    \step\label{proc:odd:infer-b}  \fbox{If \( h(i_p)- 1  \in \REset[t_n](A_{t_n}){i_p} \setminus \REset[s_{-1}](A_{s_{-1}}){i_p}  \)} set \( \Sb<h>{i_p} \) to \( \Sb<h>[t_n]{i_p}  \) as at the next active stage \( s_{m+1} \) we must be prepared to cancel the enumerations into \( \setcol{Y_{i_p}}{2} \) we restored by the enumeration of \( q \).

\end{steps}

\step\label{proc:odd:inductive} \fbox{If \( p \neq \mathfrak{r} \lor z +1 < h(i_p) \)} Execute  \( \mathfrak{S}_{\mathfrak{r}}(z,i_p) \) (updating the internal state) and pass on the set of elements to be enumerated unmodified.  Let \( b \) be the recommendation for \( \bY{i_p} \)  returned by the sub-strategy just executed.

 \step \label{proc:odd:cleanup-bY} Let \( V = \set{ \laxiom{\sigma_0}{\pair{3}{y_0}}, \laxiom{\sigma_1}{\pair{3}{y_1}}, \ldots  } \) be the set of axioms projected to be active at the end of stage \( s \) such that \( y_k \)  has the form \( \pair{h(i_p) -1}{x} \) but \( y_k \nin \setcol{Y_{i, s_{-1}}}{3} \).  If \( V \) is empty then set \( \Sb<h>{i_p} \) to \( \diverge \).  Otherwise, let \( t \) be the least stage at which one these axioms is enumerated and \( t_p \) the least preactive stage after \( t \).  If there is no such \( t_p \), i.e., \( t_p = t_{m+1} \), then leave \( \Sb<h>{i_p} \) unchanged.  Otherwise, set \( \Sb<h>{i_p} \) to \( \Sb<h>[t_p]{i_p} \).  

\step\label{proc:odd:cleanup} \fbox{If \( p = \mathfrak{r} \land z = h(i_p) -1   \)} Check if, absent this step we are on track to have  \( \Tfunc{Y_{i_p,{s}}}(h(i_p)-1)\conv \neq  \REset[s](A_s){i_p}(h(i_p)-1)\conv \).  If not enumerate  \( \codeY<i_p>[s]{h(i_p)-1} \) into \( Y_{i_p}  \) and if \( \Sb<h>{i_p} \) is undefined set  \( \Sb<h>{i_p} \) to be large.  

 \step Return to caller with recommendation for \( \bY{i_p} \) equal to the minimum of \( b \) and  \( \Sb<h>{i_p} \) (always passing through any requests for enumeration).

\end{steps}

\end{procedure}

We can now give the rule for responding to a claimed enumeration using the following specification

\begin{equation*}\label{eq:h-for-enum-response}
    h^{j,e}(i) \eqdef \begin{cases}
                        s_{-1} & \text{if } i < \pair{j}{e} + 1\\
                        0 & \text{otherwise}\\
                        \end{cases}
\end{equation*}
  
We presume that we initialize a strategy of the form \( \Strat<h^{j,e}>{}{}{} \) at stage \( s_{-1} \) and adopt the following rule.

\begin{crule}\label{rule:enum-response}
If \( z \entersat{s} \REset(A_s){i} \), and \req{R}{j,e} claims \( z \) then execute \( \Strat<h^{j,e}>{s}{z}{i}  \) and follow it's recommendations.
\end{crule}

Note that, whenever we say things like ``execute \( \Strat<h^{j,e}>{S}{z}{i}  \)'' there is an ambiguity between the procedure which might be instantiated multiple times and the particular instance.  However, we'll see in part \ref{lem:descent:unique-h} of \cref{lem:descent} that in any context each strategy will only ever be instantiated once so there is no risk of confusion.

\section{Verification}\label{sec:verification}

We now demonstrate that the construction given above satisfies the requirements.  In what follows we will frequently prove results about the strategy \( \Strat<h>{}{}{} \) by either via induction or descent on strategies so we remind the reader that relation between strategies which holds whenever  \( \Strat<h>{}{}{} \) calls a strategy \(  \Strat<h_r>{}{}{} \) is a well-founded relation.  To see this note that under the counting measure on \( \omega \) , the integral is a functional from \( \omega^\omega \) to \( \omega \) and for each \( r \) we have \( \int h > \int h_r \).

To facilitate these arguments we've defined \( \Strat<h>{}{}{} \) so that when we initially execute \( \Strat<h_r>{}{}{} \) the configuration \( \Strat<h_r>{}{}{} \) sees (i.e., \( Y_i \)) resembles that seen by \( \Strat<h>{}{}{} \) (and indeed present at stage \( s_{-1} \)).  However, to make use of this we need yet more notation.

\NewDocumentCommand{\Y}{t-omD<>{}}{Y_{#3\IfValueTF{#2}{, #2}{}}^{#4\,\IfBooleanTF{#1}{-}{} }}
\begin{definition}\label{def:state-at-initial-enum}
    If the strategy \( \mathfrak{S} \) is executed at stage \( s \) then let \( \Y-[s]{i}<\mathfrak{S}> \) denote the state of \( Y_i \) at the start of execution of \( \mathfrak{S} \) (including any enumerations scheduled by strategies prior to this point) and let \( \Y[s]{i}<\mathfrak{S}> \) denote the state of \( Y_i \) at the end of execution of \( \mathfrak{S} \).  When \( \mathfrak{S}  \)  instantiates \(  \Strat<h>{}{}{} \) we also denote these values by \( \Y-[s]{i}<h> \) and \( \Y[s]{i}<h> \) respectively.
\end{definition}

Note that we'll continue to use notation like \( t_k \) which is defined relative to a requirement \req{R}{j,e} without making the dependence on \( j,e \) explicit when it is obvious from context which requirement \req{R}{j,e} is relevant. 

\subsection{Correct Computations}\label{sec:verification:correct}

Our main goal in this \namecref{sec:verification:correct} is to prove that \cref{cond:Y-approx:compute-right} holds.  However, we first must prove that \( Y_{i, s_k} \supfun Y_{i,s_{-1}\colrestr{s_{-1}}} \).  But we need a number of utility results first, some of which depend on the very fact to be proved.  We therefore present the lemma below but hold off on a proof until we can gather up all the inductive hypotheses needed.

\begin{restatable}{lemma}{activestagescompat}\label{lem:bY-h-cancels}
Suppose that \req{R}{j,e} isn't initialized between \( s_{-1} \) and \( s_k \) and \( \Rstate{s_k}{j,e} < 2 \)   then
\begin{enumerate}
    \item \label{lem:bY-h-cancels:active-stages} If \( i \leq \pair{j}{e} \) and \( x < s_{-1} \)  we have \( \setcol{\setcol{Y_{i,s_{-1}}}{3}}{x} \subfun \setcol{\setcol{Y_{i,s_{k}}}{3}}{x} \). 
    \item \label{lem:bY-h-cancels:prepped} If \( s_k \) is the greatest active stage less than \( s \) and \( Y'_{i,s} \) is the result of enumerating \( \bY[s]{i} \) into \( \Y[s]{i} \), \( i \leq \pair{j}{e}, x < s_{-1} \)  then \( \setcol{\setcol{Y'_{i,s}}{3}}{x} \supfun \setcol{\setcol{Y_{i,s_{-1}}}{3}}{x} \).  
\end{enumerate}
\end{restatable}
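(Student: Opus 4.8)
The plan is to prove \cref{lem:bY-h-cancels} not in isolation but as part of a simultaneous induction on stages, carrying along a subsidiary descent on strategies (via $h \mapsto \int h$ under the counting measure) and the auxiliary claim $(\ast)$: every value ever assigned to a marker $\bY{i}$, or to a persistent variable $\Sb<h>{i}$, for $i \le \pair{j}{e}$ and any strategy descended from $\Strat<h^{j,e}>{}{}{}$, exceeds $l_{s_{-1}}$. The reason for deferral is that the structural facts about $\Strat<h>{}{}{}$ needed to run the argument below are themselves proved using \cref{lem:bY-h-cancels}, so all of it must go into one induction.

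The underlying mechanism is the positive change property. Since $A \Tplus+ Y_i$ is $\REA[3]$, \cref{lem:positive-change-property} tells us that a cell $\pair{3}{y} \in Y_{i,t}$ can leave the approximation by a later stage only when an element enters one of columns $0,1,2$ of $A \Tplus+ Y_i$ — i.e.\ some element enters $\setcol{A}{k}$ or $\setcol{Y_i}{k}$ for some $k \le 2$ — and then only if that element lies below the use of the axiom that put $\pair{3}{y}$ into $Y_i$; by \cref{cond:building-A,cond:Y-approx} that use is at most $l_{s'}$, where $s' \le s_{-1}$ is the last stage at which $\pair{3}{y}$ was enumerated, hence at most $l_{s_{-1}}$. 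So to prove the first claim it suffices to show that, under its hypotheses, \emph{no element below $l_{s_{-1}}$ enters $\setcol{A}{\le 2}$ or $\setcol{Y_i}{\le 2}$ during $(s_{-1},s_k]$}, and for the second claim the analogue on $(s_{-1},s]$ together with the prep step.

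I would then run through the possible sources. Because $\req{R}{j,e}$ is not reinitialized on $(s_{-1},s_k]$, no higher‑priority module acts or claims an enumeration there; because $\Rstate{s_k}{j,e} < 2$ the module $\module{R}{j,e}$ stays in state $\diverge$, $0$ or $1$ throughout, so by \cref{proc:even} it only ever enumerates some $c_m$ into $\setcol{A}{3}$ and never touches $\setcol{A}{\le 2}$; and any lower‑priority module enumerating into $\setcol{A}{\le 2}$, or into some $\setcol{Y_i}{k}$ via its own copies of \cref{proc:expansionary,proc:odd}, does so with a value chosen large at a stage $> s_{-1}$, hence $> l_{s_{-1}}$. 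What remains are the enumerations into the $Y_i$'s made by $\module{R}{j,e}$ itself: the $\bY{i}$'s entering $\setcol{Y_i}{2}$ from \cref{proc:expansionary}, and the $q$'s entering $\setcol{Y_i}{1}$, the $\bY[t_n]{i_p}$'s entering $\setcol{Y_i}{2}$, and the recursive enumerations of the substrategies $\mathfrak{S}_r = \Strat<h_r>{}{}{}$, all from \cref{proc:odd}. Each such $q$ satisfies $q > l_{t_n}$ with $t_n = \mathfrak{m}^{h}_p$ a preactive stage of the current incarnation, hence $t_n \ge s_{-1}$ and $q > l_{s_{-1}}$; and all the $\bY{}$‑ and $\Sb{}$‑values involved exceed $l_{s_{-1}}$ by $(\ast)$. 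Finally $(\ast)$ is proved by the same induction: reading off \cref{proc:odd}, every assignment to a $\Sb<h>{i}$ either installs a freshly‑chosen large value (at a stage $> s_{-1}$, since $\Strat<h^{j,e}>{}{}{}$ is first run after $s_{-1}$) or copies an earlier $\Sb<h>[t]{i}$, and $\bY{i}$ is set to the minimum of such a $\Sb{}$‑value and a recommendation handed up by a substrategy, which exceeds $l_{s_{-1}}$ by the descent on $\int h$. Thus nothing cancels a cell $\pair{3}{\pair{x}{k}}$ with $x < s_{-1}$ present at $s_{-1}$, giving the first claim; the second follows by the same count on $(s_k,s]$ plus the observation that $\bY[s]{i}$, being a marker value, is again $> l_{s_{-1}}$, so enumerating it into $\Y[s]{i}$ cannot cancel such a cell either.

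The main obstacle — and the reason for postponement — is threading this through the recursion of \cref{proc:odd} and managing the circular dependence just flagged: the facts needed to know that each $\mathfrak{m}^{h}_r$ really does record an accessible preactive stage $\ge s_{-1}$ of the current incarnation, so that the "large use" comparisons are legitimate, are among the utility lemmas whose proofs in turn invoke \cref{lem:bY-h-cancels}. Keeping the induction well founded therefore requires care that, when verifying the first claim at $s_k$, one appeals only to instances of \cref{lem:bY-h-cancels} and of $(\ast)$ at earlier stages and at strategies strictly below the current one in the $\int h$ order.
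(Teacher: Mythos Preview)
Your reduction is incorrect: you claim that for part~\ref{lem:bY-h-cancels:active-stages} it suffices to show that no element below \( l_{s_{-1}} \) enters \( \setcol{A}{\leq 2} \) or \( \setcol{Y_i}{\leq 2} \) during \( (s_{-1}, s_k] \). But \( \subfun \) is extension of partial \emph{characteristic} functions, so the claim \( \setcol{\setcol{Y_{i,s_{-1}}}{3}}{x} \subfun \setcol{\setcol{Y_{i,s_k}}{3}}{x} \) requires agreement on both the \(1\)-values \emph{and} the \(0\)-values in the domain of the left-hand side. Your argument (via the positive change property) establishes only the former: a cell that was \(1\) at \( s_{-1} \) stays \(1\). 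It does nothing for the latter. Concretely, take \( x < s_{-1} \) with \( x \notin \REset[s_{-1}](A_{s_{-1}}){i} \), so that \( \codeY<i>[s_{-1}]{x} = \pair{x}{k} \) satisfies \( Y_{i,s_{-1}}(\pair{3}{\pair{x}{k}}) = 0 \) and typically \( \pair{x}{k} < l_{s_{-1}} \). If \( x \) enters \( \REset(A){i} \) at some \( s \in (s_{-1}, s_k) \), the strategy (e.g.\ via \cref{proc:odd:initial:restart} or \cref{proc:odd:cleanup}) enumerates exactly this \( \pair{x}{k} \) into \( \setcol{Y_i}{3} \), flipping that cell from \(0\) to \(1\). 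Nothing in your account rules this out, nor explains why it is undone by stage \( s_k \).

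This missing half is in fact the whole point of the lemma. The mechanism that restores the \(0\)-values is \cref{proc:expansionary:enum-bY}: at the active stage \( s_k \) we enumerate \( \bY{i} \) into \( \setcol{Y_i}{2} \), and the substance of the argument is that \( \bY{i} \) (bounded above by some \( \Sb<h>{i} \)) is \emph{small enough} to cancel every post-\( s_{-1} \) axiom placing an element of the form \( \pair{x}{k} < l_{s_{-1}} \) into \( \setcol{Y_i}{3} \), while still being \( > l_{s_{-1}} \) so as not to cancel the pre-\( s_{-1} \) ones. Your auxiliary claim \( (\ast) \) captures only the lower bound \( \bY{i} > l_{s_{-1}} \); the paper's proof runs in the opposite direction, proving part~\ref{lem:bY-h-cancels:prepped} first (assuming part~\ref{lem:bY-h-cancels:active-stages} at earlier stages) by a descent on strategies showing that \( \Sb<h>[s]{i} \) is small enough to cancel the offending axioms --- tracked by the bookkeeping in \cref{proc:odd:cleanup-bY,proc:odd:cleanup,proc:odd:initial:restart} --- and then deducing part~\ref{lem:bY-h-cancels:active-stages} at \( s_k \) from part~\ref{lem:bY-h-cancels:prepped} at \( s_k - 1 \) plus the execution of \cref{proc:expansionary}. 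Your plan needs to incorporate this upper-bound analysis of \( \bY{i} \) to go through.
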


We now prove our utility results, some of which assume that \cref{lem:bY-h-cancels} holds at earlier stages.

\begin{lemma}\label{lem:descent}
    Suppose that \( \mathfrak{S}  \) instantiates \(  \Strat<h>{}{}{} \) and receives an initial enumeration of \( z \) into \( \REset(A){i} \)  at stage \( t \in (s_k, s_{k+1}) \) (note that \( s_{k+1} \) is possibility infinite) then
    \begin{enumerate}
        \item\label{lem:descent:handle-comp} If \( x \) is enumerated into \( \REset(A){i'} \) at stage \( s \in (s_k, s_{k+1}) \) then \( \mathfrak{S} \) is executed to handle this enumeration iff \( x < h(i')  \).  
        \item \label{lem:descent:straat-only-resp-for-one} At no stage in \( (s_k, s_{k+1}) \) does any strategy other than \( \mathfrak{S} \) enumerate elements into \( \setcol{\setcol{Y_{i}}{3}}{h(i) - 1}  \).
        \item \label{lem:descent:straat-controls-eligible-coders} At no stage in \( (s_k, s_{k+1}) \) does any strategy not called from \( \mathfrak{S} \) enumerate elements into \( \setcol{\setcol{Y_{i}}{3}}{z}  \).
        \item\label{lem:descent:unique-h} No other strategy implementing \(  \Strat<h>{}{}{} \) executes during  \( (s_k, s_{k+1}) \).
        \item\label{lem:descent:q-small}  If \( \mathfrak{S} \) executes stage \cref{proc:odd:jump-back} enumerating \( q \) with \( l_{t_n} < q < l_{t_n + 1} \) into \( \setcol{Y_i}{1} \)  then no \( q' < l_{t_n} \) has been enumerated since stage \( t_n \).    
    \end{enumerate}
    If we further assume that \cref{lem:bY-h-cancels} holds for \( s_{k'}, k' \leq k  \)  and \( x < h(i') \) then we may also conclude.
    \begin{enumerate}[resume*]
        \item\label{lem:descent:out-until-seen}  For all \( k' \leq k, x < s_{-1} \) and \( s \in (s_{k'}, s_{k'+1}) \) if   \( \forall(x' \leq x)\left(x' \nin \REset[s](A){i'} \setminus \REset[s_{-1}](A){i'}\right) \) then \( \setcol{\setcol{Y_{i',s_{-1}}}{3}}{x} \subfun \setcol{\setcol{Y_{i', s}}{3}}{x}  \). 
         \item\label{lem:descent:initial-enum-for-h} For all \(i', x \) if \( x < h(i') \) then  \( \setcol{\setcol{Y_{i',s_{-1}}}{3}}{x} \subfun \setcol{\setcol{\Y-[t]{i'}<h>}{3}}{x}  \).  In other words, initial enumerations to strategies start with a blank slate.  
     \end{enumerate}
\end{lemma}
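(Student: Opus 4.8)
The plan is to prove items~\ref{lem:descent:handle-comp}--\ref{lem:descent:initial-enum-for-h} together by induction on the rank of $\mathfrak{S}$ in the well‑founded ``caller'' ordering on strategies ($\int h_r<\int h$ whenever $\Strat<h>{}{}{}$ calls $\Strat<h_r>{}{}{}$), fixing $\mathfrak{S}$ instantiating $\Strat<h>{}{}{}$ with its initial enumeration of $z$ into $\REset(A){i}$ at $t\in(s_k,s_{k+1})$, assuming all seven items for every strategy $\mathfrak{S}$ calls, and --- for items~\ref{lem:descent:out-until-seen} and~\ref{lem:descent:initial-enum-for-h} only --- also assuming \cref{lem:bY-h-cancels} at each $s_{k'}$ with $k'\le k$. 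The one item I would peel off into a separate, \emph{top-down} induction on depth in the call tree of \module{R}{j,e} is~\ref{lem:descent:handle-comp}, since it concerns $\mathfrak{S}$'s caller rather than its callees; the other items then invoke it freely.

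Everything hinges on one structural observation: \cref{proc:odd} alters the persistent variables $\mathfrak{r},\mathfrak{m}_r,\mathfrak{S}_r$ only inside \cref{proc:odd:initial}, which runs exactly once on $(s_k,s_{k+1})$ --- at $t$. Hence throughout the interval $\mathfrak{S}$'s active index is pinned at the $p$ with $i_p=i$, so $\mathfrak{S}$ delegates, if at all, only to $\mathfrak{S}_p$, which by \eqref{eq:hr} instantiates $\Strat<h_p>{}{}{}$ with $h_p(i)=h(i)-1$ and $h_p$ agreeing with $h$ at every other index. Iterating up and down the tree, \emph{the strategies of \module{R}{j,e} that run during $(s_k,s_{k+1})$ form a single rigid chain}; item~\ref{lem:descent:unique-h} is then immediate, since distinct chain nodes have strictly decreasing $\int h$. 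For item~\ref{lem:descent:handle-comp} I would match the branch test ``$p\neq\mathfrak{r}\lor z+1<h(i_p)$'' of \cref{proc:odd:inductive} and the restart clause of \cref{proc:odd:initial:restart} against \eqref{eq:hr}: granting~\ref{lem:descent:handle-comp} for the caller, one checks that the caller passes an enumeration of $x$ into $\REset(A){i'}$ down to $\mathfrak{S}$ exactly when $x<h(i')$ (the borderline value $x=h(i')$ being retained by the caller itself); at the root the caller is \cref{rule:enum-response}, and ``$x<h^{j,e}(i')$'' is precisely the claiming test of \cref{rule:claim}, since the state of \module{R}{j,e} stays in $\set{0,1}{}$ on the open interval $(s_k,s_{k+1})$.

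Items~\ref{lem:descent:straat-only-resp-for-one}, \ref{lem:descent:straat-controls-eligible-coders} and~\ref{lem:descent:q-small} carry the bulk of the bookkeeping, and items~\ref{lem:descent:straat-only-resp-for-one}--\ref{lem:descent:straat-controls-eligible-coders} are where I expect the real work. The method is to catalogue every way an element is put \emph{into} a sub‑column of column~$3$ during $(s_k,s_{k+1})$: the coding moves $\codeY<i'>{x'}$ of \cref{proc:odd:if-agree}, \cref{proc:odd:initial:restart} and \cref{proc:odd:cleanup}; the closing clause \cref{proc:expansionary:fix-misc} of \cref{proc:expansionary}; and \cref{rule:unclaimed}. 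Combining item~\ref{lem:descent:handle-comp} with the rigid‑chain fact: the only chain nodes at $\mathfrak{S}$ or below are $\mathfrak{S}$, $\mathfrak{S}_p$ and $\mathfrak{S}_p$'s chain‑descendants, each of which has $h$‑value $\le h(i)-1$ at $i$ and thus (by~\ref{lem:descent:handle-comp} for them) never runs a coding move aimed at sub‑column $h(i)-1$; every node strictly above $\mathfrak{S}$ has $h$‑value $\ge h(i)+1$ at $i$ and, being on the chain, delegates past sub‑column $h(i)-1$ instead of touching it; \cref{proc:expansionary} runs only at active stages and its clause \cref{proc:expansionary:fix-misc} affects $Y_i$ only at $x\ge s_{-1}$; and \cref{rule:unclaimed} cannot fire for an enumeration of $h(i)-1<s_{-1}$ into $\REset(A){i}$, as \module{R}{j,e}, being in a state in $\set{0,1}{}$, would claim it. Finally every competing strategy tree active on the interval belongs to a \emph{lower}‑priority module (\module{R}{j,e} is not reinitialized on $(s_k,s_{k+1})$), so by \cref{rule:claim} it never claims an enumeration of a value below $s_{-1}$ and hence never codes into a sub‑column below $s_{-1}$. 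This leaves $\mathfrak{S}$ as the sole writer of $\setcol{\setcol{Y_i}{3}}{h(i)-1}$, which is item~\ref{lem:descent:straat-only-resp-for-one}; item~\ref{lem:descent:straat-controls-eligible-coders} is the same count restricted to sub‑column $z$, whose writers on the interval are $\mathfrak{S}$ together with the strategies it calls (the chain‑descendants of $\mathfrak{S}$ with $h$‑value $>z$ at $i$). Item~\ref{lem:descent:q-small} follows from the accessibility discipline built into \cref{def:strat-h}: when \cref{proc:odd:jump-back} fires, $t_n=\mathfrak{m}_p$ names an $i_p$‑accessible stage, so \cref{def:accessible-stage} gives that $\setcol{Y_{i_p}}{1}$ has acquired no element below $l_{t_n}$ since $t_n$; this also supplies the needed fresh values, together with \cref{cond:building-A:large} and the execution bound promised in the footnote to \cref{proc:odd:jump-back}.

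Items~\ref{lem:descent:out-until-seen} and~\ref{lem:descent:initial-enum-for-h} are deductions from items~\ref{lem:descent:handle-comp}--\ref{lem:descent:q-small} and \cref{lem:bY-h-cancels} at the earlier active stages. For~\ref{lem:descent:out-until-seen} I would induct on $k'\le k$ and on stages in $(s_{k'},s_{k'+1})$: by the write‑catalogue and item~\ref{lem:descent:straat-controls-eligible-coders} a sub‑column $\setcol{\setcol{Y_{i'}}{3}}{x}$ with $x<s_{-1}$ can move away from its $s_{k'}$‑value only via a $\codeY<i'>{x'}$‑move triggered by some $x'\le x$ entering $\REset(A){i'}$ after $s_{-1}$, or via a column‑$2$ enumeration, which --- at an active stage by part~\ref{lem:bY-h-cancels:active-stages} of \cref{lem:bY-h-cancels}, and at an odd stage via \cref{proc:odd:restore-state-for-sub} by the analogous reasoning --- can erase only codings introduced after $s_{-1}$; under the stated hypothesis no such trigger occurs, so the sub‑column only grows. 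Item~\ref{lem:descent:initial-enum-for-h} is then the case $s=t$: for $x<h(i')\le s_{-1}$ item~\ref{lem:descent:handle-comp} shows no writer of $\setcol{\setcol{Y_{i'}}{3}}{x}$ has been executed between $s_{-1}$ and $t$ unless some $x'\le x$ entered $\REset(A){i'}$ meanwhile, so the hypothesis of~\ref{lem:descent:out-until-seen} applies and $\setcol{\setcol{\Y-[t]{i'}<h>}{3}}{x}$ still equals its $s_{-1}$‑value. The real difficulty throughout, as noted, is keeping this column‑$3$ write‑catalogue honest across the whole finite call tree and all competing modules; what makes it tractable is the rigid‑chain phenomenon and the fact that values below $s_{-1}$ can only be claimed by \module{R}{j,e} itself.
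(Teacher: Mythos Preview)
Your overall architecture---descent on $\int h$, the ``rigid chain'' observation, and the explicit write-catalogue for column~3---is sound and matches the paper's approach; the rigid-chain framing is in fact cleaner than the paper's presentation of the same inductive structure, and your top-down treatment of item~\ref{lem:descent:handle-comp} is exactly right.

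There is, however, a genuine gap in your argument for item~\ref{lem:descent:q-small}. You write that ``$t_n=\mathfrak{m}_p$ names an $i_p$-accessible stage'' and cite \cref{def:strat-h} for this. But the parenthetical ``(i.e.\ accessible)'' in \cref{def:strat-h} is a comment about what $\mathfrak{m}_p$ is \emph{intended} to track, not a proved invariant: the procedure merely assigns $\mathfrak{m}_p:=t_{m+1}$ at an initial enumeration and never checks accessibility. Whether $t_n$ is still accessible at the moment \cref{proc:odd:jump-back} fires is precisely what item~\ref{lem:descent:q-small} is asserting, so your argument is circular. Moreover, accessibility (in the sense of \cref{def:accessible-stage}) constrains only column~$\leq 1$, while the claim is that \emph{no} $q'<l_{t_n}$ has been enumerated since $t_n$---you need the stronger statement to make the ``restores the state at $t_n$'' reasoning in items~\ref{lem:descent:out-until-seen}--\ref{lem:descent:initial-enum-for-h} go through.

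The paper supplies the missing idea: track reinitialization. First, if the first execution of $\mathfrak{S}$ after (re)initialization falls in $(s_{k'},s_{k'+1})$, then $\mathfrak{S}$ and everything it calls enumerate only values $\geq l_{s_{k'}}$ (since $\mathfrak{m}_p$ is undefined on that first run, \cref{proc:odd:initial:restart} fires and \cref{proc:odd:jump-back} does not). Second, if between $t_n$ and the current stage any strategy \emph{not} in $\mathfrak{S}$'s subtree had run, then some ancestor of $\mathfrak{S}$ received an initial enumeration with a smaller index and therefore reset $\mathfrak{S}$---but then $\mathfrak{m}_p$ would have been cleared, contradicting $\mathfrak{m}_p=t_n$. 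Hence only $\mathfrak{S}$'s subtree has run since $t_n$, and by the first point everything they enumerated is $\geq l_{t_n}$; finally, at the current initial enumeration $\mathfrak{S}$ executes \cref{proc:odd:jump-back} before any sub-strategy does anything. This reinitialization-tracking is what your sketch is missing; once you add it, the rest of your plan for items~\ref{lem:descent:out-until-seen}--\ref{lem:descent:initial-enum-for-h} works as written.
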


\begin{proof}
We prove these claims by descent.   For claims \ref{lem:descent:handle-comp}, \ref{lem:descent:straat-only-resp-for-one}, \ref{lem:descent:straat-controls-eligible-coders} and \ref{lem:descent:unique-h} we observe they clearly hold for \( h = h^{j,e} \) and we now prove that these claims also hold for the strategies \( \mathfrak{S}_{\mathfrak{r}} = \Strat<h_{\mathfrak{r}}>{}{}{}  \) called from \( \mathfrak{S}  \).  
 

To see that \ref{lem:descent:handle-comp} holds note that the only time \( \mathfrak{S}  \) doesn't pass on \( z \) is when \( z = h(i) - 1 \) and that by \cref{eq:hr} this is the only element which isn't below \( h_{\mathfrak{r}} \).     To see that \ref{lem:descent:straat-only-resp-for-one} (and, thus, \ref{lem:descent:straat-controls-eligible-coders}) holds note that only \cref{proc:odd:cleanup} ever enumerates elements and then only if \( z = h(i_p) - 1 \).      \( \mathfrak{S}_{\mathfrak{r}} \) is passed any element \( x < h_r(i') \) as the only time \( x \)   received by \( \mathfrak{S} \) and \( \mathfrak{S} \) only enumerates into \( \setcol{Y_i}{\leq 2} \) before executing \( \mathfrak{S}_{\mathfrak{r}} \) for the first time.

To see that \ref{lem:descent:q-small} holds it is enough to observe that if the first time after reinitialization that \( \mathfrak{S} \) runs is \( s  \in (s_{k'}, s_{k'+1}) \) (with \( s_{k' + 1} \) possibly infinite) then neither \( \mathfrak{S} \) nor any strategy called from it will enumerate elements below \( l_{s_{k'}} \).  Then observe that if we ever run \( \mathfrak{S}_r  \) with \( r < p \) we reinitialize \( \mathfrak{S}_r \) so that if we had run some strategy that wasn't called by \( \mathfrak{S} \) but enumerated a value below \( l_{t_n} \) we would have reinitialized \( \mathfrak{S} \) after \( t_n \) contradicting the choice of \( q \).   The conclusion follows by observing that, after stage \( t_n \), the first time \( \mathfrak{S} \) is run it enumerates \( q  \) before any sub-strategy it calls enumerates anything.


To prove \ref{lem:descent:out-until-seen} note that the base case holds by the assumption of \cref{lem:bY-h-cancels} and we can ignore any stages at which no enumeration into \( \REset(A){i'} \) is claimed by \module{R}{j,e}.  If \(  \Strat<h'>{s}{y}{i'} \) is executed at \( s \)  then \( h'(i') - 1 \geq y > x  \) so direct enumeration can't cause disagreement.  This leaves only enumeration into \( \setcol{Y_{i'}}{\leq 2} \) as a threat.  However, when this happens \cref{proc:odd:jump-back} restores the state as of \( t_n \) and by the inductive assumption (with \( t_{n} \) replacing \( s \) and \(   s_{n - 1} \) replacing \( s_{k'} \))  we can conclude that if no \( x' \leq x  \) are in \(  \REset[s](A){i'} \setminus \REset[s_{-1}](A){i'} \) then \( \setcol{\setcol{Y_{i',s_{-1}}}{3}}{x} \subfun \setcol{\setcol{Y_{i', s}}{3}}{x}  \).  Conversely, if there is such an \( x' \) then \cref{proc:odd:restore-state-for-sub} enumerates \( \bY[t_n]{i'}  \) into \( \setcol{Y_{i'}}{2} \) which, by \ref{lem:descent:q-small} ensures that \( \setcol{\setcol{Y_{i',s_{-1}}}{3}}{x} \subfun \setcol{\setcol{Y_{i', s}}{3}}{x}  \) by the assumption of  \cref{lem:bY-h-cancels}.  

With \cref{lem:descent:out-until-seen} established we can now proceed to prove \ref{lem:descent:initial-enum-for-h} in exactly the same manner.  Note that  \ref{lem:descent:initial-enum-for-h}  clearly holds if \( h = h^{j,e} \) by our assumption that \cref{lem:bY-h-cancels}  holds at \( s_k \).  For the inductive case we need merely replace the assumption that no \( x' \leq x \) are in \(  \REset[s](A){i'} \setminus \REset[s_{-1}](A){i'} \) with the assumption that \(  \Strat<h'>{}{}{} \) is executing before the initial enumeration for \(  \Strat<h>{}{}{} \) noting that by \ref{lem:descent:handle-comp} if \( \Strat<h'>{s}{y}{i'} \) executes before the initial enumeration for \(  \Strat<h>{}{}{} \) in \( (s_k, s_{k+1}) \)  then  \( h'(i') - 1 > x \).   
\end{proof}

We now return and provide the promised proof.

\activestagescompat*
\begin{proof}
We first prove \ref{lem:bY-h-cancels:prepped} on the assumption that \ref{lem:bY-h-cancels:active-stages} holds at each \( s_{k'}, k' \leq k \).

By \cref{lem:descent}  it is enough to argue that if \( z = h(i) - 1 \) then \(  \Sb<h>[s]{i}   \) (and thus \( \bY[s]{i} \leq \Sb<h>[s]{i} \)  ) cancels any changes made by \( \Strat<h>{s}{z'}{i}, z' \leq z \) to \( \setcol{\setcol{Y'_{i,s}}{3}}{z} \).  After all, \cref{lem:descent:initial-enum-for-h} ensures that the claim holds true at all stages prior to such an enumeration, \cref{lem:descent:initial-enum-for-h} ensures that it is true when \( \Strat<h>{}{}{} \) receives it's initial enumeration, stages at which enumeration occurs into some \( \REset(A){i'}, i' \neq i\) don't alter  \( \setcol{Y'_{i,s}}{3}, \Sb<h>{i} \) or \(  \bY{i}  \).  But the combination of \cref{proc:odd:cleanup-bY,proc:odd:cleanup} and out assumption that \ref{lem:bY-h-cancels:active-stages} holds at all \( s_{k'}, k' \leq k \)   ensure that \(  \Sb<h>[s]{i} \) is small enough to cancel any active axioms causing \( \setcol{\setcol{Y'_{i,s}}{3}}{z} \) to disagree with \( \setcol{\setcol{Y_{i,s_{-1}}}{3}}{z} \) or \cref{proc:odd:initial:restart} when it is an initial enumeration and \( \mathfrak{m}_i \)  is undefined.

We now note that \ref{lem:bY-h-cancels:prepped} is trivial if \( k = -1 \) and otherwise follows by induction on \( k \) by application of \ref{lem:bY-h-cancels:prepped} and the fact that \cref{proc:even} executes \cref{proc:expansionary} when it enumerates \( c_k \).  
\end{proof}

We are now in a position to prove \cref{cond:Y-approx:compute-right} holds.  We break the proof up into two lemmas.

\begin{lemma}\label{lem:maintain-computations-T}
    For all \( s,i \), \( \Tfunc{Y_{i,s}}  \) is compatible with \( \REset[s](A_s){i} \).  Moreover, if \( \REset[s](A_s){i}(x) \diverge \) then \( \Tfunc{Y_{i,s}}(x) = 0  \).  
\end{lemma}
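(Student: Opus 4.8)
The plan is to settle the ``moreover'' clause directly and to prove the compatibility assertion by induction on $s$. The moreover clause needs no induction: by \cref{cond:building-re:approx} the partial function $\REset[s](A_s){i}$ has domain exactly $\set{x}{x<l_s}$, so $\REset[s](A_s){i}(x)\diverge$ forces $x\geq l_s$; then $\pair{x}{k}\geq x\geq l_s$ for every $k\geq 0$, and since $\dom Y_{i,s}=\set{\pair{k'}{x'}}{k'<l_s+4\land x'<l_s}$ by \cref{cond:Y-approx:s-supported} we get $Y_{i,s}(\pair{3}{\pair{x}{k}})\diverge\neq 1$ for every $k$, so the maximum in the definition of $\Tfunc{Y_{i,s}}(x)$ in \cref{def:funcs} is $-1$, which is odd, giving $\Tfunc{Y_{i,s}}(x)=0$. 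As $\Tfunc{Y_{i,s}}$ is total, the rest reduces to showing $\Tfunc{Y_{i,s}}(x)=\REset[s](A_s){i}(x)$ for all $i$ and all $x<l_s$, which I would prove by induction on $s$ (base case trivial). In the step, fixing $i$, the inductive hypothesis disposes of every $x<l_{s-1}$ at which neither side moves between $s-1$ and $s$, and a short, routine check --- using that the moduli $l_t$ are chosen large enough that every axiom of $\axset<i>$ or $\axset(Y_i)$ in play has a small subject --- gives $\REset[s](A_s){i}(x)=0=\Tfunc{Y_{i,s}}(x)$ for any $x$ becoming visible at stage $s$; so it remains only to trace the construction's actions.

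At an odd stage $s$ no axiom enters $\axset$, so $A_s\supfun A_{s-1}$ (the new heights filled with $0$'s), and by \cref{cond:building-re:one-small} and the enumeration built in \cref{lem:re-enum-exists} at most one element $z$ enters at most one set $\REset(A){i_0}$. For $i\neq i_0$ nothing relevant moves, so the inductive hypothesis suffices. For $i_0$, $\REset[s](A_s){i_0}(z)$ flips to $1$ with every other value fixed, so I must verify that the response leaves $\setcol{\setcol{Y_{i_0}}{3}}{x}$ untouched for $x\neq z$ and changes the parity of the largest code for $z$. If $z$ goes unclaimed this is immediate from \cref{rule:unclaimed}, which appends the least missing code $\codeY<i_0>[s]{z}$ and touches no other column-$3$ element. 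If $z$ is claimed, \cref{rule:enum-response} runs $\Strat<h^{j,e}>{s}{z}{i_0}$, and here I would invoke \cref{lem:descent} --- which pins down exactly which (sub)strategy may alter each column of $Y_{i_0}$ and each coder $\setcol{\setcol{Y_{i_0}}{3}}{x}$ --- together with \cref{proc:odd:cleanup} applied at each level of the strategy recursion (for that level's own value), and with the fact that the ``jump back'' of \cref{proc:odd:jump-back} followed by the restoration of \cref{proc:odd:restore-state-for-sub} leaves the codes of every value currently out of $\REset[t_n](A_{t_n}){i_0}\setminus\REset[s_{-1}](A_{s_{-1}}){i_0}$ undisturbed; these facts are exactly what the later clauses of \cref{lem:descent} and \cref{lem:bY-h-cancels} provide.

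At an even stage $s$ no axiom enters any $\axset<i>$ by \cref{cond:building-re:odd-stages}, so $\REset(A){i}$ can change only through a change in $A$, and at most one module $\module{R}{j,e}$ acts via \cref{proc:even}. If it enumerates some $c_k$ into $\setcol{A}{3}$ and runs \cref{proc:expansionary}, then by \cref{lem:positive-change-property} applied to the $\REA[3]$ set $A$ nothing is cancelled within $A$ (there is no column above $3$), so the elements leaving a set $\REset(A){i}$ are precisely those whose defining axiom --- which, by \cref{cond:building-re:use}, records the value of $\pair{3}{c_k}$ --- committed to $c_k$ staying out; \cref{proc:expansionary} then re-enumerates $\bY{i}$ into $\setcol{Y_i}{2}$ and, via \cref{proc:expansionary:fix-misc}, repairs the codes of the $x\geq s_{-1}$, so that combining \cref{lem:bY-h-cancels} for the $x<s_{-1}$ with the inductive hypothesis re-establishes $\Tfunc{Y_{i,s}}=\REset[s](A_s){i}$. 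The enumerations of $b$ (state $1\to 2$) and of $a$ (state $2\to 3$) occur only at stages satisfying, respectively, the agreement relation $\hat{s}_0\Yagree s-1$ of \cref{proc:even:state-one:change-to-two} and $\SelfComp{c}{s-1}$; since $A\Tplus+ Y_i$ is $\REA[3]$, enumerating $b$ (resp.\ $a$) cancels $c$ (resp.\ $b$) together with exactly the column-$3$ elements of $Y_i$ enumerated since the matching earlier stage $\hat{s}_0$ (resp.\ $\hat{s}_1$), returning $\REset(A){i}$ and $Y_i$ to states compatible with that earlier stage, where the invariant holds by induction and by agreement.

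I expect the main obstacle to be the claimed odd-stage case: showing that the nested jump-back/restore mechanism of \cref{proc:odd}, run recursively down through the descending strategies $\Strat<h>{}{}{}$, yields the correct column-$3$ configuration of $Y_{i_0}$ for every $x$ simultaneously. This is precisely what the structural facts gathered in \cref{lem:descent} --- which (sub)strategy owns which coders, and that jump-back values stay small --- and \cref{lem:bY-h-cancels} --- enumerating $\bY{i}$ at an active stage restores the relevant codes to their values at $s_{-1}$ --- are engineered to deliver; granting them, the bookkeeping in the odd and even cases closes the induction.
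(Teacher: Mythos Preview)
Your proposal is correct and follows essentially the same approach as the paper's proof. You organise the inductive step by the parity of the stage while the paper organises it by what element is enumerated, but these decompositions coincide (odd stages handle $z\entersat{s}\REset(A){i}$, even stages handle enumerations into $A$), and at each branch you invoke exactly the same ingredients the paper does: \cref{rule:unclaimed} for unclaimed enumerations, \cref{proc:odd:cleanup} together with the structural facts of \cref{lem:descent} for claimed ones, \cref{lem:bY-h-cancels} plus \cref{proc:expansionary} for the $c_k$ case, and the agreement relation with the inductive hypothesis at the earlier stage for the $b$ and $a$ cases.
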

\begin{proof}
The moreover claim is trivial since if \( x > l_s \) then we've taken no action regarding \( x \)  at or before stage \( s \).   We let \( i \) be arbitrary and proceed by induction to prove the main claim.  Trivially, the claim holds if \( s = 0 \) so we assume that the claim holds for all stages prior to \(  s \) and argue the \namecref{lem:maintain-computations-T} holds at \( s \).  If no enumeration happens at \( s \) into either \( A \) or  \( \REset(A){i} \)   the result is immediate. So we consider the following (disjoint) cases.

\begin{pfcases*}
\case[\( x \entersat{s} \REset(A){i} \) ]  This case can be broken into two subcases.
\begin{pfcases*}
\case[\( x  \) is claimed by some \module{R}{j,e}]  In this case, there is some \( \Strat<h>{}{}{} \) executing at \( s \) such that \( h(i) - 1 = x \).  \Cref{proc:odd:cleanup} plus the fact that no elements are enumerated into either \( \setcol{Y_i}{\leq 2} \) or \( \setcol{\setcol{Y_i}{3}}{x}  \) after \cref{proc:odd:cleanup} during stage \( s \) establish the claim.
\case[\( x  \) is unclaimed]  In this case, the result follows immediately by \cref{rule:unclaimed}.
\end{pfcases*}
 \case[\( c_k \entersat{s} \setcol{A}{3} \)] Suppose \module{R}{j,e} is responsible for the numeration of \( c_k \) and consider the following subcases.

 \begin{pfcases*}
    \case[\( x \in \REset[{s_{-1}}]({A_{s_{-1}}}){i} \) ] In this case, \( x \in \REset[s](A_{s}){i} \) as any enumeration below \( l_{s_{-1}} \) would have injured \module{R}{j,e}.  Similarly, \( \setcol{\setcol{Y_{i,s_{-1}}}{3}}{x} \subset  \setcol{\setcol{Y_{i,s}}{3}}{x} \).  But as we only enumerate elements into   \( \setcol{\setcol{Y_{i}}{3}}{x}  \) when it appears that the lemma would fail no such elements are ever enumerated at or after \( s_{-1} \) so the claim holds by virtue of the inductive hypothesis applied at stage \( s_{-1} \). 

    \case[\( x \nin \REset[s_{-1}](A_{s_{-1}}){i} \)] Note that if \( x \geq s_{-1} \lor i> \pair{j}{e} \) then the claim follows immediately via the action of \cref{proc:expansionary:fix-misc}.  So assume \( x < s_{-1} \land i \leq \pair{j}{e} \).   Since \( x \) is enumerated into \( \REset(A){i} \) after stage \( s_{-1} \) it is (by induction) enumerated dependent on \( \setcol{A}{3}(c_k) = 0 \) .  Thus, \( x \nin \REset[s](A_{s}){i} \) and the result follows by  \cref{lem:bY-h-cancels}. 
\end{pfcases*}

        \case[\( b=b_k \entersat{s} \setcol{A}{2} \)]  Suppose \module{R}{j,e} is responsible for the numeration of \( b \).  \Cref{proc:even:state-one:change-to-two} ensures that there is some \( k \) with \( s_k < s \)  such that \( s_{k} - 1 \Yagree s - 1 \).  But the enumeration of \( b \) (chosen large at stage \( s_k \) ) cancels all enumerations into \( \setcol{Y_i}{3} \) after stage \( s_{k} - 1 \) and as \( s_{k} - 1 \Yagree s - 1 \) this ensures that \( Y_{i,s} \supfun Y_{i,s_k} \).  As the enumeration of \( b \) also cancels any enumeration of \( x \) into \( \REset(A){i} \) after stage \( s_k -1 \) the result follows from the application of the inductive hypothesis to stage \( s_k \). 

        \case[\( a \entersat{s} \setcol{A}{1} \)] Suppose \module{R}{j,e} is responsible for the enumeration of \( a \).  The enumeration of \( a \) cancels all enumerations into \( \setcol{Y_i}{3} \) since the stage \( t \) at which  \( b \) was enumerated as well as any enumeration of \( x \) into \( \REset(A){i} \) after stage \( t -1 \).  As no enumerations were made into \( \setcol{Y_i}{\leq 2}\colrestr{l_t} \) since stage \( t - 1 \) the state at stage \( t - 1 \) is restored and the claim follows from the inductive hypothesis applied at stage \( t - 1 \).     
    \end{pfcases*}
\end{proof}

Now to prove the corresponding claim for \( \Gfunc{i}{} \). 

\begin{lemma}\label{lem:maintain-computations-G}
    For all \( s,i \),   \( \Gfunc{i}{A_s \Tplus \REset[s](A_s){i}} \subfun Y_{i, s} \) and \( \Gfunc{i}{} \) is well-defined. 
\end{lemma}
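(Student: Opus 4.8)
The plan is to mirror the proof of \cref{lem:maintain-computations-T}: I argue by induction on \( s \) that \( \Gfunc[s]{i}{A_s \Tplus \REset[s](A_s){i}} \subfun Y_{i,s} \) for every \( i \), with a case split on what, if anything, is enumerated at stage \( s \).  Fix \( i \) and write \( s^{\ast} \) for the last stage \( \leq s \) at which \module{P}{i} is reinitialized.  Recall from \cref{def:funcs} that if \( \Gfunc[s]{i}{A_s \Tplus \REset[s](A_s){i}}(y)\conv \) then its value is \( Y_{i,t}(y) \) for the \emph{least} \( t \in [s^{\ast}, s] \) with \( t > y \), \( Y_{i,t}(y)\conv \), \( A_s \supfun A_t \) and \( \REset[s](A_s){i}\restr{y+1} \supfun \REset[t](A_t){i}\restr{y+1} \).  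Two observations drive everything: (i) \( A_s \supfun A_t \) forces \( A \) to have the same approximation on every column up to height \( l_t \) at stages \( t \) and \( s \), and likewise \( \REset(A){i} \) has the same approximation on \( \{0,\dots,y\} \) at stages \( t \) and \( s \); (ii) because we always take the least valid \( t \), so long as the oracle and the window \( [s^{\ast},s] \) only grow with \( s \) the witness \( t \), and hence the computed value, is unchanged.  This reduces the lemma to the claim that \( Y_{i,t}(y) = Y_{i,s}(y) \) whenever the stage-\( t \) commitment fires at a later stage \( s \); by \cref{lem:positive-change-property} applied to the \REA[3] set \( Y_i \Tplus+ A \), \( Y_{i,\cdot}(\pair{k}{x}) \) can change between \( t \) and \( s \) only via an enumeration into \( \setcol{A}{k'} \) or \( \setcol{Y_i}{k'} \) with \( k' < k \), and observation (i) kills the \( A \)-side (after a short secondary induction down the columns, using \cref{cond:Y-approx:largeuse} to see that the axiom supporting a cell of \( Y_i \) below \( l_t \) depends only on data below \( l_t \)).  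So the whole task is to control enumerations into \( \setcol{Y_i}{<k} \) during \( (t,s] \).

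I first dispose of the routine cases.  If \( t = s \) — in particular whenever \module{P}{i} is reinitialized at \( s \), since then \( s^{\ast}=s \) — the value is \( Y_{i,s}(y) \) by definition.  If no axiom is enumerated at \( s \) into \( A \), any \( \REset(A){i} \), or any \( Y_i \), then \( A_s \supfun A_{s-1} \), \( \REset[s](A_s){i} \supfun \REset[s-1](A_{s-1}){i} \), \( Y_{i,s} \supfun Y_{i,s-1} \) and \( s^{\ast} \) is unchanged, so observation (ii) reduces the claim to the inductive hypothesis at \( s-1 \).  Well-definedness of \( \Gfunc{i}{} \) follows since the construction is finite injury — each \module{R}{j,e} acts finitely often and reinitializes only lower-priority modules — so \module{P}{i} is reinitialized finitely often, \( s^{\ast} \) stabilizes, and past that stage observation (ii) gives pointwise convergence of \( \Gfunc[s]{i}{} \) while the main claim gives consistency of the limiting axiom set.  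It is also worth recording that for \( i \leq \pair{j}{e} \) no \module{R}{j',e'} with \( \pair{j'}{e'} < i \) can act during \module{R}{j,e}'s current incarnation without first reinitializing \module{R}{j,e}, so throughout that incarnation \( s^{\ast} \) is a fixed stage preceding \( s_{-1} \) and every \( \Gfunc{i}{} \)-commitment made before \( s_{-1} \) stays live.

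For the substantive cases I follow the five-way split used in the proof of \cref{lem:maintain-computations-T}.  If \( x \entersat{s} \REset(A){i'} \) with \( i' \neq i \), nothing on the \( i \)-side moves.  If \( x \entersat{s} \REset(A){i} \), observation (i) gives \( x > y \), and the prescribed response either (unclaimed, \cref{rule:unclaimed}) enumerates only into \( \setcol{\setcol{Y_i}{3}}{x} \), leaving \( Y_{i,\cdot}(y) \) fixed, or (claimed, via some \module{R}{j,e}) runs a strategy \( \Strat<h>{}{}{} \) whose only moves into \( \setcol{Y_i}{<3} \) are steps \cref{proc:odd:jump-back} and \cref{proc:odd:restore-state-for-sub}, which — by \cref{lem:descent} (in particular \cref{lem:descent:q-small,lem:descent:out-until-seen}) and \cref{lem:bY-h-cancels} — only restore \( \setcol{Y_i}{\leq 2}\restr{l_t} \) to a state already agreeing with stage \( t \), so \( Y_{i,\cdot}(y) \) does not change.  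For \( c_k \entersat{s} \setcol{A}{3} \): if \( i > \pair{j}{e} \) then \module{P}{i} is reinitialized and we are back in the routine case; if \( i \leq \pair{j}{e} \) then, since \( c_k \) was reserved at \( s_{-1} \) and \( l_t \) dwarfs it at every stage \( t \) of \module{R}{j,e}'s incarnation (\cref{cond:building-A:large}), the stage-\( t \) commitment can fire at \( s \) only when \( t \) predates that incarnation, where the \( \REset(A){i} \)-side of the commitment is untouched and \( Y_{i,\cdot}(y) \) is frozen by \cref{lem:bY-h-cancels}.  For \( b_k \entersat{s} \setcol{A}{2} \) the hypothesis \( s_k - 1 \Yagree s-1 \) of \cref{proc:even:state-one:change-to-two} together with the fact that the large value \( b_k \) cancels every enumeration into \( Y_i \) above column \( 2 \) made since \( s_k - 1 \) gives \( Y_{i,s} \supfun Y_{i,s_k} \), and the inductive hypothesis at \( s_k \) finishes it; the case \( a \entersat{s} \setcol{A}{1} \) is identical with \( s_k \) replaced by the stage at which \( b \) was enumerated.

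The main obstacle is the claimed-enumeration case, intertwined with the \( c_k \) case for \( i \leq \pair{j}{e} \): I must show that no enumeration into \( \setcol{Y_i}{\leq 2} \) performed by \( \Strat<h>{}{}{} \) or by a sub-strategy it calls can leave \( Y_{i,\cdot}(y) \) disagreeing with a live \( \Gfunc{i}{} \)-commitment, even when the relevant value has entered and then left \( \REset(A){i} \) in the meantime.  This is precisely what the bookkeeping of \cref{proc:odd} — the markers \( \bY{i} \) and \( \Sb<h>{i} \) and the ``jump back'' step \cref{proc:odd:jump-back} — is engineered to guarantee, and it is where \cref{lem:bY-h-cancels} (whose own proof was deferred precisely so that it can be carried out in tandem with the present induction) and \cref{lem:descent} carry the real weight.
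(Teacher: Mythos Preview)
Your approach is essentially the paper's: reduce to the monotonicity claim (the paper's displayed implication, that if \( A_s \Tplus \REset[s](A_s){i} \supfun A_t \Tplus \REset[t](A_t){i} \) with no injury in \( (t,s] \) then \( Y_{i,t}\restr{t} \subfun Y_{i,s} \)), and prove it by induction on \( s \) with a case split on what is enumerated. Your finer split (separating \( b_k \) from \( a \), and claimed from unclaimed enumerations) is if anything clearer than the paper's compressed treatment, which lumps all of \( \setcol{A}{\leq 2} \) into one line.

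One point to flag: your argument for well-definedness invokes ``the construction is finite injury --- each \( \module{R}{j,e} \) acts finitely often'' to get \( s^{\ast} \) to stabilize, but that is \cref{prop:R-finite}, which the paper proves only \emph{after} the present lemma. The paper avoids this forward reference by observing that the inductive claim alone already gives consistency of the \( \Gfunc{i}{} \) axioms --- i.e., well-definedness of \( \Gfunc{i}{} \) as a partial functional --- regardless of whether \( s^{\ast} \) stabilizes. You should drop the finite-injury appeal here; the stabilization of \( s^{\ast} \) is only needed later (in the proof of \cref{thm:nonextendable-three-REA}) when you want \( \Gfunc{i}{A \Tplus \REset(A){i}} \) to be total, and by then \cref{prop:R-finite} is available.
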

Note that here \( \Gfunc{i}{} \) really refers to whatever version of the functional is left uninjured at the end of stage \( s \) since we will reinitialize each \( \Gfunc{i}{} \) finitely many times.  
 
 \begin{proof}
 By \cref{def:funcs} it is enough to show that \( \Gfunc{i}{} \) is well-defined.  Specifically, we must show that if \( t < s \) and  \module{P}{i} is not injured during the interval \( (t,s] \) then
\begin{subequations}
    \begin{align}
    A_s \Tplus \REset[s](A_s){i} \supfun & A_t \Tplus \REset[t](A_t){i} \label{eq:lem:maintain-computations-G:inductive:antecedent} \\
         & \implies Y_{i,t}\restr{t} \subfun Y_{i,s} \label{eq:lem:maintain-computations-G:inductive:cons}
    \end{align}
    \label{eq:lem:maintain-computations-G:inductive}
\end{subequations}

 The claim is trivial if $ t = s$ so we assume the claim holds for all $ s' $ with $ t \leq s' < s $ and prove it also holds
at $ s $. This is evident if no axioms placing an element into $ A $ or $ W_{i}^{A} $ are enumerated at stage
$ s $.  So, suppose, for contradiction, that some element is enumerated
at stage $ s $ and $\module{P}{i}$ is not injured during the interval
$ (t,s] $ but that eq. \textup{(\ref
{eq:lem:maintain-computations-G:inductive})} fails.
We consider the following cases.
\begin{description}
\item[Case $ z \entersat{s} W^{A}_{i} $ and $ A_{s - 1} \subfun A_s $:]   For the claim to fail we must have $ z $ in (our approximation to) $ W^{A}_{i} $ at stages $ t $ and $ z < t $ but not $ s - 1 $.  Thus, we must have some $ q < l_t $ in $ A_{s - 1} \subfun A_s $ but not $ A_t $  rendering eq. \textup{(\ref
{eq:lem:maintain-computations-G:inductive:antecedent})} false.
\item[Case $ q \entersat{s} A $:] Suppose that $\mathcal{R}_{j,e}$ enumerates
$ q $. Note that if $i > \langle j,e\rangle $ and $\mathcal{R}_{j,e}$
takes any
action then $\mathcal{P}_{i}$ is injured giving us the desired
conclusion trivially.
Thus, we can assume $i \leq\langle j,e\rangle $ and consider the following
subcases.
\begin{description}
\item[Case $ q=c_{k} \entersat{s} A{}^{[3]} $:] We must have
$ t < s_{-1} $ or $ c_{k'} $ makes
eq. \textup{(\ref
{eq:lem:maintain-computations-G:inductive:antecedent})} false where
$ s_{k'} $ is the least active stage greater than $ t $. If
$ A_{t} \oplus W_{i,t}^{A_{t}} \nprec A_{s_{-1}} \oplus
W_{i,s_{-1}}^{A_{s_{-1}}} $ then that disagreement would persist
until stage $ s $ making
eq. \textup{(\ref
{eq:lem:maintain-computations-G:inductive:antecedent})} false. But
Lemma~\ref{lem:bY-h-cancels} lets us conclude that
$ Y_{i,s_{-1}}\restr{t} \subfun Y_{i,s} $ so
$ Y_{i,t}\restr{t} \subfun Y_{i,s} $.

\item[Case $ q \entersat{s} A{}^{[\leq2]} $:] This follows from the inductive
assumption plus Lemma~\ref{lem:bY-h-cancels} which ensures that if
$ t < s_{-1} $ then
eq. \textup{(\ref{eq:lem:maintain-computations-G:inductive:cons})}
holds. If
$ t \geq s_{-1} $ then the enumeration of some $ c_{k} $ or
$ q $ itself renders
eq. \textup{(\ref
{eq:lem:maintain-computations-G:inductive:antecedent})} false.\qedhere
\end{description}
\end{description}
\end{proof}
  
We can now prove the desired proposition.

\begin{restatable}{proposition}{maintaincomps}\label{prop:maintain-computations}
\Cref{cond:Y-approx:compute-right} is satisfied, i.e., \( \Tfunc{} \) and \( \Gfunc{i}{} \) are well-defined and correct at every stage.
\end{restatable}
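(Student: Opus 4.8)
The plan is to derive the proposition as an essentially immediate consequence of \cref{lem:maintain-computations-T,lem:maintain-computations-G}, with only a short remark needed for \( \Tfunc{} \). First consider the well-definedness of each \( \Gfunc{i}{} \). By \cref{def:funcs} this functional is assembled by committing, at various stages \( s \), to values \( \Gfunc[s]{i}{X \Tplus Z}(y) = Y_{i,t}(y) \) for a least suitable \( t \); and \cref{lem:maintain-computations-G} shows that any two stages \( t \leq t' \) at which the defining clause can fire for a given \( y \) satisfy \( Y_{i,t}\restr{t} \subfun Y_{i,t'} \), so the clause assigns a single value. Here the hypothesis of \cref{lem:maintain-computations-G} that \( \module{P}{i} \) is uninjured on \( (t,t'] \) holds automatically, since \cref{def:funcs} only ever consults stages \( \geq s' \), where \( s' \) is the last reinitialization of \( \module{P}{i} \); no consistency is demanded across distinct non-injured epochs, as each reinitialization simply restarts the functional. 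Hence \( \Gfunc{i}{} \) is a genuine partial computable functional.

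Next I would read off the two stagewise equations of \cref{cond:Y-approx:compute-right}. The identity \( \Gfunc[s]{i}{A_s \Tplus \REset[s](A_s){i}}(x) = Y_{i,s}(x) \), whenever both sides converge, is exactly the inclusion \( \Gfunc{i}{A_s \Tplus \REset[s](A_s){i}} \subfun Y_{i,s} \) provided by \cref{lem:maintain-computations-G}; note the right-hand side converges whenever the left does, since the domains \( \dom Y_{i,t} \) only grow with \( t \). The identity \( \Tfunc[s]{Y_{i,s}}(x) = \REset[s](A_s){i}(x) \), whenever both sides converge, follows from \cref{lem:maintain-computations-T}: that \namecref{lem:maintain-computations-T} establishes that \( \Tfunc[s]{Y_{i,s}} \) is compatible with \( \REset[s](A_s){i} \), so the two agree wherever both are defined, while its \emph{moreover} clause disposes of the remaining case \( \REset[s](A_s){i}(x)\diverge \), in which \( \Tfunc[s]{Y_{i,s}}(x) = 0 \) and nothing more is asked.

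Finally, \( \Tfunc{} \) needs no separate well-definedness argument: unlike \( \Gfunc{i}{} \) it is not built by enumerating commitments but given outright by the explicit rule of \cref{def:funcs}, which reads \( \Tfunc[s]{Z}(x) \) off a finite portion of \( Z \); and for each fixed \( x \) only finitely many locations \( \pair{x}{k} \) are ever placed into \( \setcol{Y_i}{3} \) (one per change of value of the approximation \( \REset[s](A_s){i}(x) \), of which there are only finitely many), so on the genuine set \( Y_i \) the maximum in \cref{def:funcs} is over a finite set and \( \Tfunc{Y_i} \) is total. Combining these observations yields \cref{cond:Y-approx:compute-right}. I do not anticipate a genuine obstacle here: the substantive work already lives in \cref{lem:bY-h-cancels,lem:maintain-computations-G}, and the only point that calls for a little care is keeping the several reinitialized versions of \( \Gfunc{i}{} \) straight and confirming that \cref{def:funcs} never reaches back past the current epoch, so that the finitely many reinitializations cannot compromise well-definedness of the functional that survives.
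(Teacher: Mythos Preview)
Your proposal is correct and follows the same approach as the paper, which simply states the result is immediate from \cref{lem:maintain-computations-T,lem:maintain-computations-G}. You have merely unpacked in more detail why those two lemmas suffice (well-definedness of \( \Gfunc{i}{} \), the two stagewise identities, and the triviality of the \( \Tfunc{} \) case), which is fine but not required.
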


 \begin{proof}
 Immediate from \cref{lem:maintain-computations-T,lem:maintain-computations-G}
 \end{proof}

\subsection{Winning Strategies}\label{sec:verification:winning}

In this \namecref{sec:verification:winning} we seek to show that \module{R}{j,e} only acts finitely many times.  To prove this we need to show that our strategies eventually guarantee we find agreement between stages.  But the strategy \( \Strat<h>{}{}{} \) doesn't execute  \( \Strat<h_r>{}{}{} \) on consecutive intervals \( [s_k, s_{k+1}] \) .  It may, instead, skip a number of active stages during which \( \Strat<h>{}{}{} \) executes some other sub-strategy only to return to \( \Strat<h_r>{}{}{} \) much later so we need some way to talk about intervals which behave as if we had run \( \Strat<h_r>{}{}{} \) on consecutive intervals.

To that end, we make the following definitions (these are implicitly relative to a particular module \( \module{R}{j,e} \)). 


\begin{definition}\label{def:h-equiv}
    Say that  \(  t \) is an \( h \) successor of \( s < t \) (relative to the module \module{R}{j,e}), just if \( t \) is an initial enumeration for \( \Strat<h>{}{}{} \) and letting \( \hat{Y}_i = \Y-[s]{i}<h> \)   we have  
    \[ \forall(i \leq \pair{j}{e})\forall(x < h(i))\left(\setcol{Y_{i,s}}{\leq 2}\colrestr{s} \subfun \setcol{\hat{Y}_i}{\leq 2} \land \setcol{\setcol{Y_{i,s}}{3}}{x} \subfun \setcol{\setcol{\hat{Y}_i}{3}}{x} \right) \]     
\end{definition}

Informally speaking, \(  t \) is an \( h \) successor of \( s \) just if for all \(i \leq \pair{j}{e} \)  the state of \( Y_{i} \) when \( \Strat<h>{}{}{} \) receives it's initial enumeration during \( t \) agrees with that of \( Y_{i,s} \) excepting only elements \( \Strat<h>{}{}{} \) can ignore as too large for it's concern.  Using this notion we can now define a notion of a sequence of intervals that look as if they are consecutive as far as \( \Strat<h>{}{}{} \) is concerned.   

\begin{definition}\label{def:virtual-contiguous}
    Say that a sequence \( [\hat{t}_0, \hat{s}_0], [\hat{t}_1, \hat{s}_1] \ldots, [\hat{t}_n, \hat{s}_n] \) of intervals is \( \Strat<h>{}{}{} \)-\textbf{virtually consecutive} (or just \( h \)-virtually consecutive) if
    \begin{enumerate}
            \item\label{def:virtual-contiguous:not-reinitialized} \( \Strat<h>{}{}{} \) is not reinitialized   during \( (\hat{t}_0, \hat{s}_n) \) 
            \item \label{def:virtual-contiguous:initial} At stage \( \hat{t}_0 \)  \( \Strat<h>{}{}{} \) is in it's initialized state, i.e., hasn't been executed since it was last reinitialized.
            \item \label{def:virtual-contiguous:intervals} \( \hat{s}_k \) is the least active stage greater than \( \hat{t}_k \) 
            \item \label{def:virtual-contiguous:successor} For all \( k < n \), either \( \hat{t}_{k+1} \) is an \( h \)-successor of \( \hat{s}_k \)  or \( \hat{t}_{k+1} = \hat{s}_{k+1} \) and no elements are enumerated during the interval ending at \( \hat{s}_{k+1} \) and started at the prior active stage.
            \item \label{def:virtual-contiguous:contiguous} Every interval \( \hat{t}_k, \hat{s}_k \) satisfying the above between  \( \hat{t}_0  \) and \( \hat{s}_n \) appears in the above list.

        \end{enumerate}    
\end{definition}

Note that if the sequence is genuinely consecutive, i.e.,  \( \hat{t}_{k+1} = \hat{s}_k + 1 \) for all \( k \), then it is trivially \( h \)-virtually consecutive.  Now let's define what it means for a strategy to be successful.

\begin{definition}\label{def:winning}
    Say that the strategy \( \Strat<h>{}{}{} \) is a \textbf{winning strategy} if there is a number \( N_h \) (the winning number for \( h \)) such that if \( [\hat{t}_0, \hat{s}_0], \ldots, [\hat{t}_{N_h - 1}, \hat{s}_{N_h - 1}]  \) is an \( h \) virtually consecutive sequence of intervals of length \( N_h \)  then, for \textbf{any} acceptable enumeration of elements into the sets \( \REset(A){i}, i \in \supp h \)  satisfying \( x < h(i) \) there are distinct  \( k, k' < N_h \) if the recommendations of \( \Strat<h>{}{}{} \) are followed we have \( \hat{s}_k -1 \Yagree \hat{s}_{k'} -1 \).   
\end{definition}

We leave the task of formalizing the notion of relative to any acceptable enumeration to the pedantic reader but take the intended content to be clear.  In particular, a winning strategy is guaranteed to produce agreement in a bounded number of intervals even if we permute the indices to \re sets.  With this notion in place we can finally demonstrate that our strategies are winning strategies.

\begin{lemma}\label{lem:win-induct}
    Suppose that for all \( h_r, r < \card{\supp h} \) the strategy \( \Strat<h_r>{}{}{} \) is a winning strategy with winning number \( N_{h_r} \) then  \( \Strat<h>{}{}{} \) is a winning strategy.
\end{lemma}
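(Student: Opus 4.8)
The strategy $\Strat<h>{}{}{}$ acts by delegating to substrategies $\mathfrak{S}_r = \Strat<h_r>{}{}{}$, one for each index $i_r\in\supp h$, where $\int h_r<\int h$, so by the inductive hypothesis each $\Strat<h_r>{}{}{}$ is winning with some winning number $N_{h_r}$. The core observation is the priority structure: $\Strat<h>{}{}{}$ gives highest priority to agreement among intervals whose initial enumeration went into $\REset(A){i_0}$, next to those into $\REset(A){i_1}$, and so on, and seeing an initial enumeration into $\REset(A){i_r}$ reinitializes $\mathfrak{S}_{r'}$ for all $r'>r$. So the plan is to run the usual finite-injury counting argument \emph{on top of} the substrategies: bound the number of $h$-virtually-consecutive intervals before we are forced to produce agreement, by bounding how many intervals can pass at each ``priority level'' before that level either wins or is injured by a higher one.

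\textbf{Key steps.} First I would set up the bookkeeping: along an $h$-virtually consecutive sequence $[\hat t_0,\hat s_0],\dots$, classify each interval by which $\REset(A){i_r}$ received the initial enumeration for $\Strat<h>{}{}{}$ (Steps~\ref{proc:odd:initial}--\ref{proc:odd:infer-b} of \cref{proc:odd}), or mark it ``trivial'' if no element was enumerated (in which case \cref{def:virtual-contiguous:successor} and the end-of-\cref{sec:overview:simplified-agreement} remarks give $\hat s_k-1\Yagree \hat s_{k'}-1$ for free with an adjacent non-trivial interval, so we may assume every interval is non-trivial). Second, I would verify the reduction: if we restrict attention to the subsequence of intervals whose initial enumeration went into $\REset(A){i_p}$ and which lie between two consecutive higher-priority ($i_r$, $r<p$) initial enumerations, then by \cref{lem:descent}\,(\ref{lem:descent:initial-enum-for-h}) the substrategy $\mathfrak{S}_p$ sees each such interval's start as a ``blank slate'' agreeing with $\hat s_{-1}$ on everything it cares about (elements $x<h_p(i)$ and columns $\leq 2$), and by \cref{lem:descent}\,(\ref{lem:descent:unique-h},\ref{lem:descent:straat-only-resp-for-one},\ref{lem:descent:straat-controls-eligible-coders}) $\mathfrak{S}_p$ alone governs the relevant cells; hence this subsequence is $\Strat<h_p>{}{}{}$-virtually consecutive in the sense of \cref{def:virtual-contiguous}. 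Third, apply the inductive hypothesis: once such a subsequence reaches length $N_{h_p}$, $\mathfrak{S}_p$ forces two of its intervals $k,k'$ with $\hat s_k-1\Yagree^{} \hat s_{k'}-1$ when its recommendations are followed; but I must check that ``following $\mathfrak{S}_p$'s recommendations'' is exactly what $\Strat<h>{}{}{}$ does at these intervals — this is where \cref{proc:odd:jump-back} restoring the state as of $t_n$, \cref{proc:odd:restore-state-for-sub}, \cref{proc:odd:infer-b} (setting $\Sb<h>{i_p}$ to propagate the needed $\setcol{Y}{2}$-cancellation to the next active stage), and \cref{lem:bY-h-cancels} together guarantee that $\Strat<h>{}{}{}$'s handling of the intervening (lower-priority or higher-priority-injured) intervals does not disturb the columns $\leq 1$ that $h_p$-agreement depends on. Fourth, the counting: if no priority level $p$ ever reaches its threshold $N_{h_p}$, then between any two consecutive $i_0$-initial-enumerations there are fewer than $N_{h_1}$ many $i_1$-initial-enumerations (before $\mathfrak{S}_1$ would win or get injured), between consecutive $i_1$'s fewer than $N_{h_2}$ many $i_2$'s, etc., and fewer than $N_{h_0}$ many $i_0$-initial-enumerations total; multiplying gives an explicit bound $N_h = 1 + \sum_{\text{products}}$ (concretely something like $N_h = 2 + \prod_{r<N} N_{h_r}$ after accounting for trivial intervals and the off-by-one in ``virtually consecutive''), beyond which some level must have hit its threshold and thus produced agreement. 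I would then note that agreement for all $i\in\supp h$ on the first two columns (\cref{def:Y-agree}) is exactly what this delivers, since for $i\notin\supp h$ agreement is automatic and for $i=i_p$ the winning pair at level $p$ gives $\hat s_k\Yagree<i_p>\hat s_{k'}$ while all \emph{higher} priority indices $i_r$, $r<p$, already agreed (they did not receive an initial enumeration between $\hat s_k$ and $\hat s_{k'}$, by the definition of the level-$p$ subsequence).

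\textbf{Main obstacle.} The delicate point is Step three: showing that $\Strat<h>{}{}{}$'s behavior on the intervals it ``skips'' at level $p$ (those handled by substrategies $\mathfrak{S}_r$, $r\neq p$, or by the injury machinery in \cref{proc:odd:initial}) genuinely preserves the $h_p$-relevant state, so that $\mathfrak{S}_p$'s win translates into a win for $\Strat<h>{}{}{}$. This requires carefully tracking that: (i) enumerations into $\setcol{Y_{i_p}}{1}$ only happen via \cref{proc:odd:jump-back} and land in $(l_{t_n}, l_{t_n+1})$ with $t_n$ the last level-$p$ stage, by \cref{lem:descent}\,(\ref{lem:descent:q-small}); (ii) the $\setcol{Y_{i_p}}{2}$-cancellations queued in $\Sb<h>{i_p}$ and $\bY{i_p}$ are released at precisely the right active stage (\cref{proc:expansionary:enum-bY}) and restore the pre-$t_n$ state, which is where \cref{lem:bY-h-cancels}\,(\ref{lem:bY-h-cancels:prepped}) is doing the heavy lifting; and (iii) lower-priority injuries never matter because canceled intervals are, by construction of the $u^r_m$ indexing in \cref{sec:overview:general-agreement}, exactly ones in which no $x<h_p(i_p)$ entered $\REset(A){i_p}$. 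Getting the quantifier bookkeeping in \cref{def:h-equiv} and \cref{def:virtual-contiguous} to line up with what \cref{proc:odd} actually produces is the bulk of the real work; the arithmetic bound $N_h$ itself is then routine.
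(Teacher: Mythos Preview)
Your proposal is correct and follows essentially the same approach as the paper: a finite-injury counting argument over the priority levels $i_r\in\supp h$, finding a long-enough subsequence at some level~$p$ that is $h_p$-virtually consecutive, and then applying the inductive hypothesis to $\mathfrak{S}_p$, with the product bound $N_h\approx\prod_r(N_{h_r}+2)$ (the paper uses exactly $\prod_r(N_{h_r}+2)$). Your identification of the main obstacle---verifying that the skipped intervals do not disturb the $h_p$-relevant state, via \cref{lem:descent} and \cref{lem:bY-h-cancels}---matches the paper's verification that the subsequence satisfies \cref{def:virtual-contiguous}; the only superfluous step in your outline is the index-by-index decomposition of $\Yagree$ at the end, since the substrategy's win already delivers full $\Yagree$ agreement by \cref{def:winning}.
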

\begin{proof}
Suppose that \( [\hat{t}_0, \hat{s}_0], [\hat{t}_1, \hat{s}_1] \ldots, [\hat{t}_{N-1}, \hat{s}_{N-1}] \) is a sequence of \( h \) virtually consecutive intervals with \(\displaystyle  N = \prod_{r < \card{\supp h}} (N_{h_r} +2)   \) with \( z_k \entersat{\hat{t}_k} \REset(A){i_k} \) We argue there are \( k, k' < N \) with  \( \hat{t}_k \Yagree \hat{t}_{k'} \).   


We now note that there must be a subsequence of length at least \( N_{h_r} + 2 \) on which we always execute the same substrategy but never reinitialize that strategy.  Specifically, we observe that there is an \( \hat{i} \in \supp h \) and a subsequence \( \hat{t}_{n_k}, k < N_{h_r} + 2 \) such that \( i_{n_k} = \hat{i} \)  and for all \( n'  \) with \( n_k < n' < n_{k+1} \)  we have \( i_{n'} > \hat{i}  \).  This follows since if this isn't witnessed by \( i_0 \) then there must be a subsequence of length at least \(\displaystyle  N = \prod_{1 < r < \card{\supp h} } (N_{h_r} +2)   \).  

As the support of \( h \) is finite we must eventually find such a subsequence.   Call this a good \( \hat{i} \) subsequence and let \( \hat{r} = \card{\set{y \in \supp h \land y < \hat{i}}} \), i.e., \( \Strat<{h_\hat{r}}>{}{}{} \) is the strategy called (if necessary) by \( \Strat<h>{}{}{} \) in an interval in which the initial enumeration was into \( \REset(A){\hat{i}} \).   Thinking of this in terms of a finite injury construction this subsequence represents the stages at which the \( \hat{i} \) strategy gets attention without intervening injury.


Suppose that for some \( k < N_{h_{r'}} + 1 \) we fail to  enumerate any \( x < h(i) - \delta_{\hat{i}}(i) \) into \( \REset(A){i} \)   during  \( (\hat{t}_{n_k}, \hat{s}_{n_k}) \).    We claim that  \( \hat{s}_{n_k} - 1 \Yagree \hat{s}_{n_k+1}  - 1 \). By \( h \)-virtual connectivity when \( \Strat<h>{}{}{} \) begins execution at stage \( \hat{t}_{n_{k+1}}  \), \( \setcol{Y_i}{\leq 2} \) is compatible with it's state at stage \( \hat{s}_{n_k} - 1 \).  Note that, by \cref{def:winning}, we may assume that only \( x < h(i) \) are enumerated into any \( \REset(A){i} \).  Thus, in any interval the first element is enumerated at stage \( \hat{t}_k \).   Thus, when \( \Strat<h>{}{}{} \) executes at stage \( \hat{t}_{n_{k+1}}  \) the action of  \cref{proc:odd:jump-back} cancels any elements enumerated both at stage \( \hat{s}_{n_k} \) and by any ancestor strategies executing \cref{proc:odd:restore-state-for-sub} (which must occur earlier in that stage).

As no \( x < h(i) - \delta_{\hat{i}}(i)  \) was enumerated during the interval \( (\hat{t}_{n_k}, \hat{s}_{n_k}) \) \cref{proc:odd:restore-state-for-sub} never enumerates any elements and as no parent strategy enumerates any elements into the first two columns of \( Y_i \) after executing it's child that agreement persists until the end of stage, and, indeed, the end of stage \( \hat{s}_{n_k+1}  - 1 \).

Thus, we can assume that we have a subsequence of length  \([\hat{t}_{n_k} , \hat{s}_{n_k}], k < N_{h_{r'}} + 1  \) with \(  \mathfrak{r}_{n_k} = r' \) of length \( N_{h_{r'}} + 1 \) during which we never  reinitialize \( \mathfrak{S}_{r'} \) and during each interval \( [\hat{t}_{n_k} , \hat{s}_{n_k}) \) there is a \( \hat{t}'_{n_k} \) at which we execute  \( \Strat<h_{r'}>{}{}{} \).  To verify the claim we first show that the sequence \([\hat{t}'_{n_k} , \hat{s}_{n_k}], k < N_{h_{r'}}  \)  is \( h_{r'} \) virtually consecutive.  Note that \( i_{r'} = \hat{i} \) here and that we may presume that this is the first such subsequence, i.e., \( \hat{t}_{n_0} \) is the first execution of \( \Strat<h_{r'}>{}{}{} \) since last reinitialization.

This leaves only part \ref{def:virtual-contiguous:successor} of \cref{def:virtual-contiguous} to be verified but this follows by \cref{lem:bY-h-cancels}.   Now suppose that for some \( k \) the only element enumerated during the interval  \( [\hat{t}_{n_k} , \hat{s}_{n_k}) \) with \( k < N_{h_{r'}}  \)  into \( \REset(A){\hat{i}} \).  By the same reasoning above we'll see agreement at \( \hat{s}_{n_{k+1}} \)  since this means that for \( \hat{i} \) we behave as above and the inductive hypothesis handles enumeration into \( \REset(A){i},i \neq \hat{i} \).  Thus, we can assume that in each interval \( [\hat{t}_{n_k} , \hat{s}_{n_k}) \)   we see some \( x < h(\hat{i}) \) enumerated into \( \REset(A){\hat{i}} \).  In this case, when that \( x \)  is enumerated the operation of \cref{proc:odd:jump-back} by \( \Strat<h_{r'}>{}{}{} \) cancels any element enumerated at stage \( \hat{t}_{n_k} \) during \cref{proc:odd:restore-state-for-sub}.  Thus, the situation with respect to the first two columns of the sets \( Y_i \)  is no different than if enumeration was restricted only to those \( x \) entering \( \REset(A){i} \) where \( x < h_{r'}(i) \).

\end{proof}

We can now prove our desired result.

\begin{proposition}\label{prop:R-finite}
    \module{R}{j,e} acts only finitely many times.
\end{proposition}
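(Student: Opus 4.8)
The plan is to combine an elementary finite-injury count with the winning-strategy machinery of \cref{sec:verification:winning}. As a first step I would record that every strategy $\Strat<h>{}{}{}$ is a winning strategy, with some winning number $N_h$. This is an induction on $\int h$, the integral with respect to the counting measure on $\omega$, which is a well-founded rank on $\omega^{\omega}$ with $\int h_r < \int h$ for every sub-strategy $\Strat<h_r>{}{}{}$ that $\Strat<h>{}{}{}$ may call. The base case is $\Strat<0>{}{}{}$, which is trivially winning since it is never responsible for any enumeration, and the inductive step is precisely \cref{lem:win-induct}. In particular $\Strat<h^{j,e}>{}{}{}$ is a winning strategy; write $N := N_{h^{j,e}}$.

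Next I would argue by induction on the priority $2\pair{j}{e}+1$. Since each module $\module{P}{i}$ only defines $\Gfunc{i}{}$ and never acts on any set or module, the only modules that can reinitialize $\module{R}{j,e}$ are the $\module{R}{j',e'}$ of strictly higher priority, i.e.\ with $\pair{j'}{e'} < \pair{j}{e}$; by the induction hypothesis each of these acts only finitely often, so $\module{R}{j,e}$ is reinitialized only finitely often. Fixing the last such reinitialization, let $s_{-1}$ be the construction's value of $s_{-1}$ for $\module{R}{j,e}$ from then on, so that afterwards $\module{R}{j,e}$ is never reinitialized and the strategy $\Strat<h^{j,e}>{}{}{}$ initialized at $s_{-1}$ runs without reinitialization.

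The heart of the proof is to track $\module{R}{j,e}$ through the states $\diverge,0,1,2,3$ of \cref{proc:even}. From $\diverge$ it acts once (choosing $c_0$) and enters state $0$; from $0$ it acts at most once more (enumerating $c_0$) and enters state $1$; from $2$ it acts at most once more (enumerating $a$) and enters state $3$, where it never acts again. So if $\module{R}{j,e}$ acted infinitely often after $s_{-1}$ it would have to remain in state $1$ and enumerate $c_0, c_1, c_2, \dots$ into $\setcol{A}{3}$ at infinitely many active stages $s_0 < s_1 < \cdots$. The intervals delimited by $s_{-1}, s_0, s_1, \dots$ then form a genuinely consecutive --- hence, by the remark after \cref{def:virtual-contiguous}, $h^{j,e}$-virtually consecutive --- sequence, and throughout this regime the construction follows the recommendations of $\Strat<h^{j,e}>{}{}{}$ on exactly the claimed enumerations, which by \cref{rule:claim,rule:enum-response} are precisely the $z$ with $z < h^{j,e}(i)$ (the unclaimed ones, with $i > \pair{j}{e}$ or $z \geq s_{-1}$, being exactly those for which agreement is automatic and which $\Strat<h^{j,e}>{}{}{}$ may ignore). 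Applying the winning property of $\Strat<h^{j,e}>{}{}{}$ to the first $N$ of these intervals yields indices $p < q < N$ with $s_p - 1 \Yagree s_q - 1$. But at stage $s_q$ the module is in state $1$ and, having passed its first (computation) step, reaches \cref{proc:even:state-one:change-to-two}, which checks precisely whether some index below $q$ satisfies this agreement condition; since $p$ does, the module moves to state $2$ and ends the stage instead of enumerating $c_q$ --- contradicting that $s_q$ is a state-$1$ active stage. Hence $\module{R}{j,e}$ leaves state $1$ after at most $N$ active stages and acts only finitely often, completing the induction.

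I expect the main obstacle to be bookkeeping rather than any new idea: one must check that ``$\module{R}{j,e}$ follows the recommendations of $\Strat<h^{j,e}>{}{}{}$ for the acceptable enumeration actually produced by the construction'' is exactly what \cref{rule:unclaimed,rule:claim,rule:enum-response} arrange, and that the active stages of a module stuck in state $1$ really do index a sequence of intervals that is virtually consecutive in the precise sense of \cref{def:virtual-contiguous}. Both are routine given the conventions already fixed, but they are where care is needed.
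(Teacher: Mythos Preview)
Your proposal is correct and follows essentially the same route as the paper's proof: show that \( \Strat<h^{j,e}>{}{}{} \) is winning (you spell out the induction on \( \int h \) with base case \( h=0 \), which the paper leaves implicit when it cites \cref{lem:win-induct}), then argue that a module acting infinitely often must be stuck in state~1, build an \( h^{j,e} \)-virtually consecutive sequence from the resulting active stages, and derive a contradiction from the agreement guaranteed by the winning property. The paper phrases its virtually consecutive sequence slightly differently---it takes \( \hat{t}_k \) to be the first stage after the previous active stage at which \( \module{R}{j,e} \) actually claims an enumeration, rather than invoking the ``genuinely consecutive'' remark---and handles the unclaimed enumerations by the same observation you flag in your final paragraph, but the substance is identical.
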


\begin{proof}
Suppose not.  The only possibility is that some module \module{R}{j,e} is initialized at some stage \( s_{-1} \) and never subsequently reinitialized by the action of any higher priority module.  But this can only happen if at all active stages \( s_k, k > 0 \) we have \( \Rstate{s_k}{j,e} = 1 \). 

But, by \cref{lem:win-induct} we have that \( \Strat<h^{j,e}>{}{}{} \) is a winning strategy.   Consider the sequence \( [\hat{t}_k, s_k] \) where \( \hat{t}_k \) is the first stage subsequent to \( s_k \) at which \( \module{R}{j,e} \) claims an enumeration.  As enumerations that go unclaimed or are claimed by modules with lower priority don't enumerate elements that are small relative to the last stage at which \module{R}{j,e} acts into the first two columns of any \( Y_i \) it is trivial that this sequence is \( h^{j,e} \) virtually contiguous.  It thus follows that there are \( k, k' \) with \( s_k - 1 \Yagree s_{k'} - 1 \) and thus at stage \( s_{k'} \) we have \( \Rstate{s_{k'}}{j,e} = 2 \) contrary to our assumption.  Hence every module acts only finitely often.   
\end{proof}

\subsection{Putting It Together}\label{sec:verification:final}

\begin{lemma}\label{lem:cond-Rje-satisfaction}
    Every requirement of the form \req{R}{j,e} is satisfied.
\end{lemma}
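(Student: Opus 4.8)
The plan is to combine Proposition~\ref{prop:R-finite} (which tells us \module{R}{j,e} acts only finitely often) with Lemma~\ref{lem:max-flip-flops} (which bounds flip-flops at $3$) to show that the final state reached by \module{R}{j,e} witnesses satisfaction of \req{R}{j,e}. First I would fix $j,e$ and let $s_{-1}$ be the last stage at which \module{R}{j,e} is (re)initialized; such a stage exists since only finitely many higher-priority modules act and each acts finitely often (by Proposition~\ref{prop:R-finite} applied to those of the form \module{R}{j',e'}, and trivially for the \module{P}{i}, which never injure anything). After $s_{-1}$ the module \module{R}{j,e} is never reinitialized, so by Proposition~\ref{prop:R-finite} it acts only finitely many times after $s_{-1}$; hence there is a final state $\Rstate{s}{j,e} \in \set{0,1,2,3}{}$ that is never left. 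I would argue by cases on this terminal state.

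If the terminal state is $0$, then after the stage where state $0$ was entered, $\SelfComp{c_0}{s-1}$ never holds (otherwise \module{R}{j,e} would act and move to state $1$); so for the fixed value $c_0 \notin \setcol{A}{3}$, $\SelfComp[j,e]{c_0}{s}$ fails for all large $s$, and by the contrapositive of Lemma~\ref{lem:inf-many-good-comps} we cannot have both $\recfnl{j}{A}{} = X_e$ and $\recfnl{j}{X_e}{} = A$, which is exactly \req{R}{j,e}. If the terminal state is $1$, then after that point $\SelfComp{c_m}{s-1}$ never holds for the (eventually fixed, since \module{R}{j,e} stops acting) value $c_m$ waiting to enter $\setcol{A}{3}$, and $c_m \notin \setcol{A}{3}$; again Lemma~\ref{lem:inf-many-good-comps} gives \req{R}{j,e}. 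The interesting cases are the terminal states $2$ and $3$. In state $3$ we have enumerated $c = c_k$ into $\setcol{A}{3}$ at stage $\hat{s}_0 + 1$, then $b = b_k$ into $\setcol{A}{2}$ at stage $\hat{s}_1 + 1$ (canceling $c$), then $a$ into $\setcol{A}{1}$ at stage $\hat{s}_2 + 1$ (canceling $b$): this is the full three-step diagonalization. I would observe that $\hat{s}_0, \hat{s}_1, \hat{s}_2$ together with any later stage $s_3$ at which $\SelfComp{c}{s_3}$ held would form a $3$-flipflopping sequence at $c$ witnessed by the approximations $\xi_l$ from the computations $\SelfComp{c}{\hat{s}_l}$: part~\ref{def:k-flipflop:change} holds because each of the three enumerations flips $A_s(\pair{3}{c})$, part~\ref{def:k-flipflop:computation} by construction (we only act from state $0,1,2$ when $\SelfComp{c}{\cdot}$ holds), and part~\ref{def:k-flipflop:alternation} ($A_{\hat{s}_l} \subfun A_{\hat{s}_{l+2}}$) because $b_k$ restores the column-$2$ state seen at $\hat{s}_0$ and $a$ restores the column-$1$ state seen at $\hat{s}_1$, together with the choice of $\hat{s}_0 = s_k - 1 \Yagree s - 1 = \hat{s}_1$ made in step~\ref{proc:even:state-one:change-to-two}. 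Hence by Lemma~\ref{lem:max-flip-flops} $\SelfComp{c}{s_3}$ cannot hold for any $s_3 > \hat{s}_2$, so again $\recfnl{j}{A}{} = X_e$ and $\recfnl{j}{X_e}{} = A$ cannot both hold by Lemma~\ref{lem:inf-many-good-comps}.

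For terminal state $2$, where we have enumerated $c$ into $\setcol{A}{3}$ and $b$ into $\setcol{A}{2}$ but never get to enumerate $a$, the argument is the same: $\SelfComp{c}{s-1}$ never holds after the stage where state $2$ was entered (else we would act and move to state $3$), and $c \notin \setcol{A}{3}$ in the final approximation (it was canceled by $b$ and, since \module{R}{j,e} never acts again and the column-$2$ and column-$1$ restraints imposed while waiting prevent further changes to the relevant initial segment, $A(\pair{3}{c}) = 0$); so Lemma~\ref{lem:inf-many-good-comps} again yields \req{R}{j,e}. The step I expect to be the main obstacle is verifying part~\ref{def:k-flipflop:alternation} rigorously in the state-$3$ case — i.e. that the enumeration of $b_k$ genuinely returns the column-$\le 1$ portion of $A_s$ (on the use of the relevant computations) to its $\hat{s}_0$ state, and that $a$ returns the column-$0$ portion (trivially) and column-$1$ to its $\hat{s}_1$ state — which requires carefully tracking that each $b_k, a$ was chosen large enough (so $l_{s_k-1} < b_k < l_{s_k}$) and depends on the correct prior columns, and that no higher-priority action disturbed these initial segments in between (which is exactly the content of $s_{-1}$ being the last reinitialization). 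A secondary subtlety is confirming that the restraint \module{R}{j,e} imposes while in states $0,1,2$ actually prevents the elements used in the back-and-forth computations from being enumerated into $A$ by lower-priority modules; this is where one appeals to the priority ordering and the convention that acting modules reinitialize all lower-priority modules.
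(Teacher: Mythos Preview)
Your proposal is correct and follows essentially the same approach as the paper's proof: both combine \cref{prop:R-finite} with \cref{lem:inf-many-good-comps} and \cref{lem:max-flip-flops} to show that the eventual terminal state of \module{R}{j,e} witnesses satisfaction of \req{R}{j,e}. The paper is slightly more compact, collapsing your states $0,1,2$ into a single observation (if $\SelfComp{c}{s}$ held with $\Rstate{s}{j,e}<3$ the module would act again), but the logic is identical.

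One small remark: in your state-$3$ case you cite the agreement $\hat{s}_0 = s_k-1 \Yagree s-1 = \hat{s}_1$ as contributing to part~\ref{def:k-flipflop:alternation} of \cref{def:k-flipflop}. That agreement concerns the sets $Y_i$, not $A$, and is irrelevant to the alternation condition $A_{\hat{s}_l}\subfun A_{\hat{s}_{l+2}}$; the alternation follows purely from the facts that $b_k$ is chosen with $l_{s_k-1}<b_k<l_{s_k}$ (so on $\dom A_{\hat{s}_0}$ the enumeration of $b_k$ leaves $\setcol{A}{\leq 2}$ unchanged while cancelling $c_k,\ldots,c_{m-1}$), and that $a$ is chosen large at stage $\hat{s}_1+1$ (so on $\dom A_{\hat{s}_1}$ the enumeration of $a$ leaves $\setcol{A}{1}$ unchanged while cancelling $b_k$ and restoring $c_k,\ldots,c_{m-1}$). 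This is exactly what you say in the first half of that sentence, so the argument stands; just drop the appeal to $\Yagree$.
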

\begin{proof}
Assume, for a contradiction, \req{R}{j,e} isn't satisfied (i.e., \( \recfnl{j}{A}{} = X_e \land \recfnl{j}{X_e}{} = A \)) and that \( t_0 \) is the last stage at which the module \( \module{R}{j,e} \) acts.  If \( s > t_0 \)  then \( \Rstate{t_0}{j,e} = \Rstate{s}{j,e}  \) and \( A_{t_0} \subfun A_s \)  since such a change would require \( \module{R}{j,e} \) act at that stage.  Hence, \( A_{t_0} \subfun A \). 

If the variable \( c \) is defined for the \( \module{R}{j,e} \)  at stage \( t_0 \) (i.e., if \( \Rstate{t_0}{j,e}  \geq 2  \)) we leave that value in place but if not set \( c = c_m \) where \( c_m \) is the least \( m \) such that \( c_m \) hasn't been enumerated into \( \setcol{A}{3} \).  Note that \( c_0 \) must be defined at \( t_0 \) since if \( \Rstate{t_0}{j,e} = \diverge \)  then  \( \module{R}{j,e} \) acts at stage \( t_0 + 1 \) 

By \cref{lem:inf-many-good-comps} let \( s > t_0 \) be a stage at which \( \SelfComp{c}{s} \) holds.  But if \( \Rstate{s}{j,e} < 3  \) then we have \( \module{R}{j,e} \) acts at stage \( s + 1 \).  On the other hand if \( \Rstate{s}{j,e} = 3  \) then we have \( \SelfComp<3>{c}{\hat{s}_0, \hat{s}_1, \hat{s}_2, s} \).  However, this is exactly what \cref{lem:max-flip-flops} denies.  Contradiction!    
\end{proof}

Before we can complete the proof of our main theorem we must fulfill the commitment we made in \cref{cond:building-A:infinite-correctness} and verify that \( \Gfunc{i}{A \Tplus \REset(A){i}} \) is total.

\begin{lemma}\label{lem:infinite-correctness}
    There are infinitely many stages \( s \) such that \( A_s \subfun A_t \) for all \( t > s \) and thus \( A_s \subfun A \).  Thus, \cref{cond:building-A:infinite-correctness} hold and \( \Gfunc{i}{A \Tplus \REset(A){i}} \) is total. 

\end{lemma}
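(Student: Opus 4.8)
The plan is the usual true-stages analysis for a finite-injury construction: \cref{prop:R-finite} bounds the injury, and the positive change property together with the ``choose large'' conventions control how the approximation $A_s$ behaves. First dispose of the trivial case: if only finitely many stages are active (some $\module{R}{j,e}$ acts), then past the last active stage no axiom is ever again enumerated into $\axset$, so $A_t$ changes only through the growth of $\dom A_t=\omega\colrestr{l_t}$; hence $A_s\subfun A_t$ for every sufficiently large $s$ and every $t>s$, and we are done. So assume there are infinitely many active stages. Since each $\module{R}{j,e}$ acts only finitely often and distinct modules have distinct priorities, the priorities of the modules acting at the successive active stages tend to infinity; consequently for each $N$ there is a least stage $\sigma_N$ such that no module of priority $\le N$ ever acts at a stage $\ge\sigma_N$, and $\sigma_N\to\infty$.

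Next comes the key ``large use'' observation. If $\module{R}{j,e}$ acts at some stage $\ge\sigma_N$ and $s_{-1}$ is the start of its current (uninterrupted) run, then no module of priority $<2\pair{j}{e}+1$, hence (as $2\pair{j}{e}+1>N$) no module of priority $\le N$, acts at any stage $>s_{-1}$; so minimality of $\sigma_N$ forces $s_{-1}\ge\sigma_N-1$, and every $c_k$, $b_k$, $a$ this module puts into $A$ after $\sigma_N$, being chosen large at a stage $\ge s_{-1}$, exceeds $l_{\sigma_N-1}$ (suppressing the harmless off-by-one). By \cref{lem:positive-change-property} a $1\mapsto 0$ change at a position in column $m$ forces a new element into some column $<m$, so enumerations into column $3$ and growth of $\dom A_t$ preserve $\subfun$, while an enumeration into column $1$ or $2$ alters only the element enumerated and the single lower-indexed position it cancels — again a position $>l_{\sigma_N-1}$. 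Therefore for all $t\ge\sigma_N$ the restriction of $A_t$ to $\omega\colrestr{l_{\sigma_N-1}}$ is constant; in particular $\lim_t A_t(z)$ exists for every $z$, and equals the $A$ of \cref{def:rea-op}.

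It remains to promote this stabilization on ever-longer initial segments to genuine true stages, and this is the delicate part. The obstruction is that $\dom A_s$ always overshoots what the construction has settled by stage $s$, so $A_s$ may over-commit (with a $0$) at a witness a module has just named but not yet enumerated, and hence $\sigma_N$ itself need not be true. The plan is to note that at any stage $s$ only finitely many modules are in play, that only those $\module{R}{j,e}$ with $\pair{j}{e}$ small enough can name a witness inside $\dom A_s$ at all, that a module whose current run begins after $s$ touches only positions $>l_s$ (its witnesses being chosen large), and that each in-play module acts only finitely often; combining these with the explicit bounds on the number of (pre)active stages obtained in \cref{sec:verification:winning}, one produces arbitrarily large stages $s$ lying past the last action of every module that could name a witness below $l_s$. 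At any such $s$ no later enumeration can alter $\dom A_s$, so $A_s\subfun A_t$ for all $t>s$; since $N$ was arbitrary there are infinitely many such $s$.

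Each such true stage $s$ satisfies $A_s\subfun A$ (let $t\to\infty$), so \cref{cond:building-A:infinite-correctness} holds, and totality of $\Gfunc{i}{A\Tplus\REset(A){i}}$ follows exactly as remarked after \cref{def:funcs}, using \cref{cond:building-re}. The main obstacle is the penultimate step — squaring the fact that $A_s$ ``looks past'' the construction's current commitments with the demand that $A_s$ be an initial segment of every later approximation — which is precisely where the ``choose large'' conventions earn their keep.
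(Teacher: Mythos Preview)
Your first two paragraphs are correct and already contain a complete proof; the third paragraph chases a phantom difficulty. Having shown that $A_t$ restricted to $\omega\colrestr{l_{\sigma_N-1}}$ is constant for all $t \geq \sigma_N$, observe that $\dom A_{\sigma_N-1}$ is \emph{exactly} $\omega\colrestr{l_{\sigma_N-1}}$ by \cref{cond:building-A:unique}, and since nothing is enumerated into $\axset$ at the (odd) stage $\sigma_N$ by \cref{cond:building-A:even-stages}, that constant value equals $A_{\sigma_N-1}$ itself. Thus $A_{\sigma_N-1} \subfun A_t$ for every $t > \sigma_N-1$, and you are done. This is precisely the paper's argument: take $s_k$ to be the last stage any module of priority $\leq k$ acts (your $\sigma_N - 1$), note that all lower-priority modules are reinitialized there and thereafter choose every witness larger than $l_{s_k}$, and conclude $A_{s_k} \subfun A_t$ for all $t > s_k$ in two sentences.

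The overshoot worry you raise is legitimate for a \emph{generic} stage $s$ --- if some module has just named $c_0$ but not yet enumerated it, then $A_s(\pair{3}{c_0}) = 0$ may later flip --- but it does not arise at $\sigma_N - 1$, because at that very stage every module of priority $> N$ has just been reinitialized and has not yet named anything at all. Your proposed fix, searching for stages ``lying past the last action of every module that could name a witness below $l_s$,'' is therefore unnecessary; and as written it is also not a complete argument, since the self-reference between $s$ and $l_s$ is never resolved and you never actually exhibit such a stage.
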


Note that, this result entails that \( A \) is \( \Delta^0_2 \).  

\begin{proof}
To verify the primary claim note that if \module{R}{j,e} acts at stage \( s \) it injures lower priority modules and when reinitialized those modules never enumerate an element into any column of \( A \) below \( l_{s+1} \).  Thus, if \( s_{k} \) is last stage that any module with priority \( \leq k \) acts we have \( A_t \supfun A_{s_k} \) for all \( t > s_k \) and \( A_{s_k} \subfun A \).    

\Cref{cond:building-A:infinite-correctness} follows immediately and the totality of \( \Gfunc{i}{A \Tplus \REset(A){i}} \)  was observed to follow from this fact in the discussion immediately following \cref{def:funcs}.

\end{proof}

We can now easily complete the proof of our main theorem.

\mainthm*

\begin{proof}
By \cref{lem:cond-Rje-satisfaction} to show that \( A \) is properly \REA[3] but can't be expanded to a \REA[4] set it is enough to verify that the requirements of the form \req{P}{i} is satisfied.  Clearly \( \Tfunc{Y_i} \) is total for all \( i \) and \( \Gfunc{i}{A \Tplus \REset(A){i}}  \) is total by \cref{lem:infinite-correctness}.  Hence, it's enough to show that these functionals are correct at every stage.   But by \cref{prop:maintain-computations} this holds as long as no module for \req{P}{i} is injured infinitely many times.  However, as only modules of the form \module{R}{j,e} are responsible for injuries this is immediate from \cref{prop:R-finite}.  

The fact that \( A \) can be taken to be \( \Delta^0_2 \) follows from \cref{lem:infinite-correctness}.
\end{proof}

\bibliographystyle{plain}

\end{document}
